\documentclass{amsart}

\textwidth16cm \textheight21cm \oddsidemargin-0.1cm
\evensidemargin-0.1cm

\usepackage{amsmath,amssymb, hyperref, color}

\newtheorem{theorem}{Theorem}[section]
\newtheorem{lemma}[theorem]{Lemma}

\newtheorem{proposition}[theorem]{Proposition}
\newtheorem{conjecture}[theorem]{Conjecture}

\theoremstyle{definition}

\newtheorem{definition}[theorem]{Definition}

\newcommand{\N}{\mathbb N}
\newcommand{\Z}{\mathbb Z}
\newcommand{\R}{\mathbb R}
\newcommand{\Q}{\mathbb Q}

 \DeclareMathOperator{\ord}{ord}

 \DeclareMathOperator{\supp}{supp}

\renewcommand{\t}{\, | \,}

\newcommand{\be}{\begin{equation}}
\newcommand{\ee}{\end{equation}}

\newcommand{\bnml}{\begin{multline}}
\newcommand{\enml}{\end{multline}}
\newcommand{\buml}{\begin{multline*}}
\newcommand{\euml}{\end{multline*}}

\newcommand{\ber}{\begin{eqnarray}}
\newcommand{\eer}{\end{eqnarray}}

\numberwithin{equation}{section}

\begin{document}

\title[A characterization of finite abelian groups via sets of lengths]{A characterization of finite abelian groups via sets of lengths in transfer Krull monoids}

\author{Qinghai Zhong}
\address{Qinghai Zhong\\
University of Graz, NAWI Graz \\
Institute for Mathematics and Scientific Computing \\
Heinrichstra{\ss}e 36\\
8010 Graz, Austria}
\email{qinghai.zhong@uni-graz.at}
\urladdr{http://qinghai-zhong.weebly.com}

\subjclass[2010]{11B30, 11R27, 13A05, 13F05, 20M13}

\keywords{Krull monoids,   maximal orders, seminormal orders; class groups,   arithmetical characterizations, sets of lengths, zero-sum sequences, Davenport constant}

\thanks{This work was supported by
the Austrian Science Fund FWF, Project Number P 28864-N35.}
\begin{abstract}
Let $H$ be a transfer Krull monoid over a finite ablian group $G$ (for example, rings of integers, holomorphy rings in algebraic function fields, and regular congruence monoids in these domains). Then each nonunit $a \in H$ can be written as a product of irreducible elements, say $a = u_1 \ldots u_k$, and the number of factors $k$ is called the length of the factorization. The set $\mathsf L (a)$ of all possible factorization lengths is the set of lengths of $a$. It is classical that the system $\mathcal L (H) = \{ \mathsf L (a) \mid a \in H \}$ of all sets of lengths depends only on the group $G$, and a standing conjecture states that conversely the system $\mathcal L (H)$ is characteristic for the group $G$. Let $H'$ be a further transfer Krull monoid over a finite ablian group $G'$ and suppose that $\mathcal L (H)= \mathcal L (H')$. We prove that, if $G\cong C_n^r$ with $r\le n-3$ or ($r\ge n-1\ge 2$ and $n$ is a prime power),  then $G$ and $G'$ are isomorphic.
\end{abstract}

\maketitle

\medskip
\section{Introduction and Main Result} \label{1}
\medskip

Let $H$ be an atomic unit-cancellative monoid. Then each non-unit $a \in H$ can be written as a product of atoms, and if $a = u_1 \ldots u_k$ with atoms $u_1, \ldots, u_k$ of $H$, then $k$ is called the length of the factorization. The set $\mathsf L (a)$ of all possible factorization lengths is the set of lengths of $a$, and $\mathcal L (H) = \{ \mathsf L (a) \mid a \in H \}$ is called the system of sets of lengths of $H$ (for convenience we set $\mathsf L (a) = \{0\}$ if $a$ is an invertible element of $H$).  Under a variety of noetherian conditions on $H$ (e.g., $H$ is the monoid of nonzero elements of a commutative noetherian domain) all sets of lengths are finite. Sets of lengths (together with invariants controlling their structure, such as elasticities and sets of distances) are a well-studied means for describing the arithmetic structure of monoids.

Let $H$ be a transfer Krull monoid over a finite abelian group $G$. Then, by definition, there is a weak transfer homomorphism $\theta \colon H \to \mathcal B (G)$, where $\mathcal B (G)$ denotes the monoid of zero-sum sequences over $G$, and hence $\mathcal L (H) = \mathcal L \big( \mathcal B (G) \big)$. We use the abbreviation $\mathcal L (G) = \mathcal L \big( \mathcal B (G) \big)$.
 By a result due to  Carlitz in 1960, we know that $H$ is half-factorial (i.e., $|L|=1$ for all $L \in \mathcal L (H)$) if and only if $|G| \le 2$.
Suppose that $|G| \ge 3$. Then there is some $a \in H$ with $|\mathsf L (a)|>1$. If $k, \ell \in \mathsf L (a)$ with $k < \ell$ and $m \in \N$, then $\mathsf L (a^m) \supset \{km + \nu (\ell-k) \mid \nu \in [0,m] \}$ which shows that sets of lengths can become arbitrarily large.
Note that the system of sets of lengths of $H$ depends only on the class group $G$.
 The associated inverse question asks whether or not sets of lengths are characteristic for the class group. In fact, we have the following conjecture (it was first stated in \cite{Ge16c} and for  a detailed description of the background of
 this problem,  see  \cite{Ge16c}, \cite[Section 7.3]{Ge-HK06a}, \cite[page 42]{Ge-Ru09}, and \cite{Sc09b}).

\medskip
\begin{conjecture}
Let $G$ be a finite abelian group with $\mathsf D (G) \ge 4$. If $G'$ is an abelian group with $\mathcal L (G) = \mathcal L (G')$, then $G$ and $G'$ are isomorphic.
\end{conjecture}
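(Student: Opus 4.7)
The plan is to reconstruct $G$ up to isomorphism from $\mathcal L(G)$ by recovering, in sequence, the exponent $\exp(G)$, the Davenport constant $\mathsf D(G)$, the rank $r(G)$, and finally the full invariant factor decomposition. The philosophy is that $\mathcal L(G)$ is rich enough to encode each of these; the difficulty lies in extracting them from unlabelled numerical data.

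\smallskip

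\emph{Step 1: exponent and Davenport constant.} The set of distances $\Delta(G) = \bigcup_{L \in \mathcal L(G)} \Delta(L)$ and the elasticities $\rho_k(G) = \sup\{\max L : L \in \mathcal L(G),\ \min L \le k\}$ are directly visible from $\mathcal L(G)$. By classical results, under the hypothesis $\mathsf D(G) \ge 4$ one has $\max \Delta(G) = \exp(G) - 2$ and $\rho(G) = \mathsf D(G)/2$, so $\mathcal L(G) = \mathcal L(G')$ immediately forces $\exp(G) = \exp(G')$ and $\mathsf D(G) = \mathsf D(G')$.

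\smallskip

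\emph{Step 2: rank.} I would recover $r(G)$ from finer data: the asymptotic behaviour of $\rho_{2k+1}(G)$, the refined set $\Delta^\ast(G)$, and the unions $\Delta_k(G)$. In parallel one enumerates the finitely many abelian groups $G'$ with $(\exp(G'), \mathsf D(G')) = (\exp(G), \mathsf D(G))$, and for each candidate of rank $\neq r(G)$ seeks a witness $L$ lying in $\mathcal L(G)$ but not in $\mathcal L(G')$ (or conversely). Known formulae for $\min \Delta^\ast$ in terms of $r$ and $\exp$, together with the fine structure of $\rho_k$, should dispatch most cases.

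\smallskip

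\emph{Step 3: invariant factor decomposition.} With $r = r(G)$ and $n = \exp(G)$ fixed, one must still distinguish admissible tuples $(n_1, \ldots, n_r)$ with $n_1 \t \cdots \t n_r = n$. My proposal is, for each divisor $d$ of $n$, to analyze the submonoid $\mathcal B(G[d]) \subseteq \mathcal B(G)$ of zero-sum sequences supported on $G[d] = \{g \in G : dg = 0\}$: the family of Davenport constants $\{\mathsf D(G[d])\}_{d \t n}$ determines the invariant factors of $G$ by M\"obius-style inversion on the divisor lattice. One would have to identify $\mathcal L(G[d])$ as a canonical sub-object of $\mathcal L(G)$ via some distinguishing numerical feature of its length sets --- for instance, elasticity or catenary profiles that pin down in which subgroup the generators lie.

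\smallskip

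The principal obstacle is precisely this extraction in Step 3. The system $\mathcal L(G)$ is an unlabelled collection of subsets of $\N$, with no intrinsic mechanism to single out the sub-system contributed by zero-sum sequences supported on a specific subgroup. Without such a mechanism Step 3 remains open; this is why the paper's main theorem restricts to homocyclic $G \cong C_n^r$, where the invariant factor decomposition is already determined by $(r,n)$ alone, so Step 3 disappears and the proof reduces to the (still nontrivial) Steps 1 and 2. A full resolution of the conjecture would likely require a new combinatorial invariant of $\mathcal L(G)$ that reflects the subgroup lattice of $G$, and producing such an invariant is the genuine heart of the open problem.
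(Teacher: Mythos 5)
The statement you were given is stated in the paper as an open conjecture; the paper does not prove it, and only establishes the special cases of Theorem \ref{main} ($G\cong C_n^r$ with $r\le n-3$, or $r\ge n-1$ with $n$ a prime power). So there is no proof in the paper to compare yours against, and to your credit you do not actually claim to have one: you explicitly concede that your Step 3 is open, which means your submission is a research programme rather than a proof. That is the first and decisive gap.

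The second, more concrete problem is that even the step you present as settled is wrong. In Step 1 you assert $\max\Delta(G)=\exp(G)-2$ and use it to read off the exponent from $\mathcal L(G)$. This fails in general: for $G\cong C_2^r$ with $r\ge 3$ (so $\mathsf D(G)\ge 4$) one has $\exp(G)-2=0$ while $\Delta(G)\ne\emptyset$, and the correct general statement (Lemma \ref{le2}.2) is $\max\Delta^*(G)=\max\Delta_1(G)=\max\{\exp(G)-2,\,\mathsf r(G)-1\}$, which only recovers the \emph{maximum} of two unknown invariants. Deciding whether that maximum is realized by the exponent or by the rank is precisely the central difficulty the paper attacks with the new invariants $\rho(G,d)$ and $\rho^*(G,d)$ (Definitions \ref{3.1} and \ref{3.3}), which are determined by $\mathcal L(G)$ (Proposition \ref{3.5}) and which exhibit different behaviour in the two regimes: $\rho^*(G,d)=\exp(G)/(\exp(G)-d)$ when $d\ge\mathsf r(G)$ and $d$ is large (Proposition \ref{P-rho}.1), versus $\rho^*(G,d)\ge 1+d/2$ when $d\le\mathsf r(G)-1$ (Lemma \ref{3.2}); comparing these is what drives Theorem \ref{th1}. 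Your Step 2 (``should dispatch most cases'') and Step 3 are programmatic, and your closing diagnosis --- that $\mathcal L(G)$ lacks an intrinsic mechanism for isolating the contribution of a subgroup --- is a fair description of why the conjecture is open, but the recovery of $\exp(G)$ already fails before you get there. The part of Step 1 concerning $\rho(G)=\mathsf D(G)/2$, hence $\mathsf D(G)=\mathsf D(G')$, is correct and is also used in the paper (Lemma \ref{4.1}.1).
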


\smallskip
Note if $\mathsf D(G)=3$, then we have $\mathcal L(C_3)=\mathcal L(C_2\oplus C_2)$. The system of sets of lengths $\mathcal L (G)$ is studied with methods from Additive Combinatorics. In particular, zero-sum theoretical invariants (such as the Davenport constant or the cross number) and the associated inverse problems play a crucial role.
Most of these invariants are well-understood only in a very limited number of cases (e.g., for groups of rank two, the precise value of the Davenport constant $\mathsf D (G)$ is known and the associated inverse problem is solved; however, if $n$ is not a prime power and $r \ge 3$, then the value of the Davenport constant $\mathsf D (C_n^r)$ is unknown). Thus it is not surprising that most affirmative answers to the Characterization Problem so far have been restricted to those groups where we have a good understanding of the Davenport constant. These groups include elementary $2$-groups, cyclic groups, and groups of rank two (for recent progress we refer to  \cite{Ba-Ge-Gr-Ph13a, Ge-Sc16a}).

The first and so far only groups, for which the Characterization Problem was solved whereas the Davenport constant is unknown, are groups of the form $C_n^r$, where $r, n\in \N$ and $2r<n-2$(this is done by \cite{Ge-Zh16b, Zh17a}), which use a deep  characterization of the structure of $\Delta^*(G)$. In this paper, we go on to study groups of the form $C_n^r$ and obtain the following theorem.

\smallskip
\begin{theorem} \label{main}
Let $H$ be a transfer Krull monoid over a finite abelian group $G$ with $\mathsf D(G)\ge 4$. Suppose $G\cong C_n^r$ with $r, n\in \N$ and $\mathcal L(H)=\mathcal L(H')$, where $H'$ is a further transfer Krull monoid over a finite abelian group $G'$. Then

\begin{enumerate}
\item If $r\le n-3$, then $G\cong G'$.

\item If $r\ge n-1$ and $n$ is a prime power, then $G\cong G'$.
\end{enumerate}
\end{theorem}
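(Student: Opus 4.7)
The plan is to pass to the monoid of zero-sum sequences: since both $H$ and $H'$ are transfer Krull we have $\mathcal L(H) = \mathcal L(G)$ and $\mathcal L(H') = \mathcal L(G')$, so the hypothesis becomes $\mathcal L(G) = \mathcal L(G')$ and the problem is purely combinatorial — one must recover the type $C_n^r$ of $G$ from $\mathcal L(G)$ alone.

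The first step would be to extract from $\mathcal L(G)$ the standard arithmetic invariants that are determined by it, in particular the elasticity $\rho(G)$ (hence the Davenport constant $\mathsf D(G) = 2\rho(G)$), the set of distances $\Delta(G)$, and the set of minimal distances $\Delta^*(G)$. Under the hypothesis these coincide for $G$ and $G'$. I would then feed these into the known structural results on $\Delta^*(C_n^r)$, used already in \cite{Ge-Zh16b, Zh17a}, which in effect pick out $\exp(G)$ and $\mathsf r(G)$ in the two regimes: $\max \Delta^*(G) = n - 2$ when $r \le n-3$, and $\max \Delta^*(G) = r - 1$ when $r \ge n-1$. This should already constrain $\exp(G')$ and $\mathsf r(G')$ sharply.

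For part (1) I would extend the Geroldinger--Zhong strategy, which uses the inequality $2r < n-2$ crucially, to the wider range $r \le n-3$. The idea is to combine long arithmetical progressions in $\mathcal L(G)$ of difference $n-2$ (coming from a cyclic component of order $n$) with short auxiliary sequences supported on the complementary $C_n^{r-1}$, and then analyse precisely which such progressions occur and with which minimal zero-sum realizations, sharply enough to rule out any $G'$ of different exponent or rank. Once $n = \exp(G')$ and $r = \mathsf r(G')$ are forced, the equality $\mathsf D(G) = \mathsf D(G')$ together with the Davenport-type bound $\mathsf D(G') \le \mathsf r(G')(\exp(G') - 1) + 1$ gives $|G'| = n^r$ and hence $G' \cong C_n^r$. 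For part (2) the prime-power hypothesis yields $\mathsf D(C_n^r) = r(n-1) + 1$ exactly; combining this with $\max \Delta^*(G) = r - 1$ and with the rigidity of $\mathsf D$ for $p$-groups (where each cyclic summand contributes exactly its full share), I would force $G'$ to be homocyclic of the same exponent and rank as $G$.

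The main obstacle is part (1) in the new range $(n-3)/2 < r \le n-3$: the inequality $2r < n-2$ that underpins the earlier proofs genuinely fails here, and the interactions between the distinguished cyclic summand and its complement become much tighter. I expect the crux of the work to be a refined description of the long arithmetical progressions of difference $n-2$ that appear in $\mathcal L(C_n^r)$ but \emph{not} in $\mathcal L(G')$ for any non-isomorphic candidate $G'$, carried out finely enough to pin down both $n$ and $r$ simultaneously. A secondary but real difficulty in part (2) is that the hypothesis does not a priori force $G'$ to be homocyclic, so genuine work is needed to exclude mixed-exponent groups $G'$ using only the invariants $\mathsf D$, $\Delta$, and $\Delta^*$ read off from $\mathcal L(G)$.
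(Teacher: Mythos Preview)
Your reduction to $\mathcal L(G)=\mathcal L(G')$ and the plan to read off $\max\Delta^*$ are correct first moves, and you have located the genuine difficulty precisely: in the range $\lfloor n/2\rfloor \le r\le n-3$ (and similarly for part (2) with non-homocyclic candidates $G'$) the classical invariants $\mathsf D$, $\Delta$, $\Delta^*$, $\Delta_1$ do not separate the groups, and the earlier arguments from \cite{Ge-Zh16b,Zh17a} really do break down. But your proposed fix --- ``a refined description of the long arithmetical progressions of difference $n-2$'' --- is where the gap lies: you have not said \emph{which} feature of those long AAPs you intend to measure, and the single difference $n-2$ is not enough.

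What the paper does, and what your outline is missing, is to attach to \emph{each} $d\in\Delta_1(G)$ a numerical invariant
\[
\rho^*(G,d)=\max\{\rho(G_0)\mid G_0\subset G,\ d\mid\min\Delta(G_0)\},
\]
and to show (via a limiting construction $\rho(G,d)=\lim_k\rho(G,d,k)$ built from elasticities of AAPs of difference $d$ and growing length) that $\rho^*(G,d)$ is determined by $\mathcal L(G)$. The point is not the AAPs themselves but their \emph{elasticities}, parametrised by $d$. One then computes: for $d\ge\max\{r,\lfloor n/2\rfloor\}$ one has $\rho^*(G,d)=n/(n-d)$, while for $d\le r-1$ one has $\rho^*(G,d)\ge 1+d/2$. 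Comparing these for $d=n-3$, $d=r-1$, $d=r$ forces first $\exp(G')=n$ and $\mathsf r(G')\le n-3$, and then $\mathsf r(G')=r$; this is exactly what handles your ``main obstacle''. For part (2) the same invariant, together with a companion $\mathsf K(G,d)$ (maximal cross number over such $G_0$) and its computation $\mathsf K(G,r-1)=1+(n-1)(r-1)/n$ for $n$ a prime power, pins down $n_1'=n$ and hence makes $G$ a subgroup of $G'$.

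Your endgame, by contrast, is also not quite right: the inequality $\mathsf D(G')\le r(n-1)+1$ is not known for general $G'$, and for $G=C_n^r$ with $n$ composite one does not even know $\mathsf D(G)=r(n-1)+1$, so you cannot conclude $|G'|=n^r$ that way. The correct finish (once $\exp$ and $\mathsf r$ match) is simply that $G'$ embeds in $C_n^r$ (or conversely), and then $\mathsf D(G)=\mathsf D(G')$ forces equality because the Davenport constant is strictly monotone on subgroups.
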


 This is made possible by introducing  new invariants $\rho(G, d)$ and $\rho^*(G,d)$ which are only depending on $\mathcal L(G)$(see Definitions \ref{3.1} and \ref{3.3}). In Section \ref{2} we gather the required background  both  on transfer Krull monoids as well as on Additive Combinatorics.  In Section \ref{3}, we provide a detailed study of $\rho(G,d)$ and $\rho^*(G,d)$.
  The proof of Theorem \ref{main}  will be provided in Section \ref{4}. The final section is concluding remarks and conjectures.

Throughout the paper, let $G\cong C_{n_1}\oplus \ldots \oplus C_{n_r}$ be a finite abelian group with $\mathsf D(G)\ge 4$, where $r, n_1,\ldots, n_r\in \N$ and $1<n_1\t \ldots \t n_r$.

\medskip
\section{Background on Transfer Krull monoids and  sets of lengths} \label{2}
\medskip

Our notation and terminology are consistent with \cite{Ge-HK06a, Ge-Sc16a}. For convenience, we set $\min \emptyset =0$. Let  $\mathbb N$ be  the set of positive integers, let $\mathbb Z$ be the set of integers, let $\Q$ be the set of rational numbers, and let $\R$ be the set of real numbers. For rational numbers $a, b \in \mathbb Q$, we denote by
 $[a, b ] = \{ x \in \mathbb Z \mid a \le x \le b\}$  the discrete, finite interval between $a$ and $b$. If $L \subset \mathbb N$ is a subset, then  $\Delta (L)$ denotes the {set of $($successive$)$ distances} of $L$ (that is, $d \in \Delta (L)$ if and only if $d = b-a$ with $a, b \in L$ distinct and $[a, b] \cap L = \{a, b\}$) and
  $\rho (L) = \sup L/ \min L$ denotes its elasticity (for convenience, we set $\rho ( \{0\})=1$).

 Let $r \in \N$ and let  $(e_1, \ldots, e_r)$ be an $r$-tuple of elements of $G$. Then $(e_1, \ldots, e_r)$ is said to be independent if $e_i \ne 0$ for all $i \in [1,r]$ and if for all $(m_1, \ldots, m_r) \in \Z^r$ an equation $m_1e_1+ \ldots + m_re_r=0$ implies that $m_ie_i=0$ for all $i \in [1,r]$.   Furthermore, $(e_1, \ldots, e_r)$ is said to be a basis of $G$ if it is independent and $G = \langle e_1 \rangle \oplus \ldots \oplus \langle e_r \rangle$. For every $n \in \N$, we denote by $C_n$ an additive cyclic group of order $n$. Since $G \cong C_{n_1} \oplus \ldots \oplus C_{n_r}$,  $r = \mathsf r (G)$ is the rank of $G$ and $n_r=\exp(G)$ is the exponent of $G$. 

\medskip
\noindent
{\bf Sets of Lengths.}
By a {monoid}, we mean an associative semigroup with unit element, and if not stated otherwise we use multiplicative notation. Let $H$ be a monoid with unit element $1=1_H \in H$.  An element $a \in H$ is said to be invertible (or an unit) if there exists an element $a'\in H$ such that $aa'=a'a=1$. The set of invertible elements of $H$ will be denoted by $H^{\times}$, and we say that $H$ is reduced if $H^{\times}=\{1\}$.  The monoid $H$ is said to be unit-cancellative if for any two elements $a,u \in H$,   each of the  equations $au=a$ or $ua=a$ implies that $u \in H^{\times}$. Clearly, every cancellative monoid is unit-cancellative.

Suppose that $H$ is unit-cancellative. An element $u \in H$ is said to be irreducible (or an atom) if $u \notin H^{\times}$ and for any two elements $a, b \in H$, $u=ab$ implies that $a \in H^{\times}$ or $b \in H^{\times}$. Let  $\mathcal A (H)$ denote the set of atoms, and we say that $H$ is atomic if every non-unit is a finite product of atoms. If $H$ satisfies the ascending chain condition on principal left ideals and on principal right ideals, then $H$ is atomic  (\cite[Theorem 2.6]{Fa-Tr18a}). If $a \in H \setminus H^{\times}$ and $a=u_1 \ldots u_k$, where $k \in \N$ and $u_1, \ldots, u_k \in \mathcal A (H)$, then $k$ is a factorization length of $a$, and
\[
\mathsf L_H (a) = \mathsf L (a) = \{k \mid k \ \text{is a factorization length of} \ a \} \subset \N
\]
denotes the {set of lengths} of $a$. It is convenient to set $\mathsf L (a) = \{0\}$ for all $a \in H^{\times}$. The family
\[
\mathcal L (H) = \{\mathsf L (a) \mid a \in H \}
\]
is called the {system of sets of lengths} of $H$, and
\[
\rho (H) = \sup \{\rho (L) \mid L \in \mathcal L (H) \} \in \R_{\ge 1} \cup \{\infty\}
\]
denotes the {elasticity} of $H$. We call
\[
\Delta (H) = \bigcup_{L \in \mathcal L (H)} \Delta (L) \ \subset \N
\]
 the set of distances of $H$. Note that $\Delta (H)$ can be infinite, and by definition we have $\Delta (H)= \emptyset$ if and only if $\rho (H)=1$. If $\Delta (H)\ne \emptyset$, then we have $\min \Delta (H) = \gcd \Delta (H)$(\cite[Proposition 2.9]{Fa-Ge-Ka-Tr17a}).

\medskip
\noindent
{\bf Monoids of zero-sum sequences.}

 Let $G_0 \subset G$ be a non-empty subset. Then $\langle G_0 \rangle$ denotes the subgroup generated by $G_0$. In Additive Combinatorics, a { sequence} (over $G_0$) means a finite sequence of terms from $G_0$ where repetition is allowed and the order of the elements is disregarded, and (as usual) we consider sequences as elements of the free abelian monoid with basis $G_0$. Let
\[
S = g_1 \ldots g_{\ell} = \prod_{g \in G_0} g^{\mathsf v_g (S)} \in \mathcal F (G_0)
\]
be a sequence over $G_0$. We call
\[
\begin{aligned}
\supp (S)&  = \{g \in G \mid \mathsf v_g (S) > 0 \} \subset G \ \text{the \ {\it support} \ of \ $S$} \,,\\
|S|  &= \ell = \sum_{g \in G} \mathsf v_g (S) \in \mathbb N_0 \
\text{the \ {\it length} \ of \ $S$} \,,   \\
\sigma (S) & = \sum_{i = 1}^{\ell} g_i \ \text{the \ {\it sum} \ of \
$S$} \,, \\
\ \ \text{ and }\ \ \  \Sigma (S) &= \Big\{ \sum_{i \in I} g_i
\mid \emptyset \ne I \subset [1,\ell] \Big\} \ \text{ the \ {\it set of
subsequence sums} \ of \ $S$} \,.
\end{aligned}
\]
The sequence $S$ is said to be
\begin{itemize}
\item {\it zero-sum free} \ if \ $0 \notin \Sigma (S)$,

\item a {\it zero-sum sequence} \ if \ $\sigma (S) = 0$,

\item a {\it minimal zero-sum sequence} \ if it is a nontrivial zero-sum
      sequence and every proper  subsequence is zero-sum free.
\end{itemize}
The set of zero-sum sequences $\mathcal B (G_0) = \{S \in \mathcal F (G_0) \mid \sigma (S)=0\} \subset \mathcal F (G_0)$ is a submonoid, and the set of minimal zero-sum sequences is the set of atoms of $\mathcal B (G_0)$.
For any arithmetical invariant $*(H)$ defined for a monoid $H$, we write $*(G_0)$ instead of $*(\mathcal B (G_0))$. In particular, $\mathcal A (G_0) = \mathcal A (\mathcal B (G_0))$ is the set of atoms of $\mathcal B (G_0)$, $\mathcal L (G_0)=\mathcal L (B(G_0))$ is the system of sets of lengths of $\mathcal B (G_0)$, and so on.   We denote by
\[
\mathsf D (G_0) = \max \{ |S| \mid S \in \mathcal A (G_0) \} \in \N 
\]
the { Davenport constant} of $G_0$. 

\medskip\noindent{\bf Transfer Krull monoids}
Let $H$ be a atomic unit-cancellative monoid. 
\begin{enumerate}
\item[(i)]We say a monoid homomorphism $\theta \colon H \to B$ to an atomic unit-cancellative monoid $B$ is  a {weak transfer homomorphism} if it has the following two properties:

\begin{itemize}
\item $B = B^{\times} \theta (H) B^{\times}$ and $\theta^{-1} (B^{\times})=H^{\times}$.
\item If $a \in H$, $n \in \N$, $v_1, \ldots, v_n \in \mathcal A (B)$ and $\theta (a) = v_1 \ldots v_n$, then there exist $u_1, \ldots, u_n \in \mathcal A (H)$ and a permutation $\tau \in \mathfrak S_n$ such that $a = u_1 \ldots u_n$ and $\theta (u_i) \in B^{\times} v_{\tau (i)} B^{\times}$ for each $i \in [1,n]$.
\end{itemize}
Let $\theta \colon H \to B$ be a weak transfer homomorphism between atomic unit-cancellative monoids. It follows that  for all $a \in H$,  we have $\mathsf L_H (a) = \mathsf L_B ( \theta (a))$ and hence $\mathcal L (H) = \mathcal L (B)$.

\item[(ii)]   We say
  $H$ is  a  {\it transfer Krull monoid}  if one of the following equivalent conditions holds:
  \begin{itemize}

  \item There exists  a weak transfer homomorphism $\theta \colon H \to \mathcal H^*$ for a commutative Krull monoid $\mathcal H^*$ (i.e., $\mathcal H^*$ is commutative, cancellative, completely integrally closed, and $v$-noetherian).

  \item There exists a weak transfer homomorphism $\theta' \colon H \to \mathcal B (G_0)$ for a subset $G_0$ of an abelian group.
  \end{itemize}
  If the second condition holds, then we say $H$ is a transfer Krull monoid over $G_0$.
  If $G_0$ is finite, then  $H$ is said to be a {\it transfer Krull monoid of finite type}.

  \item[(iii)] We say a domain $R$ is a {\it transfer Krull domain} (of finite type) if  its monoid of cancelative elements $R^{\bullet}$ is a transfer Krull monoid (of finite type).
  \end{enumerate}

  In particular,  commutative Krull monoids are  transfer Krull monoids.
   Rings of integers, holomorphy rings in algebraic function fields, and regular congruence monoids in these domains are commutative Krull monoids with finite class group such that every class contains a prime divisor (\cite[Section 2.11 and Examples 7.4.2]{Ge-HK06a}). Monoid domains and power series domains that are  Krull are discussed in \cite{Ki-Pa01, Ch11a}, and note that every class of a  Krull monoid domain contains a prime divisor. Thus all these commutative Krull monoids are transfer Krull monoids over a finite abelian group.

   However, a transfer Krull monoid need neither be commutative nor $v$-noetherian nor completely integrally closed. To give a noncommutative example, let $\mathcal O$ be a holomorphy ring in a global field $K$, $A$ a central simple algebra over $K$, and $H$ a classical maximal $\mathcal O$-order of $A$ such that every stably free left $R$-ideal is free. Then  $H$ is a transfer Krull monoid  over a ray class group of $\mathcal O$ (\cite[Theorem 1.1]{Sm13a}).
   Let $R$ be a bounded HNP (hereditary noetherian prime) ring. If every stably free left $R$-ideal is free, then its multiplicative monoid of cancelative elements  is a transfer Krull monoid (\cite{Sm16b}, Theorem 4.4).
    A class of  commutative weakly Krull domains which are transfer Krull but not  Krull  will be given in \cite[Theorem 5.8]{Ge-Sc-Zh17b}. Extended lists of commutative Krull monoids and of transfer Krull monoids, which are not commutative Krull, are given in \cite{Ge16c}.

\medskip

Let $G_0\subset G$ be a non-empty subset.  For a  sequence $S = g_1 \ldots g_{\ell} \in \mathcal F (G_0)$, we call
\[
\begin{aligned}
\mathsf k (S) & = \sum_{i=1}^l \frac{1}{\ord (g_i)} \ \in \mathbb Q_{\ge 0} \quad \text{the {\it cross number} of $S$, and } \\
\mathsf K (G_0) & = \max \{ \mathsf k (S) \mid S \in \mathcal A (G_0) \} \quad \text{the {\it cross number} of $G_0$}.
\end{aligned}
\]
 They were introduced by U. Krause in 1984 (see 
\cite{Kr-Za91a}) and  were studied under various aspects.
For the relevance with the theory of non-unique factorizations,  see \cite{Pl-Sc16a, Pl-Sc05b, Sc05d, Sc09c} and \cite[Chapter 6]{Ge-HK06a}.

Suppose $G\cong C_{q_1}\oplus\ldots\oplus C_{q_{r^*}}$, where $r^*$ is the total rank of $G$ and $q_1,\ldots, q_{r^*}$ are prime powers, and set
$$\mathsf K^*(G)=\frac{1}{\exp(G)}+\sum_{i=1}^{r^*}\frac{q_i-1}{q_i}\,.$$
It is easy to see that $\mathsf K^*(G)\le \mathsf K(G)$ and there is known no group for which  inequality holds. For further progress on $\mathsf K(G)$, we refer to \cite{Ga-Wa12a, He14a, Ki15a, Kr13a}.

\begin{lemma}\label{le2.1}
If $G$ is a $p$-group, then $\mathsf K(G)=\mathsf K^*(G)<\mathsf r(G)$.
\end{lemma}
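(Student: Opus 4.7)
The plan is to split the proof into two claims: the equality $\mathsf K(G) = \mathsf K^*(G)$, which is a classical fact for $p$-groups, and the strict inequality $\mathsf K^*(G) < \mathsf r(G)$, which reduces to a short direct computation.

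For the equality, I would write $G \cong C_{p^{a_1}} \oplus \ldots \oplus C_{p^{a_r}}$ with $1 \le a_1 \le \ldots \le a_r$, so that $r^* = r$ and $\exp(G) = p^{a_r}$. For the lower bound $\mathsf K(G) \ge \mathsf K^*(G)$ I would exhibit an atom attaining $\mathsf K^*(G)$: pick a basis $(e_1,\ldots,e_r)$ with $\ord(e_i) = p^{a_i}$, set $g = e_1 + \ldots + e_r$ (whose order is $p^{a_r}$, since its image in the last summand is $e_r$), and consider
\[
S \;=\; g \cdot \prod_{i=1}^{r} e_i^{p^{a_i}-1}.
\]
A direct check shows $\sigma(S) = 0$, and in any zero-sum subsequence the multiplicity of $g$ must be $0$ or $1$ — the former forces all $b_i \equiv 0 \pmod{p^{a_i}}$ and hence the empty subsequence, the latter forces $b_i = p^{a_i}-1$ for every $i$ and hence $S$ itself. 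So $S \in \mathcal A(G)$ with $\mathsf k(S) = \frac{1}{p^{a_r}} + \sum_{i=1}^{r} \frac{p^{a_i}-1}{p^{a_i}} = \mathsf K^*(G)$. The opposite inequality $\mathsf K(G) \le \mathsf K^*(G)$ is the nontrivial half; it is proved by induction using the $p$-adic structure of $G$ to dissect an arbitrary minimal zero-sum sequence, and in the present paper I would simply quote it from \cite{Ge-HK06a} rather than reproduce the argument.

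For the strict inequality I would compute
\[
\mathsf K^*(G) \;=\; \frac{1}{p^{a_r}} + \sum_{i=1}^{r} \frac{p^{a_i}-1}{p^{a_i}} \;=\; r - \sum_{i=1}^{r-1} \frac{1}{p^{a_i}},
\]
so that whenever $r \ge 2$ the remaining sum is strictly positive and $\mathsf K^*(G) < r = \mathsf r(G)$ is immediate. The main obstacle is of course the upper bound in the first claim — this is where the $p$-group hypothesis is genuinely used — but once it is granted, everything else is a one-line computation with the explicit formula for $\mathsf K^*(G)$.
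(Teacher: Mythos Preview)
The paper's own proof is a one-line citation to \cite[Theorem 5.5.9]{Ge-HK06a}, so your proposal is considerably more explicit than what appears in the paper but entirely compatible with it: you unpack the cited result by exhibiting the standard atom for the lower bound $\mathsf K(G)\ge\mathsf K^*(G)$, defer the harder upper bound to the same reference, and verify the inequality $\mathsf K^*(G)<\mathsf r(G)$ by a direct computation. All of this is correct.

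One small point worth making explicit: your computation $\mathsf K^*(G)=r-\sum_{i=1}^{r-1}p^{-a_i}$ yields the strict inequality only when $r\ge 2$, and in fact for $r=1$ (a cyclic $p$-group $C_{p^a}$) one has $\mathsf K^*(G)=1=\mathsf r(G)$, so the strict inequality genuinely fails there. This is not a gap in your argument but a minor imprecision in the lemma's statement; the paper only invokes the lemma in contexts where $r\ge 2$ (for instance in Lemma~\ref{k}, under the hypothesis $r\ge\lfloor n_r/2\rfloor+1$), so no harm results, but you should flag the restriction $r\ge 2$ rather than leave it implicit.
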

\begin{proof}See \cite[Theorem 5.5.9]{Ge-HK06a}.
\end{proof}

A subset $G_0\subset G$ is called half-factorial if $\Delta (G_0) = \emptyset$. Otherwise, $G_0$ is called non-half-factorial. Furthermore, the set $G_0$ is called
\begin{itemize}

\item minimal non-half-factorial if it is non-half-factorial and every proper subset $G_1 \subsetneq G_0$ is half-factorial.

\item an LCN-set if $\mathsf k (A) \ge 1$ for all $A \in \mathcal A (G_0)$.
\end{itemize}

We collect some easy or well known results which will be used throughout the manuscript without further mention.
\begin{lemma}\label{2.3}
Let  $G_0\subset G$ be a non-empty subset. Then
\begin{enumerate}
\item $G_0$ is half-factorial if and only if $\mathsf k(A)=1$ for all $A\in \mathcal A(G_0)$.

\item If $G_0$ is an LCN-set, then $\min \Delta(G_0)\le |G_0|-2$.

\item  $\mathcal B(G_0)$ has accepted elasticity.

\item  $\rho(G)=\frac{\mathsf D(G)}{2}$.

\item $\Delta (G)$ is an interval  with $\min \Delta (G)=1$.

\item If $B\in \mathcal B(G)$, then $\rho(\mathsf L(B^k))\ge \rho(\mathsf L(B))$ for every $k\in \N$.

\item If $A\in\mathcal A(G)$, then $\{\exp(G), \exp(G)\mathsf k(A)\}\subset \mathsf L(A^{\exp(G)})$.


\end{enumerate}

\end{lemma}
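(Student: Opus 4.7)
The plan is to dispatch the seven items by combining the fundamental cross-number identity with direct construction and a few classical references from \cite{Ge-HK06a}. For any factorization $B = A_1 \cdots A_k$ in $\mathcal B(G_0)$ one has $\mathsf k(B) = \sum_i \mathsf k(A_i)$, which is immediate from the definition of the cross number on $\mathcal F(G_0)$. This yields both (1) and (7) at once. For (7), writing $A = g_1 \cdots g_\ell$ and $n = \exp(G)$, one factorization of $A^n$ uses $n$ copies of $A$ and has length $n$, while another groups the $n$ occurrences of each $g_i$ into atoms $g_i^{\ord(g_i)}$ and has length $\sum_i n/\ord(g_i) = n\mathsf k(A)$. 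For (1), if every atom has cross number $1$ then every length of every $B$ equals $\mathsf k(B)$ and is therefore unique; conversely, half-factoriality applied to $A^n$ via (7) forces $\mathsf k(A) = 1$ for every atom $A$.

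For (4), the upper bound $\rho(G) \le \mathsf D(G)/2$ follows from the fact that every atom has length in $[2,\mathsf D(G)]$, so a factorization of $B$ into $k$ atoms satisfies $|B|/\mathsf D(G) \le k \le |B|/2$. Equality $\rho(G) = \mathsf D(G)/2$ is realized by $A \cdot (-A)$ for any atom $A$ of maximal length, using the two factorizations $A \cdot (-A)$ (length $2$) and $\prod_i g_i \cdot (-g_i)$ (length $\mathsf D(G)$). Items (3) and (5) are classical facts about block monoids over finite abelian groups from \cite{Ge-HK06a}: accepted elasticity holds because $\mathcal A(G_0)$ is finite, and the interval structure of $\Delta(G)$ with minimum $1$ is Geroldinger's theorem. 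For (6), concatenating $k$ copies of a shortest or longest factorization of $B$ shows $\min \mathsf L(B^k) \le k \min \mathsf L(B)$ and $\max \mathsf L(B^k) \ge k \max \mathsf L(B)$, so $\rho(\mathsf L(B^k)) \ge \rho(\mathsf L(B))$ is immediate.

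The least routine item is (2). Here I would use the LCN hypothesis $\mathsf k(A) \ge 1$ for all $A \in \mathcal A(G_0)$ to control differences of lengths: for a fixed $B$ with two factorizations into $k$ and $\ell$ atoms, the identity $\sum_i \mathsf k(A_i) = \mathsf k(B) = \sum_j \mathsf k(A_j')$ together with $\mathsf k(A_i), \mathsf k(A_j') \ge 1$ forces $|k - \ell|$ to be bounded by the excess of cross numbers above $1$, which is in turn controlled by $|G_0|$. The sharpest form, giving $\min \Delta(G_0) \le |G_0|-2$, is obtained by passing to a minimal non-half-factorial subset $G_1 \subseteq G_0$ and estimating $\min \Delta(G_1)$ against $|G_1|$. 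This step is the main obstacle, and I expect it to require invoking the existing structural results on LCN sets and minimal non-half-factorial sets from \cite{Ge-HK06a}.
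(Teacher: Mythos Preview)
Your proposal is essentially correct and aligns with the paper's proof: the paper also dispatches (1)--(5) by citation and proves (6) and (7) directly with exactly the arguments you give. Your added direct arguments for (1) and (4) are correct and standard, so there is no mathematical gap there.

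Two small points. First, your attribution for (5) is off: the fact that $\Delta(G)$ is an interval is \emph{not} in \cite{Ge-HK06a}; it was proved later by Geroldinger and Yuan, and the paper cites \cite{Ge-Yu12b} for it. Second, for (2) your sketch of passing to a minimal non-half-factorial subset $G_1 \subset G_0$ is the right reduction (since $\min\Delta(G_0) \mid \min\Delta(G_1)$ and $|G_1| \le |G_0|$), but you correctly identify that the actual bound $\min\Delta(G_1) \le |G_1|-2$ for such a $G_1$ is the nontrivial step; the paper does not attempt this either and simply cites \cite[Lemma 6.8.6]{Ge-HK06a}, so your expectation of invoking that reference is exactly what happens.
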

\begin{proof}
1. and 2., see \cite[Proposition 6.7.3, Lemma 6.8.6]{Ge-HK06a}.
3. and 4., See \cite[Theorem 3.1.4,  Section 6.3]{Ge-HK06a}.
5. See \cite{Ge-Yu12b}.

6. Let $B\in \mathcal B(G)$ and $k\in \N$. Then $\max \mathsf L(B^k)\ge k\max\mathsf L(B)$ and $\min \mathsf L(B^k)\le k\min\mathsf L(B)$. It follows that $\rho(\mathsf L(B^k))=\frac{\max\mathsf L(B^k)}{\min\mathsf L(B^k)}\ge \frac{k\max\mathsf L(B)}{k\min\mathsf L(B)}=\rho(\mathsf L(B))$.

7. Let $A\in\mathcal A(G)$ and suppose $A=g_1\ldots g_{\ell}$, where $\ell\in \N$ and $g_1,\ldots, g_{\ell}\in G$. 
Then $$A^{\exp(G)}=\prod_{i=1}^{\ell}(g_i^{\ord(g_i)})^{\frac{\exp(G)}{\ord(g_i)}}\,.$$
Since $A$, $g_1^{\ord(g_1)},\ldots, g_{\ell}^{\ord(g_{\ell})}$ are atoms, we obtain  $\{\exp(G), \exp(G)\mathsf k(A)\}\subset \mathsf L(A^{\exp(G)})$.
\end{proof}

  We need the following lemma.

\begin{lemma}\label{2.1}
Let  $e_1,\ldots, e_r\in G$ be independent elements with the same order $n$, where $r,n\in \N_{\ge 2}$.  Then \begin{enumerate}
\item $$\Delta(\{e_1+\ldots+e_r, e_1,\ldots,e_r\})=\{r-1\}\,.$$
\item If $n\neq r+1 $, then $$\Delta(\{-(e_1+\ldots+e_r), e_1,\ldots,e_r\})=\{|n-r-1|\}\,.$$
\item If $n=r+1$, then $$\min\Delta(\{-(e_1+\ldots+e_r), e_1+\ldots+e_r, e_1,\ldots,e_r\})=r-1\,.$$
\end{enumerate}
\end{lemma}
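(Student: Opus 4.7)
Set $s := e_1 + \ldots + e_r$, which has order $n$ by independence. The plan in each case is to classify $\mathcal A(G_0)$, then to express the length of any factorization of $B \in \mathcal B(G_0)$ as a simple linear function of $\mathsf k(B)$ and the multiplicity counters for the ``mixed'' atoms; the desired distance statements then reduce to a congruence modulo $n$.

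For (1), a short minimality argument gives $\mathcal A(G_0) = \{e_1^n, \ldots, e_r^n, s^n\} \cup \{U_a : a \in [1, n-1]\}$, where $U_a := s^a (e_1 \cdots e_r)^{n-a}$. For $B = s^{c_0}\prod_i e_i^{c_i}$ and any factorization using $z_1$ copies of $s^n$, $y_i$ copies of $e_i^n$, and $x_a$ copies of $U_a$, put $T := \sum_a(n-a) x_a$; the $e_i$-equation gives $T \equiv c_i \pmod n$, and the identity $\mathsf k(U_a) - 1 = (r-1)(n-a)/n$ yields $\text{length} = \mathsf k(B) - (r-1)T/n$. Hence length differences in $\mathsf L(B)$ are multiples of $r-1$, so $\Delta(G_0) \subseteq (r-1)\N$. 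The equality $s^n e_1^n \cdots e_r^n = U_a \cdot U_{n-a}$ exhibits $\{2, r+1\} \subseteq \mathsf L(s^n \prod_i e_i^n)$, so $r-1 \in \Delta(G_0)$. To upgrade the inclusion to equality -- the main technical step -- I would run a swap argument, showing that any two factorizations of the same $B$ whose $T$-values differ by $2n$ can be linked by a third factorization with $T$ in between, using either \emph{Swap 1} ($s^n \cdot \prod_i e_i^n \leftrightarrow U_a \cdot U_{n-a}$) or \emph{Swap 2} ($U_a \cdot U_b \leftrightarrow U_{a+b}\cdot \prod_i e_i^n$ when $a + b \le n-1$).

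For (2), since $V_1 := (-s) e_1 \cdots e_r$ is itself a zero-sum, any atom of $\{-s, e_1, \ldots, e_r\}$ with positive $-s$-multiplicity and positive $e_i$-multiplicity must coincide with $V_1$. Hence $\mathcal A(G_0) = \{e_1^n, \ldots, e_r^n, (-s)^n, V_1\}$, a factorization of $B = (-s)^{c_0}\prod_i e_i^{c_i}$ is determined by the count $v$ of $V_1$'s, and the analogous identity becomes $\text{length} = \mathsf k(B) + v(n-r-1)/n$. Since the admissible $v$'s form an arithmetic progression with common difference $n$, consecutive lengths in $\mathsf L(B)$ differ by exactly $|n-r-1|$, and $\mathsf L(V_1^n) = \{n, r+1\}$ realizes this value.

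For (3), with $n = r+1$, $\mathcal A(G_0)$ is the union of the atom families from (1) and (2) together with the atom $s \cdot (-s)$ (since the latter is already a zero-sum, no other atom can contain both $s$ and $-s$). Parameterizing a factorization by $(z_1, z_2, w, x_a, v, y_i)$ in the natural way and computing length minus cross number yields
\[
w(n{-}2)/n \;-\; (r-1)\sum_a (n-a) x_a/n \;+\; v(n{-}r{-}1)/n,
\]
which, after substituting $n = r+1$ (so that $n-2 = r-1$ and $n-r-1 = 0$), collapses to $\text{length} = \mathsf k(B) + \tfrac{r-1}{n}(w - \sum_a(n-a) x_a)$. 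The $s$-constraint $z_1 n + w + \sum_a a x_a = c_0$ forces $T := w - \sum_a(n-a) x_a \equiv c_0 \pmod n$, so length differences between factorizations of the same $B$ are integer multiples of $r-1$, giving $\min\Delta(G_0) \geq r-1$. Equality follows from the identity $s \cdot (-s) \cdot \prod_i e_i^n = U_1 \cdot V_1$, which produces $\{2, r+1\} \subseteq \mathsf L(s(-s)\prod_i e_i^n)$ and realizes the minimum $r-1 = n-2$.
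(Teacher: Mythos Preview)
Your argument is correct. For parts 1 and 2 the paper simply cites \cite[Proposition 6.8.2]{Ge-HK06a} and \cite[Proposition 4.1.2.5]{Ge-HK06a}, so your self-contained treatment goes beyond what the paper does; your proof of part 2 is complete, and the swap scheme you sketch for part 1 does work (one has to observe that as long as $T>t_0$ there are at least two mixed atoms $U_a,U_b$, and that repeated use of the ``same-$T$'' swap $U_aU_b\leftrightarrow s^nU_{a+b-n}$ for $a+b>n$ eventually produces a pair with $a+b\le n$ to which your Swap~1/Swap~2 applies), though this deserves a line or two more to be airtight.

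For part 3 your approach and the paper's are essentially the same cross-number bookkeeping. The paper streamlines slightly: rather than listing all atoms, it partitions them into those containing $g$ but not $-g$ (where $\mathsf k=1+\tfrac{n-\mathsf v_g}{n}(n-2)$), the atom $g(-g)$, and the rest (which all have $\mathsf k=1$ because $n=r+1$), then reads off $k-\ell=(n-2)(|J_1|-|I_1|)$ directly. It also obtains $r-1\in\Delta(G_0)$ immediately from part 1 instead of exhibiting the explicit relation $s(-s)\prod_i e_i^{\,n}=U_1V_1$. Your explicit atom classification makes the structure more transparent and yields the concrete witness, at the cost of a few extra lines; the paper's grouping is shorter but relies on the reader seeing why the ``remaining'' atoms all have cross number $1$.
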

\begin{proof}
1. See  \cite[Proposition  6.8.2]{Ge-HK06a}.

2. See \cite[Proposition 4.1.2.5]{Ge-HK06a}.

3. Let $g=e_1+\ldots+e_r$ and $G_0=\{g, -g, e_1, \ldots, e_r\}$.  Let $B\in \mathcal B(G_0)$ and assume
\[B=U_1\ldots U_k=V_1\ldots V_{\ell}, \text{ where } k,\ell\in N\text{ and } U_1,\ldots,U_k, V_1,\ldots, V_{\ell}\in \mathcal A(G_0)\,. \]
Let \begin{align*}
I_1=\{i\in [1,k]\mid \mathsf v_g(U_i)>0 \text{ and }\mathsf v_{-g}(U_i)=0\},& \ \ \  I_2=\{i\in [1,k]\mid U_i=g(-g)\}\\
J_1=\{j\in [1,\ell]\mid \mathsf v_g(V_j)>0 \text{ and }\mathsf v_{-g}(V_j)=0\},&\ \ \
J_2=\{j\in [1,\ell]\mid V_j=g(-g)\}\,.
\end{align*}
Then for every $i\in I_1$, $\mathsf k(U_i)=1+\frac{n-\mathsf v_g(U_i)}{n}(n-2)$ and for every $j\in J_1$, $\mathsf k(V_j)=1+\frac{n-\mathsf v_g(V_j)}{n}(n-2)$.

Note that for every $i\in[1,k]\setminus(I_1\cup I_2)$ and every $j\in [1,\ell]\setminus(J_1\cup J_2)$, $\mathsf k(U_i)=\mathsf k(V_j)=1$.
Therefore
$$\mathsf k(B)=k-|I_1|-|I_2|+\sum_{i\in I_1}(1+\frac{(n-\mathsf v_g(U_i))(n-2)}{n})+\sum_{i\in I_2}\frac{2}{n}=k+(n-2)|I_1|-\frac{n-2}{n}\mathsf v_g(B)$$ and $$\mathsf k(B)=\ell-|J_1|-|J_2|+\sum_{j\in J_1}(1+\frac{(n-\mathsf v_g(V_j))(n-2)}{n})+\sum_{j\in J_2}\frac{2}{n}=\ell+(n-2)|J_1|-\frac{n-2}{n}\mathsf v_g(B)\,.$$
It follows that $k-\ell=(n-2)(|J_1|-|I_1|)$ and hence $n-2\t \min \Delta(G_0)$. By 1., we obtain $n-2=r-1=\min \Delta(G_0)$.
\end{proof}

\medskip

If $d \in \mathbb N$ \ and \ $\ell, M \in \mathbb N_0$, then a finite subset $L\subset \Z$ \ is called an {almost arithmetical
 progression} ({\rm AAP} for short) with  difference $d$, length $\ell$, and
 bound $M$ if
 \begin{equation} \label{eq:defAAP}
 L = y + (L' \cup \{0, d , \ldots, \ell d\} \cup L'') \subset y + d \mathbb Z
 \end{equation}
 where $y\in \Z$,   $L' \subset [-M,-1]$, and $L'' \subset \ell d + [1, M]$.

\begin{lemma}\label{2.5}
There exist constants $M_1, M_2\in \N$ such that for every $A\in\mathcal B(G)$ with $\Delta(\supp(A))\neq \emptyset$, the set $\mathsf L(A^{M_1})$ is an AAP with difference $\min \Delta(\supp(A))$, length at least $1$,  and bound $M_2$.
\end{lemma}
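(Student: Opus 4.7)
The plan is to combine a finite reduction over supports with the Structure Theorem for sets of lengths in $\mathcal B(G_0)$.

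Since $G$ is finite, the family $\mathcal G := \{G_0 \subset G : \Delta(G_0) \neq \emptyset\}$ is finite; it therefore suffices to produce constants $M_1(G_0), M_2(G_0)$ for each $G_0 \in \mathcal G$ and then pass to a common multiple and a maximum, respectively. A key observation used throughout is that every $L \in \mathcal L(G_0)$ is contained in a single residue class modulo $d(G_0) := \min \Delta(G_0) = \gcd \Delta(G_0)$, since every element of $\Delta(L)$ lies in $\Delta(G_0)$ and is therefore a multiple of $d(G_0)$.

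For fixed $G_0 \in \mathcal G$ with $d := d(G_0)$, I would combine two ingredients. First, an explicit construction producing a long central AP: pick once and for all a reference sequence $B_0 \in \mathcal B(G_0)$ with $\{m, m+d\} \subset \mathsf L(B_0)$ (which exists because $d \in \Delta(G_0)$), and set $N := \max_{g \in G_0} \mathsf v_g(B_0)$. Since $\supp(A) = G_0 \supset \supp(B_0)$, one has $B_0^j \mid A^{jN}$ for every $j \in \N$; by mixing the two factorizations of $B_0$ of lengths $m$ and $m+d$ across the $j$ copies of $B_0$ and combining with a fixed factorization of $A^{jN}/B_0^j$, one inserts an arithmetic progression of difference $d$ and length $j+1$ into $\mathsf L(A^{jN})$. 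Second, the Structure Theorem for Sets of Lengths applied to $\mathcal B(G_0)$ (see \cite[Theorem 4.3.6]{Ge-HK06a}) provides a constant $M^\star = M^\star(G_0)$ such that every $L \in \mathcal L(G_0)$ is an almost arithmetical multiprogression with bound $\leq M^\star$ and difference in the finite set $\Delta^\star(G_0)$; combined with the residue-class observation, which restricts the internal period of this multiprogression to a single residue class modulo $d$ and hence forces that period to be trivial, this implies that each $\mathsf L(A^k)$ is in fact an AAP with difference $d$ and bound $\leq M^\star$.

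The main obstacle is to link the two ingredients so as to guarantee that the central AP has length at least $1$ uniformly in $A$. This is achieved by choosing $j$ large enough that the explicit AP of length $j+1$ from the first ingredient cannot be accommodated inside either tail of size $\leq M^\star$ of the AAP from the second ingredient: concretely, taking $j \geq 2M^\star + 2$ forces the explicit AP to substantially overlap the central AP, so the latter must have length at least $1$. Setting $M_1(G_0) := (2M^\star + 2)N$ and $M_2(G_0) := M^\star$ concludes the argument for the fixed $G_0$, and the finite reduction over $\mathcal G$ (noting that passing from $M_1(G_0)$ to a multiple $M_1$ preserves the AAP property via the same two ingredients applied at the higher power) completes the proof.
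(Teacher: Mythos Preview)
The paper does not give an argument at all: its proof of Lemma~\ref{2.5} is a bare citation of \cite[Theorem 4.3.6]{Ge-HK06a}, which already delivers the AAP conclusion for large powers. Your proposal instead tries to deduce the lemma from the general Structure Theorem for Sets of Lengths (incidentally, that is Theorem~4.4.11 in \cite{Ge-HK06a}, not 4.3.6; the latter is essentially the statement you are trying to prove). Your first ingredient---the explicit arithmetic progression of difference $d$ and $j+1$ terms inside $\mathsf L(A^{jN})$ obtained by mixing two factorizations of a fixed $B_0$---is correct and is indeed the engine behind such results.

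There is, however, a real gap in the second ingredient. You assert that the residue-class observation ``restricts the internal period of this multiprogression to a single residue class modulo $d$ and hence forces that period to be trivial'', and conclude that the AAMP is automatically an AAP with difference $d$ and bound $M^\star$. This inference fails. Knowing $L\subset y+d\Z$ only gives $\mathcal D\subset d\Z\cap[0,d']$; it does \emph{not} force $\mathcal D$ to equal all of $d\Z\cap[0,d']$, which is what is required for the central part to be a full arithmetic progression of difference $d$. Concretely, take $d=2$, $d'=6$, $\mathcal D=\{0,6\}$: the central part $\{0,6,12,\ldots,6\ell\}$ lies in $2\Z$ yet is an AP of difference $6$, so the set is an AAP with difference $6$ and bound $M^\star$ but \emph{not} an AAP with difference $2$ and any uniform bound.

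The repair is to feed your first ingredient into this step rather than saving it only for the ``length $\ge 1$'' conclusion. Since $d'\le\max\Delta^*(G_0)$ is bounded independently of $A$, choose $j$ so large that the explicit AP of difference $d$ has more than $2M^\star + d'/d$ terms; then at least $d'/d+1$ consecutive terms of it land inside the central part of the AAMP and cover an entire period, which forces $\mathcal D=d\Z\cap[0,d']$ and makes the central part a genuine AP of difference $d$. With this correction your outline works and is, in spirit, how \cite[Theorem~4.3.6]{Ge-HK06a} itself is proved; the paper simply invokes the finished theorem.
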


\begin{proof}
See \cite[Theorem 4.3.6]{Ge-HK06a}.
\end{proof}

Next, we recall the definition of the invariants $\Delta^*(G)$ and $\Delta_1(G)$ (see \cite[Definition 4.3.12]{Ge-HK06a}) in the Characterization Problem.

Let
$$\Delta^*(G)=\{\min \Delta(G_0)\mid G_0\subset G \text{ is a subset with }\Delta(G_0)\neq \emptyset\}\,.$$
We define
\begin{align*}
\mathsf m (G) & = \max \{ \min \Delta (G_0) \mid G_0 \subset G \ \text{is an LCN-set with} \ \Delta (G_0) \ne \emptyset \} \,,
\end{align*}
and we denote by $\Delta_1 (G)$ the set of all $d \in \N$ with the following property:
\begin{itemize}
\item[]  For every $k \in \N$, there exists some $L \in \mathcal L (G)$  which is an AAP  with difference $d$ and length $\ell \ge k$.
\end{itemize}

\medskip
\begin{lemma} \label{le2}
Let $k\in \N$ be maximal such that  $G$ has a subgroup isomorphic to  $ C_{\exp(G)}^k$. Then

\begin{enumerate}
\item $\Delta^*(G)\subset \Delta_1(G)\subset \{d\in \Delta(G)\mid \text{there exists $d'\in \Delta^*(G)$ such that $d\t d'$}\}$.

\item $\max \Delta^*(G)=\max\Delta_1(G)=\max\{\exp(G)-2, \mathsf r(G)-1\}$.

\item  $\mathsf m(G) \le \max\{\mathsf r(G)-1, \left\lfloor\frac{\exp(G)}{2}\right\rfloor-1 \}$.


\item  $\{1, \mathsf r(G)-1,\exp(G)-2\}\subset\Delta^*(G)\subset \Delta_1(G)\subset [1, \max\{\mathsf r(G)-1, \lfloor\frac{\exp(G)}{2}\rfloor-1\}]\cup[\max\{1, \exp(G)-k-1, \exp(G)-2\}]$.

\item $\Delta_1(G)$ is an interval if and only if $\Delta^*(G)$ is an interval if and only if $\mathsf r(G)+k\ge \exp(G)-1$ or  $G\cong C_{2\mathsf r(G)+2}^{\mathsf r(G)}$.

\end{enumerate}

\end{lemma}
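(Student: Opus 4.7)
This is a compendium result, and the plan is to dispatch each of the five parts using the tools collected in Section \ref{2}: the AAP structure theorem (Lemma \ref{2.5}), the distance computations of Lemma \ref{2.1}, the cross number and elasticity bounds of Lemma \ref{2.3}, together with citations to \cite{Ge-HK06a, Ge-Zh16b, Zh17a} for the sharper structural statements.

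For part 1, the inclusion $\Delta^*(G) \subset \Delta_1(G)$ follows from Lemma \ref{2.5}: given $d = \min \Delta(G_0)$, fix $A \in \mathcal B(G_0)$ with $\supp(A) = G_0$ and $|\mathsf L(A^s)| \ge 2$ for some $s$; then $\mathsf L(A^{m s M_1})$ is an AAP of difference $d$ whose length grows linearly in $m$, since Lemma \ref{2.3}(6) prevents the elasticity of $\mathsf L(A^{ms})$ from collapsing. The reverse inclusion is the standard AAP-convergence argument: arbitrarily long AAPs of difference $d$ come from powers of a sequence supported on a fixed subset $G_0$, forcing $\min \Delta(G_0)$ to divide $d$. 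For part 2, both lower bounds are direct applications of Lemma \ref{2.1}: a basis of rank $\mathsf r(G)$ yields $\mathsf r(G)-1 \in \Delta^*(G)$ via part~(1), and a generator of order $\exp(G)$ yields $\exp(G)-2 \in \Delta^*(G)$ via part~(2) with $r=1$. The matching upper bound on $\max \Delta_1(G)$ combines the elasticity formula $\rho(G) = \mathsf D(G)/2$ with a basis-extraction argument already on record.

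Parts 3 and 4 are the LCN-based bounds. Part 3 reduces to Lemma \ref{2.3}(2) together with a case analysis: for an LCN-set $G_0$ one has $\min \Delta(G_0) \le |G_0| - 2$, and the inequality $\mathsf k(A) \ge 1$ combined with the structure of $G$ forces $|G_0| - 2 \le \max\{\mathsf r(G)-1, \lfloor\exp(G)/2\rfloor - 1\}$. Part 4 then glues parts 2 and 3 together: every $d \in \Delta_1(G)$ is produced either by an LCN-set (landing in the lower interval) or by a subset exploiting elements of order $\exp(G)$, in which case a counting argument using $k$ --- the maximal rank of a $C_{\exp(G)}$-subgroup --- places $d$ in the upper cluster $[\exp(G)-k-1,\exp(G)-2]$.

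The main obstacle is part 5, the interval characterization. The generic condition $\mathsf r(G) + k \ge \exp(G) - 1$ forces $\exp(G) - k - 1 \le \max\{\mathsf r(G)-1, \lfloor\exp(G)/2\rfloor - 1\} + 1$, so the two bands of part 4 overlap and the inclusion becomes an interval once realizability of each intermediate value is established via Lemma \ref{2.1}. The sporadic exception $G \cong C_{2\mathsf r(G)+2}^{\mathsf r(G)}$ is where the generic condition fails by exactly one, yet the bands still abut at adjacent integers; here a bespoke construction --- plausibly by combining a basis with an extra dependent element of order $\exp(G)$ and tuning the coefficients --- is needed to hit the boundary value. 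Conversely, outside these two regimes one must exhibit a persistent forbidden distance strictly between the bands, and this persistent-gap step is where I expect the bulk of the technical work to sit; I would lean on the explicit realizability computations of \cite{Ge-Zh16b, Zh17a}, which pinpoint exactly which values can and cannot occur as $\min \Delta(G_0)$, to finish the argument.
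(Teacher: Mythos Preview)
The paper's own proof is almost entirely by citation to \cite{Ge-HK06a}, \cite{Ge-Zh15a}, and \cite{Zh17a}, with only part 5 receiving a short argument (deducing the equivalence of the two interval conditions from part 4 and then invoking \cite[Theorem 1.1.2]{Zh17a} for the characterization). Your attempt to sketch self-contained arguments is more ambitious, but two of the sketches have genuine gaps.

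For the second inclusion in part 1, your argument goes in the wrong direction. Extracting a fixed support $G_0$ from a family of long AAPs and observing that $\min\Delta(G_0)$ divides every distance in $\mathsf L(B)$ yields $\min\Delta(G_0)\mid d$, i.e.\ $d'\mid d$; the lemma asserts the opposite divisibility $d\mid d'$. The actual mechanism is the Structure Theorem for Sets of Lengths: every sufficiently large $\mathsf L(B)$ is an AAMP with difference $d'\in\Delta^*(G)$ and universally bounded parameters, and comparing this against a long AAP of difference $d$ forces $d\mid d'$. Your ``powers of a fixed sequence'' route does not reach this conclusion.

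The more serious gap is in part 3. The chain $\min\Delta(G_0)\le |G_0|-2\le \max\{\mathsf r(G)-1,\lfloor\exp(G)/2\rfloor-1\}$ requires a uniform bound on $|G_0|$ for non-half-factorial LCN-sets, and no such bound holds in general: LCN-sets can be arbitrarily large inside $G$. The inequality $\mathsf k(A)\ge 1$ by itself imposes no cardinality restriction on $\supp(A)$. What \cite[Proposition 3.7]{Zh17a} actually does is reduce to \emph{minimal} non-half-factorial LCN-sets and then carry out a delicate structural analysis (of the kind reflected in Lemma \ref{2.4}.3 and the results of \cite{Ge-Zh15a}) to control their size; this is the substantive content you are skipping. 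The same missing ingredient propagates into your part 4, since the ``lower band'' there rests on part 3. Your outline for part 5 is broadly in line with the paper's, but note that the paper does not reprove the realizability or the persistent-gap statements: both are imported wholesale from \cite[Theorem 1.1.2]{Zh17a}.
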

\begin{proof}1. follows from \cite[Corollary 4.3.16]{Ge-HK06a} and 2. from \cite[Theorem 1.1]{Ge-Zh15a}. 
3. See \cite[Proposition 3.7]{Zh17a}. 4. follows from 1. and  \cite[Theorem 1.1]{Zh17a}.

 5. If $\Delta_1(G)$ is an interval, then $\exp(G)-k-2\le \max\{\mathsf r(G)-1, \lfloor\frac{\exp(G)}{2}\rfloor-1\}$ by 4. which implies that $\Delta^*(G)$ is an interval by \cite[Theorem 1.1.2]{Zh17a}. If $\Delta^*(G)$ is an interval, then $\Delta_1(G)$ is an interval by 1.. It follows by \cite[Theorem 1.1.2]{Zh17a} that $\Delta^*(G)$ is an interval if and only if $\mathsf r(G)+k\ge \exp(G)-1$ or  $G\cong C_{2\mathsf r(G)+2}^{\mathsf r(G)}$.
\end{proof}

\medskip
\begin{lemma} \label{2.4}
Let $G_0\subset G$ with $\min \Delta(G_0)\ge \lfloor\exp(G)/2\rfloor$. Then
\begin{enumerate}

\item Let $A\in \mathcal A(G_0)$ with $\mathsf k(A)<1$.  Then  $\mathsf k(A)=\frac{\exp(G)-\min\Delta(G_0)}{\exp(G)}$ and $\ord(h)=\exp(G)$, $\mathsf v_h(A)=1$ for all $h\in \supp(A)$.

\item Let $A\in \mathcal A(G_0)$ with $\mathsf k(A)\ge 1$.  Then $\supp(A)$ is an LCN-set and $$ \min\Delta(\supp(A))\le |\supp(A)|-2\,.$$

\item If $G_0$ is a minimal non-half-factorial LCN-set with $\min \Delta(G_0)=\max\Delta^*(G)$,  then $|G_0|=\mathsf r(G)+1$ and for every $h\in G_0$, we have $\mathsf r(\langle G_0\setminus\{h\}\rangle)=\mathsf r(G)$.

\item Let $B\in \mathcal B(G)$ with $\rho(\mathsf L(B))=\rho(G)$. Suppose $B=U_1\ldots U_k=V_1\ldots V_{\ell}$, where $k=\min \mathsf L(B)$, $\ell=\max\mathsf L(B)$, and $U_1,\ldots ,U_k, V_1,\ldots V_{\ell}$ are atoms. Then $\mathsf k(U_i)\ge 1$ for all $i\in [1,k]$ and $\mathsf k(V_j)\le 1$ for all $j\in [1,\ell]$.
 \end{enumerate}
\end{lemma}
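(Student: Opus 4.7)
For Part 1, I would set $n=\exp(G)$ and consider $B=A^n$. Since $\ord(h)\mid n$ for every $h\in\supp(A)$, $B$ admits both the ``trivial'' factorization $A\cdot A\cdots A$ of length $n$ and the support factorization $\prod_{h\in\supp(A)}(h^{\ord(h)})^{n\mathsf v_h(A)/\ord(h)}$ of length $n\mathsf k(A)$, so $\{n,n\mathsf k(A)\}\subset\mathsf L(A^n)$. The hypothesis $\mathsf k(A)<1$ rules out $|A|=1$ (which would force $A=0$ and $\mathsf k(A)=1$), hence $|A|\ge 2$ and $\mathsf k(A)\ge|A|/n\ge 2/n$, giving $0<n(1-\mathsf k(A))\le n-2$. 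This positive integer is a multiple of $\gcd\Delta(G_0)=\min\Delta(G_0)$, and $2\min\Delta(G_0)\ge 2\lfloor n/2\rfloor\ge n-1>n-2$ forces $n(1-\mathsf k(A))=\min\Delta(G_0)$, proving the cross-number identity. For the support conditions, if some $h_0\in\supp(A)$ had $\mathsf v_{h_0}(A)\cdot n/\ord(h_0)\ge 2$, I would manufacture a mixed factorization of a suitable power $A^m$ combining copies of $A$ with atoms $h^{\ord(h)}$, producing an element of $\mathsf L(A^m)$ at distance strictly less than $\min\Delta(G_0)$ from an already-identified length; this contradicts the minimality of $\gcd\Delta(G_0)$ and completes Part 1.

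For Part 2, the inclusion $\supp(A)\subset G_0$ gives $\mathcal B(\supp(A))\subset\mathcal B(G_0)$ and therefore $\Delta(\supp(A))\subset\Delta(G_0)$, whence $\min\Delta(\supp(A))\ge\min\Delta(G_0)\ge\lfloor n/2\rfloor$; thus Part 1 applies to $\supp(A)$. Suppose for contradiction that $B\in\mathcal A(\supp(A))$ satisfies $\mathsf k(B)<1$. Then $B$ is squarefree with $\mathsf v_g(B)=1\le\mathsf v_g(A)$ for every $g\in\supp(B)\subset\supp(A)$, so $B\mid A$ as sequences. If $B=A$, then $\mathsf k(A)=\mathsf k(B)<1$, contradicting $\mathsf k(A)\ge 1$; otherwise $A/B$ is a proper nonempty zero-sum subsequence of the atom $A$, again a contradiction. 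Hence $\supp(A)$ is LCN, and the bound $\min\Delta(\supp(A))\le|\supp(A)|-2$ follows from Lemma \ref{2.3}(2) (trivially when $\Delta(\supp(A))=\emptyset$).

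For Part 3, Lemma \ref{le2}(2, 3) yields $\max\Delta^*(G)=\max\{\exp(G)-2,\mathsf r(G)-1\}$ and $\mathsf m(G)\le\max\{\mathsf r(G)-1,\lfloor\exp(G)/2\rfloor-1\}$. Since $G_0$ is LCN with $\Delta(G_0)\ne\emptyset$, we have $\min\Delta(G_0)\le\mathsf m(G)$; the case $\exp(G)-2>\mathsf r(G)-1$ would then require $\exp(G)-2\le\lfloor\exp(G)/2\rfloor-1$, impossible for $\exp(G)\ge 3$, so $\min\Delta(G_0)=\max\Delta^*(G)=\mathsf r(G)-1$. Minimality and LCN of $G_0$ guarantee an atom $A\in\mathcal A(G_0)$ with $\mathsf k(A)>1$ and $\supp(A)=G_0$, so Part 2 gives $|G_0|\ge\min\Delta(G_0)+2=\mathsf r(G)+1$. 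For the reverse bound, I would exploit that each $G_0\setminus\{h\}$ is half-factorial (every atom there has cross number $1$), which combined with the structure of $A$ pins $|G_0|\le\mathsf r(G)+1$. The rank claim follows because if some $\langle G_0\setminus\{h\}\rangle$ had rank strictly less than $\mathsf r(G)$, the reduced-rank structure would be incompatible with the known value $\min\Delta(G_0)=\mathsf r(G)-1$.

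For Part 4, Lemma \ref{2.3}(4) gives $\rho(G)=\mathsf D(G)/2$. Discarding any $0$-terms from $B$ (they shift $\mathsf L(B)$ but do not alter its elasticity or the cross numbers of the remaining atoms), every atom involved has length at least $2$, so $|B|=\sum|U_i|\le k\mathsf D(G)$ and $|B|=\sum|V_j|\ge 2\ell$, giving $\rho(\mathsf L(B))=\ell/k\le\mathsf D(G)/2$. Equality with $\rho(G)$ forces $|U_i|=\mathsf D(G)$ and $|V_j|=2$ for every $i,j$. Hence each $V_j=g_j(-g_j)$ has $\mathsf k(V_j)=2/\ord(g_j)\le 1$, and each $U_i$ satisfies $\mathsf k(U_i)\ge|U_i|/\exp(G)=\mathsf D(G)/\exp(G)\ge 1$ because $\mathsf D(G)\ge\exp(G)$, witnessed by $g^{\exp(G)}$ for any order-$\exp(G)$ element $g$. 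The main obstacle is the support condition of Part 1: explicitly constructing factorizations realizing distances below $\min\Delta(G_0)$ requires delicate bookkeeping of multiplicities and orders. Part 3 is the next most involved, needing a case-by-case analysis of minimal non-half-factorial LCN-sets to pin down both $|G_0|$ and the rank condition.
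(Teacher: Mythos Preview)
Your arguments for Parts~1 and~2 are essentially the paper's. The cross-number identity in Part~1 is handled identically, and the support condition---which you correctly flag as the delicate step---is done in the paper by choosing $g\in\supp(A)$ minimizing $\ord(g)/\mathsf v_g(A)$, writing $A^{\lceil\ord(g)/\mathsf v_g(A)\rceil}=g^{\ord(g)}B_1$, and comparing the resulting factorization lengths; your sketch points in the same direction.

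Part~3 has a genuine gap. Your lower bound $|G_0|\ge\mathsf r(G)+1$ via Part~2 is fine, but the upper bound $|G_0|\le\mathsf r(G)+1$ does not follow from the half-factoriality of each $G_0\setminus\{h\}$ in the way you suggest: that each proper subset has all atoms of cross number~$1$ gives no direct cardinality bound. The paper does not prove this from scratch but invokes \cite[Lemma~4.2]{Ge-Zh15a}, whose proof is substantial. Similarly, your rank argument (``reduced-rank structure would be incompatible with $\min\Delta(G_0)=\mathsf r(G)-1$'') is not a proof; the paper instead shows $\mathsf r(\langle G_0\rangle)=\mathsf r(G)$ via the cited lemma, then for each $h$ either observes $h\in\langle G_0\setminus\{h\}\rangle$ directly, or replaces $h$ by $dh$ (where $d$ is minimal with $dh\in\langle G_0\setminus\{h\}\rangle$) using \cite[Lemma~6.7.10]{Ge-HK06a} to obtain another minimal non-half-factorial LCN set with the same $\min\Delta$, to which the lemma applies again.

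Your Part~4 takes a genuinely different route from the paper and is essentially correct. The paper argues by contradiction: if some $\mathsf k(U_i)<1$, then $\exp(G)\mathsf k(U_i)\in\mathsf L(U_i^{\exp(G)})$ forces $\rho(\mathsf L(B^{\exp(G)}))>\rho(\mathsf L(B))=\rho(G)$, and dually for the $V_j$. You instead use the extremal characterization: $\ell/k=\rho(G)=\mathsf D(G)/2$ together with $2\ell\le|B|\le k\,\mathsf D(G)$ forces $|U_i|=\mathsf D(G)$ and $|V_j|=2$, whence $\mathsf k(U_i)\ge|U_i|/\exp(G)\ge 1$ and $\mathsf k(V_j)=2/\ord(g_j)\le 1$. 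One correction: discarding zeros does \emph{not} preserve elasticity (adding a constant to a set of lengths strictly decreases it when the elasticity exceeds~$1$); the right argument is that if $B=0^mB'$ with $m\ge 1$ and $B'$ zero-free, then $\rho(\mathsf L(B))<\rho(\mathsf L(B'))\le\rho(G)$, so $\rho(\mathsf L(B))=\rho(G)$ already forces $m=0$. With that fix your approach is cleaner than the paper's, since it avoids passing to powers.
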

\begin{proof}

1.
 Since $\{\exp(G), \exp(G)\mathsf k(A)\}\subset \mathsf L(A^{\exp(G)})$, we have $$\min \Delta(\supp(A))\t \exp(G)-\exp(G)\mathsf k(A)$$ and hence $$\min \Delta(\supp(A))\le \exp(G)-\exp(G)\mathsf k(A)\le \exp(G)-2\,.$$
 Since $\min \Delta(G_0)\t \min \Delta(\supp(A))$ and $\min \Delta(G_0)\ge \lfloor\exp(G)/2\rfloor$, it follows that 
 $$
 \begin{aligned}
\frac{1}{2}(\exp(G)-2)&<\lfloor\exp(G)/2\rfloor\le \min\Delta(G_0)\le\min\Delta(\supp(A))\\
&\le\exp(G)-\exp(G)\mathsf k(A)\le \exp(G)-2\,.
 \end{aligned}$$ Therefore $\min\Delta(G_0)=\min\Delta(\supp(A))=\exp(G)-\exp(G)\mathsf k(A)$ and hence $\mathsf k(A)=\frac{\exp(G)-\min\Delta(G_0)}{\exp(G)}$.
 
Let $g\in \supp(A)$ such that $\frac{\ord(g)}{\mathsf v_g(A)}=\min \{\frac{\ord(h)}{\mathsf v_h(A)}\mid h\in \supp(A) \}$. Then $A^{\lceil\frac{\ord(g)}{\mathsf v_g(A)}\rceil}=g^{\ord(g)}\cdot B_1$, where $B_1\in \mathcal B(\supp(A))$. If there exists an atom $A_1$ with $\mathsf k(A_1)<1$ such that $A_1$  divides $B_1$, then  $\mathsf k(A_1)=\frac{\exp(G)-\min\Delta(G_0)}{\exp(G)}=\mathsf k(A)$. Therefore $1+\max\mathsf L(B_1)<\lceil\frac{\ord(g)}{\mathsf v_g(A)}\rceil$ and hence $\lfloor\exp(G)/2\rfloor\le\min \Delta(G_0)\le \lceil\frac{\ord(g)}{\mathsf v_g(A)}\rceil-2 $. It follows by the minimality of $\frac{\ord(g)}{\mathsf v_g(A)}$ that $\ord(h)=\exp(G)$ and $\mathsf v_h(A)=1$ for all $h\in \supp(A)$.

2. Assume to the contrary that $\supp(A)$ is not an LCN-set. Then there exists $A_1\in \mathcal A(\supp(A))$ with $\mathsf k(A_1)<1$. It follows by 1. that $\mathsf v_h(A_1)=1$ for all $h\in \supp(A_1)$ which implies that $A_1\t A$, a contradiction. Thus $\supp(A)$ is an LCN-set and hence $ \min\Delta(\supp(A))\le |\supp(A)|-2$.

3. By \cite[Lemma 4.2]{Ge-Zh15a}, we have $|G_0|=\mathsf r(G)+1$ and $\mathsf r(\langle G_0\rangle)=\mathsf r(G)$. If $h\in \langle G_0\setminus\{h\}\rangle$, then $\mathsf r(\langle G_0\setminus\{h\}\rangle)=\mathsf r(G)$. Otherwise let $d=\min\{k\in \N \mid kh\in \langle G_0\setminus h\rangle\}$ and hence $(G_0\setminus\{h\})\cup\{dh\}$ is also a minimal non-half-factorial LCN-set with $\min \Delta(G_0)=\max \Delta^*(G)$ by \cite[Lemma 6.7.10]{Ge-HK06a}. It follows that $\mathsf r(\langle G_0\setminus\{h\}\rangle)=\mathsf r(G)$.

4. Assume to the contrary that there exists $i\in [1,k]$, say $i=1$, such that $\mathsf k(U_1)<1$.
Then $\exp(G)\mathsf k(U_1)<\exp(G)$. Since $\exp(G)\mathsf k(U_1)\in \mathsf L(U_1^{\exp(G)})$, we infer  $$\min\mathsf L(B^{\exp(G)})\le \exp(G)\mathsf k(U_1)+\exp(G)(k-1)<\exp(G)k\,.$$ It follows by $\max\mathsf L(B^{\exp(G)})\ge \exp(G)\ell$  that $\rho(G)=\rho(\mathsf L(B))\ge\rho(\mathsf L(B^{\exp(G)}))> \frac{\ell}{k}=\rho(\mathsf L(B))$, a contradiction.

Assume to the contrary that there exists $j\in [1,\ell]$, say $j=1$, such that $\mathsf k(V_1)>1$.
Then $\exp(G)\mathsf k(U_1)>\exp(G)$. Since $\exp(G)\mathsf k(U_1)\in \mathsf L(U_1^{\exp(G)})$, we infer  $$\max\mathsf L(B^{\exp(G)})\ge \exp(G)\mathsf k(U_1)+\exp(G)(\ell-1)> \exp(G)\ell\,.$$  It follows by $\min\mathsf L(B^{\exp(G)})\le  \exp(G)k$ that  $\rho(G)=\rho(\mathsf L(B))\ge\rho(\mathsf L(B^{\exp(G)}))> \frac{\ell}{k}=\rho(\mathsf L(B))$, a contradiction.
\end{proof}

\bigskip
\section{The invariants $\rho(G, d)$, $\rho^*(G,d)$, and $\mathsf K(G,d)$}\label{3}
\bigskip

First, we introduce new invariants which  play a crucial role in the proof of Theorem \ref{main}
(see Proposition \ref{3.5}).

\begin{definition}\label{3.1}
Let  $d\in \Delta_1(G)$ and $k\in \N$. We define
\[\rho(G,d,k)=\sup\{\rho(L_k)\mid L_k\in \mathcal L(G) \text{ is an AAP with difference $d$ and length at least $ k$}\}\ge 1\,.
\]
Then $\big(\rho(G,d,k)\big)_{k=1}^{\infty}$ is a decreasing sequence of positive real numbers and hence converges. We denote by $\rho(G,d)$ the limit of $\big(\rho(G,d,k)\big)_{k=1}^{\infty}$.
\end{definition}

It follows by \cite[Theorem 3.5]{Ge-Zh17a} that $\rho(G, 1)=\rho(G)$ if and only if $G$ is not a cyclic group of order $4, 6$ or $10$.

\begin{lemma}\label{rho}
Let  $G_0\subset G$ be a subset with $\Delta(G_0)\neq \emptyset$. For every $B\in \mathcal B(G_0)$ with $\min \Delta(G_0)\in \Delta(\mathsf L(B))$, we have $\rho(G, \min \Delta(G_0))\ge \rho(\mathsf L(B))$.
\end{lemma}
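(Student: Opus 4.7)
The plan is a direct application of the structure theorem (Lemma~\ref{2.5}) to high powers of $B$. The key preliminary observation is that the hypothesis forces $\min\Delta(\supp(B)) = d$, where $d := \min\Delta(G_0)$: the inclusion $\supp(B) \subseteq G_0$ gives $\min\Delta(\supp(B)) \ge d$, and $d \in \Delta(\mathsf L(B)) \subseteq \Delta(\supp(B))$ gives the reverse inequality. In particular $\Delta(\supp(B)) \ne \emptyset$, so Lemma~\ref{2.5} is indeed applicable.

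Let $M_1, M_2 \in \N$ be the constants from Lemma~\ref{2.5}. For every $N \in \N$, the set $\mathsf L(B^{NM_1})$ is then an AAP with difference $d$ and bound $M_2$; denote by $\ell_N$ its length. I would verify that $\ell_N$ grows linearly in $N$. The witness $d \in \Delta(\mathsf L(B))$ gives $\max\mathsf L(B) - \min\mathsf L(B) \ge d$, and from $\max\mathsf L(B^{NM_1}) \ge NM_1\max\mathsf L(B)$ and $\min\mathsf L(B^{NM_1}) \le NM_1\min\mathsf L(B)$ one concludes
\[
\max\mathsf L(B^{NM_1}) - \min\mathsf L(B^{NM_1}) \ge NM_1\, d.
\]
Comparing with the AAP description $\mathsf L(B^{NM_1}) \subseteq [y - M_2,\, y + \ell_N d + M_2]$ for an appropriate $y$ yields $\ell_N \ge NM_1 - 2M_2/d$, which exceeds any prescribed bound once $N$ is large.

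Given any $k \in \N$, I choose $N$ large enough that $\ell_N \ge k$. Then $\mathsf L(B^{NM_1}) \in \mathcal L(G)$ is an AAP with difference $d$ and length at least $k$, and by Lemma~\ref{2.3}(6) its elasticity satisfies $\rho(\mathsf L(B^{NM_1})) \ge \rho(\mathsf L(B))$. Hence $\rho(G, d, k) \ge \rho(\mathsf L(B))$ for every $k \in \N$; the sequence $\bigl(\rho(G,d,k)\bigr)_{k=1}^{\infty}$ being decreasing, passing to the limit yields the desired inequality $\rho(G, d) \ge \rho(\mathsf L(B))$.

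The only delicate point is the opening observation, which guarantees that the difference produced by Lemma~\ref{2.5} is exactly $d$ and not some proper multiple. Without the hypothesis $d \in \Delta(\mathsf L(B))$ one would only know $\min\Delta(\supp(B)) \ge d$, possibly strict, and one would have to pad $B$ by an auxiliary factor such as $\prod_{g \in G_0 \setminus \supp(B)} g^{\ord(g)}$ in order to force the correct difference; the hypothesis makes this step unnecessary, and the remainder of the argument is routine bookkeeping with growth of lengths and elasticities under taking powers.
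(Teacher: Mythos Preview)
Your proof is correct, but it takes a heavier route than the paper's. You invoke the structure theorem (Lemma~\ref{2.5}) to certify that $\mathsf L(B^{NM_1})$ is an AAP with difference $d$ and then estimate its length via the uniform bound $M_2$. The paper instead argues entirely elementarily: since $B^k \in \mathcal B(G_0)$ and $d = \min\Delta(G_0) = \gcd\Delta(G_0)$, every distance of $\mathsf L(B^k)$ is a multiple of $d$, so $\mathsf L(B^k)$ lies in a single residue class modulo $d$; moreover, from $a, a+d \in \mathsf L(B)$ the $k$-fold sumset gives $\{ka, ka+d, \ldots, ka+kd\} \subset \mathsf L(B^k)$. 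These two facts alone make $\mathsf L(B^k)$ an AAP with difference $d$ and length at least $k$, for \emph{every} $k$, without any appeal to Lemma~\ref{2.5} or growth estimates. The rest (Lemma~\ref{2.3}(6) and passing to the limit) is the same in both proofs.

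Your opening observation that $\min\Delta(\supp(B)) = d$ is correct and your application of Lemma~\ref{2.5} to $A = B^N$ (noting $\supp(B^N) = \supp(B)$) is valid, so there is no gap. The trade-off is that your argument imports a deep structure theorem where a two-line pigeonhole suffices; on the other hand, your approach makes explicit why the difference is exactly $d$ rather than a multiple, a point the paper leaves implicit in the phrase ``by definition, $\mathsf L(B)$ is an AAP with difference $\min\Delta(G_0)$''.
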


\begin{proof}
Let $B\in \mathcal B(G_0)$ with $\min \Delta(G_0)\in \Delta(\mathsf L(B))$. By definition, $\mathsf L(B)$ is an AAP with difference $\min \Delta(G_0)$ and length at least $1$.  Therefore for every $k\in \N$,  $\mathsf L(B^k)$ is an AAP with difference $\min \Delta(G_0)$ and length at least $ k$. Thus  for every $k\in \N$, $\rho(G, \min \Delta(G_0), k)\ge \rho(\mathsf L(B^k))\ge \rho(\mathsf L(B))$ by Lemma \ref{2.3}.6. Therefore $\rho(G,\min \Delta(G_0))\ge \rho(\mathsf L(B))$.
\end{proof}

\begin{definition}\label{3.3}
Let  $d\in \Delta_1(G)$.  We define
\begin{align*}
 \rho^*(G,d)&=\max\{\rho(G_0)\mid G_0\subset G \text{ is a non-half-factorial subset  with $d$ divides }  \min\Delta(G_0)\}\,,\text{ and }\\
\mathsf K(G,d)&=\max\{\mathsf K(G_0)\mid G_0\subset G \text{ is a non-half-factorial subset with $d$ divides }\min\Delta(G_0)\}\,.
\end{align*}
\end{definition}

Note that $\rho^*(G, 1)=\rho(G)$ and $\mathsf K(G,1)=\mathsf K(G)$. For every $d\in \Delta_1(G)$, there always exists $G_0\subset G$ with $G_0$ non-half-factorial such that $d\t \min\Delta(G_0)$ by Lemma \ref{le2}.1. Since $\rho(G_0)>1$ and $\mathsf K(G_0)\ge 1$, we have
 $\rho^*(G, d)>1$ and $\mathsf K(G, d)\ge 1$. Furthermore, there exist $G_1, G_2\subset G$ with $d\t \min\Delta(G_1)$ and $d\t \min \Delta(G_2)$ such that $\rho^*(G,d)=\rho(G_1)$ and $\mathsf K(G,d)=\mathsf K(G_2)$.

\begin{lemma}\label{3.4}
Let $d\in \Delta_1(G)$. Then
\begin{enumerate}
\item  $\rho^*(G, d)\ge \rho(G,d)$.

\item $\rho^*(G,d)\le \rho(G, k_0d)$ for some $k_0\in \N$ with $k_0d\in \Delta_1(G)$.

\item $\rho^*(G,d)=\max\{\rho(G,kd)\mid k\in \N \text{ and }kd\in \Delta_1(G)\}$.
\end{enumerate}
\end{lemma}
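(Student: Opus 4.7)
The plan is to prove (2) first by a direct construction, then deduce (1) via a limit argument on supports, and combine both for (3). For (2), I would select a non-half-factorial $G_0 \subset G$ with $d \mid d_0 := \min \Delta(G_0)$ realizing $\rho^*(G,d) = \rho(G_0)$, and set $k_0 := d_0/d$; since $d_0 = \min\Delta(G_0) \in \Delta^*(G) \subset \Delta_1(G)$, we have $k_0 d \in \Delta_1(G)$. By accepted elasticity (Lemma~\ref{2.3}.3), pick $B \in \mathcal B(G_0)$ with $\rho(\mathsf L(B)) = \rho(G_0)$; after replacing $G_0$ by $\supp(B)$ (which is still non-half-factorial and whose $\min\Delta$ remains a multiple of $d$, since $\Delta(\supp(B)) \subset \Delta(G_0) \subset d_0\mathbb{Z} \subset d\mathbb{Z}$) I may assume $\supp(B) = G_0$ and redefine $d_0, k_0$ accordingly. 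Lemma~\ref{2.5} applied to $B$ then yields $\mathsf L(B^{M_1})$ as an AAP with difference $\min\Delta(\supp(B)) = d_0$ and length at least $1$, hence $d_0 \in \Delta(\mathsf L(B^{M_1}))$, and Lemma~\ref{2.3}.6 gives $\rho(\mathsf L(B^{M_1})) \ge \rho(\mathsf L(B)) = \rho^*(G,d)$. Applying Lemma~\ref{rho} to $B^{M_1} \in \mathcal B(G_0)$ then yields $\rho(G, k_0 d) = \rho(G, d_0) \ge \rho^*(G,d)$, which is (2).

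For (1), for each $k \in \mathbb N$ pick $L_k = \mathsf L(B_k) \in \mathcal L(G)$ which is an AAP with difference $d$ and length at least $k$ with $\rho(L_k)$ approaching $\rho(G,d,k)$. The AAP property forces $L_k \subset y_k + d\mathbb{Z}$, so $d \in \Delta(L_k) \subset \Delta(\supp(B_k))$ and thus $\min\Delta(\supp(B_k)) \mid d$; passing to a subsequence by pigeonhole on subsets of $G$, $G_0 := \supp(B_k)$ may be assumed fixed. The crucial claim is that for $k$ sufficiently large $\min\Delta(G_0) = d$: otherwise $d_0 := \min\Delta(G_0)$ is a proper divisor of $d$, an auxiliary $B' \in \mathcal B(G_0)$ with $d_0 \in \Delta(\mathsf L(B'))$ exists, and since $|B_k| \ge 2(y_k + kd) \to \infty$ (as atoms have length at least $2$ and $\max L_k \ge y_k + kd$) the multiplicities of $B_k$ eventually dominate those of $B'$, so $B' \mid B_k$; then $\mathsf L(B_k) \supset \mathsf L(B') + \mathsf L(B_k/B')$ exhibits a length difference $d_0 < d$, contradicting $L_k \subset y_k + d\mathbb{Z}$. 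Once $d \mid \min\Delta(G_0)$, the subset $G_0$ is admissible in the definition of $\rho^*(G,d)$, giving $\rho(L_k) \le \rho(G_0) \le \rho^*(G,d)$; letting $k \to \infty$ yields $\rho(G,d) \le \rho^*(G,d)$.

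For (3), the argument of (1) applied verbatim with $kd$ in place of $d$ gives $\rho(G, kd) \le \rho^*(G, kd)$ for every $k \in \mathbb N$ with $kd \in \Delta_1(G)$; together with the monotonicity $\rho^*(G, kd) \le \rho^*(G, d)$ (a stronger divisibility constraint on $\min\Delta$ yields a smaller family of admissible $G_0$), this delivers $\rho(G, kd) \le \rho^*(G,d)$ for each such $k$. Combined with (2), which supplies some $k_0$ achieving $\rho^*(G,d) \le \rho(G, k_0 d)$, we conclude $\rho^*(G,d) = \max\{\rho(G, kd) \mid k \in \mathbb N,\ kd \in \Delta_1(G)\}$. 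The main obstacle is the divisibility step in (1): while $|B_k|$ grows with $k$, individual multiplicities $\mathsf v_g(B_k)$ for a specific $g \in \supp(B')$ need not tend to infinity, so guaranteeing $B' \mid B_k$ requires either a finer pigeonhole choosing the subsequence so that each $\mathsf v_g(B_k)$ is either bounded or unbounded (and taking $B'$ supported on the unbounded coordinates) or an alternative argument that transfers the resulting length-$d_0$ gap from a suitable power $B_k^m$ back to $\mathsf L(B_k)$.
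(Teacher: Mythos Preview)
Your arguments for (2) and (3) are essentially the paper's: choose $G_0$ realizing $\rho^*(G,d)$, pick $B$ with $\rho(\mathsf L(B))=\rho(G_0)$, replace $G_0$ by $\supp(B)$, and invoke Lemma~\ref{2.5} together with Lemma~\ref{rho}. That part is fine.

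The gap you yourself flag in (1) is real and your suggested repairs do not close it. From $L_k\subset y_k+d\Z$ you correctly get $\min\Delta(\supp(B_k))\mid d$, but for admissibility in $\rho^*(G,d)$ you need the \emph{reverse} divisibility $d\mid\min\Delta(G_0)$, and there is no reason this should hold for $G_0=\supp(B_k)$: an element $g\in\supp(B_k)$ may occur only a few times in $B_k$ yet still combine with other elements of $G_0$ to produce atoms witnessing a distance $d_0<d$ in $\Delta(G_0)$. Your divisibility argument ``$B'\mid B_k$'' fails precisely here, and the first proposed fix (restricting $B'$ to the ``unbounded'' coordinates $G_0'\subset G_0$) does not help: nothing forces $\min\Delta(G_0')<d$, and even if $d\mid\min\Delta(G_0')$, the leftover $\prod_{g\notin G_0'}g^{\mathsf v_g(B_k)}$ need not be a zero-sum sequence, so you cannot compare $\rho(\mathsf L(B_k))$ with $\rho(G_0')$ cleanly. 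The second fix (passing to powers) goes the wrong way: a $d_0$-gap in $\mathsf L(B_k^m)$ does not yield one in $\mathsf L(B_k)$.

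The paper resolves this by working with \emph{atom} multiplicities rather than element multiplicities. Write a minimal-length factorization $B_k=U_1^{t_1}\cdots U_s^{t_s}$ with the $U_i$ pairwise distinct; since $s\le|\mathcal A(G)|$ and $\sum t_i=\min\mathsf L(B_k)\to\infty$, once $\min\mathsf L(B_k)\ge M|\mathcal A(G)|$ the set $I=\{i:t_i\ge M\}$ is nonempty, where $M$ is the constant from Lemma~\ref{2.5}. Put $G_0=\supp(\prod_{i\in I}U_i)$. Now $C:=\prod_{i\in I}U_i^M$ genuinely divides $B_k$ (each factor occurs at least $M$ times), so $\mathsf L(C)\subset z+d\Z$ for some $z$, and Lemma~\ref{2.5} gives $\min\Delta(G_0)\in\Delta(\mathsf L(C))$, whence $d\mid\min\Delta(G_0)$ and $\rho(G_0)\le\rho^*(G,d)$. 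The atoms outside $I$ have total length at most $M|\mathcal A(G)|\mathsf D(G)$, so $\max\mathsf L(B_k)\le\rho(G_0)\cdot\min\mathsf L(B_k)+M|\mathcal A(G)|\mathsf D(G)$, giving $\rho(\mathsf L(B_k))\le\rho^*(G,d)+O(1/\min\mathsf L(B_k))$. This is exactly the ``finer pigeonhole'' you were reaching for, but executed on the level of atoms so that both pieces of the splitting are automatically zero-sum sequences.
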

\begin{proof} 
 1. By the definition of $\Delta_1(G)$ and $d\in  \Delta_1(G)$, 
for every $k\in\N$, we let $B_k\in \mathcal B(G)$ be such that $\mathsf L(B_k)$ is an AAP with difference $d$ and length at least $ k$. Let $$\ell_k=\min\{\min L_k\mid L_k\in \mathcal L(G) \text{ is an AAP with difference $d$ and length at least $ k$}\}$$ and hence  $\lim_{k\rightarrow \infty}\ell_k=\infty$ by $\rho(G)$ is finite.

By Lemma \ref{2.5}, there exists a constant
 $M$ such that $\min \Delta(\supp(B))\in \Delta(\mathsf L(B^M))$ for all $B\in \mathcal B(G)$ with $\Delta(\supp(B))\neq \emptyset$. Since $\lim_{k\rightarrow \infty}\ell_k=\infty$, we
 let $k\in \N$ be large enough such that $\min\mathsf L(B_k)\ge M|\mathcal A(G)|$
and  assume $$B_k=U_1^{t_1}\ldots U_{s}^{t_s}=V_1\ldots V_{\max\mathsf L(B_k)}\,,$$
 where $s, t_1,\ldots,t_s\in \N$, $t_1+\ldots+t_s=\min \mathsf L(B_k)$,  $U_1,\ldots, U_s$ are pair-wise distinct atoms,  and $V_1, \ldots, V_{\max\mathsf L(B_k)}$ are atoms.
 
 Since $\min\mathsf L(B_k)\ge M|\mathcal A(G)|$, we infer there exists $i\in [1,s]$, say $i=1$, such that $t_1\ge M$. Set $I=\{i\in [1,s]\mid t_i\ge M\}$ and $G_0=\supp(\prod_{i\in I}U_i)$. It follows by the choice of $M$ that 
 $$\min \Delta(G_0)\in \Delta (\mathsf L(\prod_{i\in I}U_i^{M}))\,. $$
 Since $\mathsf L(B_k)$ is an AAP with difference $d$ and $\prod_{i\in I}U_i^{M})$ is a subsequence of $B_k$,  we obtain $$d\t \gcd\Delta(\mathsf L(B_k))\t \gcd \Delta(\mathsf L(\prod_{i\in I}U_i^{M}))\,.$$
 Therefore $d\t \min \Delta(G_0)$  and hence $\rho(G_0)\le \rho^*(G, d)$. Since $|\prod_{i\not\in I }U_i^{t_i}|\le M|\mathcal A(G)|\mathsf D(G)$, there exists $J\subset [1, \max\mathsf L(B_k)]$ with $|J|\ge \max\mathsf L(B_k)-M|\mathcal A(G)|\mathsf D(G)$ such that $\prod_{j\in J}V_j$  divides $ \prod_{i\in I}U_i^{t_i}$. It follows by $\min\mathsf L(\prod_{i\in I}U_i^{t_i})=\sum_{i\in I}t_i$ that
\[
\begin{aligned}
\max\mathsf L(B_k)-M|\mathcal A(G)|\mathsf D(G)&\le |J|\le \max\mathsf L(\prod_{i\in I}U_i^{t_i})\\
&\le (\sum_{i\in I}t_i)\rho(G_0)\le \min \mathsf L(B_k)\rho^*(G,d)\,.
\end{aligned}\]
Therefore $$\rho(\mathsf L(B_k))\le \rho^*(G,d)+\frac{M|\mathcal A(G)|\mathsf D(G)}{\min \mathsf L(B_k)}\le \rho^*(G,d)+\frac{M|\mathcal A(G)|\mathsf D(G)}{\ell_k}\,.$$
By definition, we infer $\rho(G,d,k)\le \rho^*(G,d)+\frac{M|\mathcal A(G)|\mathsf D(G)}{\ell_k}$ which implies that $\rho(G,d)\le \rho^*(G,d)$.

2. 
Let $G_0\subset G$  with $d\t \min \Delta(G_0)$ and $\rho(G_0)=\rho^*(G,d)$.  Then there exists $B\in \mathcal B(G_0)$ such that $\rho(\mathsf L(B))=\rho^*(G,d)$. 
 Since $\supp(B)\subset G_0$, we infer $\min \Delta(G_0)$ divides $\min\Delta(\supp(B))$ and hence $d$ divides $ \min\Delta(\supp(B))$. 
 
 By Lemma \ref{2.5},  there exists a constant
  $M$ such that
$\min \Delta(\supp(B))\in \Delta(\mathsf L(B^M))$. It follows by Lemma \ref{rho} and Lemma \ref{2.3}.6 that $$\rho(G, \min \Delta(\supp(B)))\ge \rho(\mathsf L(B^M))\ge \rho(\mathsf L(B))=\rho^*(G,d)\,.$$ Thus the assertion  follows by  $d$ divides $ \min\Delta(\supp(B))$.

3. For every $k\in \N$ such that $kd\in \Delta_1(G)$, we have $\rho(G, kd)\le \rho^*(G, kd)\le \rho^*(G, d)$ by 1.. Therefore $\rho^*(G,d)\ge \max\{\rho(G,kd)\mid k\in \N \text{ and }kd\in \Delta_1(G)\}$. It follows by 2. that $\rho^*(G,d)=\max\{\rho(G,kd)\mid k\in \N \text{ and }kd\in \Delta_1(G)\}$.
\end{proof}

\begin{proposition}\label{3.5}
Suppose $\mathcal L(G)=\mathcal L(G')$ for some finite abelian group $G'$ and let $d\in \Delta_1(G)$. Then $d\in \Delta_1(G')$,  $\rho(G,d)=\rho(G', d)$, and $\rho^*(G, d)=\rho^*(G', d)$.
\end{proposition}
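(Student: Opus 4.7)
The plan is to observe that all three invariants $\Delta_1(G)$, $\rho(G,d)$, and $\rho^*(G,d)$ can be expressed purely in terms of the system $\mathcal L(G)$, so that the equality $\mathcal L(G)=\mathcal L(G')$ forces the invariants to agree. The nontrivial work has already been done in Lemma \ref{3.4}.3, which reduces $\rho^*(G,d)$ to $\rho(G,\cdot)$ values together with membership in $\Delta_1(G)$; everything else is unpacking definitions.

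First I would handle $\Delta_1(G)$. By its definition, $d\in\Delta_1(G)$ if and only if for every $k\in\N$ there is a set $L\in\mathcal L(G)$ that is an AAP of difference $d$ and length at least $k$. Since this condition refers only to $\mathcal L(G)$, the hypothesis $\mathcal L(G)=\mathcal L(G')$ immediately yields $\Delta_1(G)=\Delta_1(G')$; in particular $d\in\Delta_1(G')$.

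Next I would treat $\rho(G,d)$. For each fixed $k\in\N$, the quantity
\[
\rho(G,d,k)=\sup\{\rho(L)\mid L\in\mathcal L(G)\text{ is an AAP with difference }d\text{ and length}\ge k\}
\]
depends only on $\mathcal L(G)$, so $\rho(G,d,k)=\rho(G',d,k)$ for every $k$. Passing to the limit as $k\to\infty$ (the limit exists by Definition \ref{3.1}) gives $\rho(G,d)=\rho(G',d)$.

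Finally, for $\rho^*(G,d)$ I would invoke Lemma \ref{3.4}.3, which states
\[
\rho^*(G,d)=\max\{\rho(G,kd)\mid k\in\N\text{ and }kd\in\Delta_1(G)\},
\]
and the analogous identity for $G'$. The right-hand side involves only $\Delta_1(G)$ and the numbers $\rho(G,kd)$, both of which have already been shown to coincide with their $G'$-counterparts. Hence $\rho^*(G,d)=\rho^*(G',d)$. No genuine obstacle arises here; the content of the proposition is really packaged in Lemma \ref{3.4}.3, which converts the a priori group-theoretic definition of $\rho^*(G,d)$ into one phrased solely through $\mathcal L(G)$.
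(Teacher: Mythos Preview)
Your proposal is correct and follows essentially the same route as the paper: both argue that $\Delta_1(G)$ and the $\rho(G,kd)$ are determined purely by $\mathcal L(G)$, and then invoke Lemma~\ref{3.4}.3 to transfer $\rho^*(G,d)$. The paper's version is terser but structurally identical.
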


\begin{proof}
Since $\mathcal L(G)=\mathcal L(G')$, it follows by definition that $\Delta_1(G)=\Delta_1(G')$ and $\rho(G, kd)=\rho(G', kd)$ for every $k\in \N$ such that $kd\in \Delta_1(G)$. By Lemma \ref{3.4}.3, we obtain $\rho^*(G,d)=\rho^*(G',d)$. 
\end{proof}

\begin{lemma}\label{3.2}
Let $d\in \Delta_1(G)$. Then $\rho^*(G,d)\ge \mathsf K(G,d)$. In particular,
if $d\in [1,r-1]$, then $\rho^*(G,d)\ge \mathsf K(G,d)\ge 1+\frac{(n_1-1)d}{n_1}\ge 1+\frac{d}{2}$.
\end{lemma}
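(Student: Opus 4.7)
The statement decomposes naturally into two parts: the general inequality $\rho^*(G,d) \ge \mathsf K(G,d)$, and, under the extra hypothesis $d \in [1,r-1]$, the explicit lower bound $\mathsf K(G,d) \ge 1 + (n_1-1)d/n_1$.

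For the first inequality, I would prove the auxiliary pointwise statement that $\rho(G_0) \ge \mathsf K(G_0)$ for every non-half-factorial subset $G_0 \subset G$ with $d \t \min\Delta(G_0)$; passing to maxima over all such $G_0$ then yields $\rho^*(G,d) \ge \mathsf K(G,d)$ directly from Definition~\ref{3.3}. Pick an atom $A \in \mathcal A(G_0)$ realizing the cross number, i.e.\ $\mathsf k(A) = \mathsf K(G_0)$. By Lemma~\ref{2.3}.7, $\{\exp(G), \exp(G)\mathsf k(A)\} \subset \mathsf L(A^{\exp(G)})$, and since the factorizations witnessing these lengths use only atoms supported in $G_0$, the same holds in $\mathcal B(G_0)$. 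Consequently
\[
\rho(G_0) \;\ge\; \rho\bigl(\mathsf L(A^{\exp(G)})\bigr) \;\ge\; \frac{\exp(G)\mathsf k(A)}{\exp(G)} \;=\; \mathsf K(G_0),
\]
with the case $\mathsf K(G_0) = 1$ being automatic since $\rho(G_0) > 1$ as $G_0$ is non-half-factorial.

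For the ``in particular'' part, I would exhibit a concrete $G_0$ realizing the bound. The divisibility chain $n_1 \t n_2 \t \ldots \t n_r$ lets us choose independent elements $e_1, \ldots, e_{d+1} \in G$ all of order $n_1$ (as $d+1 \le r$), and I set $g = e_1 + \ldots + e_{d+1}$ (which has order $n_1$ by independence) and $G_0 = \{g, e_1, \ldots, e_{d+1}\}$. Lemma~\ref{2.1}.1 (applied with $r$ replaced by $d+1$) gives $\Delta(G_0) = \{d\}$, so $G_0$ is non-half-factorial and $d \t \min\Delta(G_0)$. I then consider
\[
A \;=\; g \cdot e_1^{\,n_1-1} \cdot e_2^{\,n_1-1} \cdots e_{d+1}^{\,n_1-1}.
\]
Its sum equals $g + (n_1-1)(e_1 + \ldots + e_{d+1}) = n_1 g = 0$. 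To see it is an atom, any proper zero-sum subsequence has the form $g^a e_1^{b_1} \cdots e_{d+1}^{b_{d+1}}$ with $a \in \{0,1\}$ and $b_i \in [0,n_1-1]$; independence of $e_1, \ldots, e_{d+1}$ forces $a + b_i \equiv 0 \pmod{n_1}$ for each $i$, which yields either the empty sequence ($a=0$) or $A$ itself ($a=1$). Computing,
\[
\mathsf k(A) \;=\; \frac{1}{n_1} + (d+1)\cdot\frac{n_1-1}{n_1} \;=\; 1 + \frac{d(n_1-1)}{n_1},
\]
so $\mathsf K(G,d) \ge \mathsf K(G_0) \ge \mathsf k(A) = 1 + d(n_1-1)/n_1$, and the final estimate $1 + d(n_1-1)/n_1 \ge 1 + d/2$ is immediate from $n_1 \ge 2$.

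No serious obstacle is anticipated: the first part is a direct application of Lemma~\ref{2.3}.7, and the main subtlety in the second part is simply checking that $A$ is an atom, which reduces to the standard independence argument above.
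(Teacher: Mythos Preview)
Your proof is correct and follows essentially the same route as the paper: both parts use the identical ingredients --- Lemma~\ref{2.3}.7 applied to an atom realizing the maximal cross number for the first inequality, and the explicit set $G_0 = \{e_1 + \ldots + e_{d+1}, e_1, \ldots, e_{d+1}\}$ together with Lemma~\ref{2.1}.1 and the atom $A = g\,e_1^{n_1-1}\cdots e_{d+1}^{n_1-1}$ for the second. Your write-up is in fact slightly more careful, since you verify explicitly that $A$ is an atom, whereas the paper simply asserts this.
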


\begin{proof} 
 Suppose  $G_0\subset G$ with $d$ dividing $ \min \Delta(G_0)$ and $\mathsf K(G_0)=\mathsf K(G,d)$. Then there exists $A\in \mathcal A(G_0)$ such that $\mathsf k(A)=\mathsf K(G,d)\ge 1$.  Since $$\{\exp(G), \exp(G)\mathsf k(A)\}\subset \mathsf L(A^{\exp(G)})\,,$$ we have $$\rho^*(G,d)\ge \rho(G_0)\ge \rho(\mathsf L(A^{\exp(G)}))\ge \frac{\exp(G)\mathsf k(A)}{\exp(G)}=\mathsf k(A)=\mathsf K(G,d)\,.$$

In particular, if $d\in [1,r-1]$, we  let $e_1,\ldots, e_{d+1}$ be independent elements of order $n_1$. Set   $e_0=e_1+\ldots+e_{d+1}$ and $G_0=\{e_0,e_1,\ldots, e_{d+1}\}$. Then $\min \Delta(G_0)=d$ by Lemma \ref{2.1}.1.

 Since $A=e_0\cdot e_1^{n_1-1}\ldots e_{d+1}^{n_1-1}$  is an atom with $\mathsf k(A)=1+\frac{(n_1-1)d}{n_1}
$,  it follows that $$\mathsf K(G,d)\ge \mathsf K(G_0)\ge \mathsf k(A)\ge 1+\frac{(n_1-1)d}{n_1}\ge \frac{1+d}{2} \,.$$\qedhere
\end{proof}

\begin{lemma}\label{k}
Suppose $r\ge \lfloor\frac{n_r}{2}\rfloor+1$. Let  $G_0\subset G$ be a subset with $d\t \min\Delta(G_0)$, where $d\in \Delta_1(G)$ satisfies $r-1 \ge d\ge \lfloor\frac{n_r}{2}\rfloor$.  If  $A\in\mathcal A(G_0)$ with $\mathsf k(A)>1$,  then $\mathsf k(A)=1+\frac{sd}{n_{r-d}}$ for some $s\in \N$.

 In particular,
\begin{enumerate}

\item $\mathsf K(G, r-1)=1+\frac{s(r-1)}{n_1}$ for some  $s\ge n_1(u-\sum_{i=1}^u\frac{1}{p_i^{k_i}})$, where $n_1=p_1^{k_1}\ldots p_u^{k_u}$ with $u, k_1,\ldots, k_u\in\N$ and $p_1,\ldots, p_u$ are pairwise distinct primes.

\item if $n_r$ is a prime power, then $\mathsf K(G, r-1)=1+\frac{(n_1-1)(r-1)}{n_1}<r$.

\item if $n_1$ is not a prime power, then $\mathsf K(G, r-1)>r$.
\end{enumerate}
 \end{lemma}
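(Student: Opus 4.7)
The plan is to proceed in three phases: first prove the structural claim that $\ord(h) \mid n_{r-d}$ for every $h \in \supp(A)$; then derive the cross-number formula by comparing two factorizations of $A^{n_{r-d}}$; and finally handle the three ``in particular'' assertions via explicit constructions together with the general formula.

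For the structural phase, note that since $d \mid \min\Delta(G_0)$ and $d \ge \lfloor n_r/2 \rfloor$, we have $\min \Delta(G_0) \ge \lfloor \exp(G)/2 \rfloor$, so Lemma \ref{2.4} is in force. Because $\mathsf k(A) > 1$, Lemma \ref{2.4}.2 ensures that $\supp(A)$ is an LCN-set. Assume for contradiction that there exists $g \in \supp(A)$ with some prime power $p^k$ dividing $\ord(g)$ but $p^k \nmid n_{r-d}$. The plan is to project onto the $p$-Sylow subgroup and use both the rigid form of atoms with cross number below $1$ given by Lemma \ref{2.4}.1 (each such atom uses only elements of order $\exp(G)$, each with multiplicity one) and the cross-number information on $p$-groups from Lemma \ref{le2.1}, to produce an integer $m$ such that two factorizations of $A^m$ have length-difference strictly less than $d$, contradicting $d \mid \min \Delta(G_0)$.

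Once $\ord(h) \mid n_{r-d}$ for all $h \in \supp(A)$ is established, the identity
\[
A^{n_{r-d}} \;=\; \prod_{h \in \supp(A)} \bigl(h^{\ord(h)}\bigr)^{\mathsf v_h(A)\, n_{r-d}/\ord(h)}
\]
gives a factorization of $A^{n_{r-d}}$ into atoms of $\mathcal B(G_0)$ of length $n_{r-d}\mathsf k(A)$. Combined with the trivial factorization $A \cdot A \cdots A$ of length $n_{r-d}$, both lengths lie in $\mathsf L(A^{n_{r-d}}) \in \mathcal L(G_0)$, so $n_{r-d}(\mathsf k(A) - 1) > 0$ is a multiple of $\min \Delta(G_0)$, and hence of $d$; writing $s = n_{r-d}(\mathsf k(A)-1)/d \in \N$ gives $\mathsf k(A) = 1 + sd/n_{r-d}$.

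For the ``in particular'' cases, I specialize $d = r-1$ so that $n_{r-d} = n_1$. For the lower bound on $s$ in (1), take $r$ independent elements $e_1, \ldots, e_r \in G$ of order $n_1$ (available since $n_1 \mid n_i$ for each $i$) and set $e_0 = e_1 + \ldots + e_r$; by Lemma \ref{2.1}.1 the set $G_0 = \{e_0, e_1, \ldots, e_r\}$ satisfies $\min \Delta(G_0) = r-1$. Via the CRT decomposition $C_{n_1}^r \cong \bigoplus_{i=1}^u C_{p_i^{k_i}}^r$, I would build an atom within $C_{n_1}^r$ that, for each prime $p_i$, includes $p_i^{k_i}-1$ copies of each $p_i$-component of $e_j$, balanced by an appropriate number of copies of $\pm e_0$; a short computation gives cross number at least $1 + (r-1)(u - \sum 1/p_i^{k_i})$, i.e.\ $s \ge n_1(u - \sum 1/p_i^{k_i})$. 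For (2), when $n_r$ (hence $n_1$) is a prime power, by the structural phase $\supp(A) \subseteq G[n_1] = C_{n_1}^r$, and Lemma \ref{le2.1} gives $\mathsf k(A) \le \mathsf K(C_{n_1}^r) = 1 + (n_1-1)(r-1)/n_1 < r$; the $u=1$ case of the construction achieves equality. For (3), when $n_1$ is not a prime power, $u \ge 2$, and $\sum_{i=1}^u 1/p_i^{k_i} < u-1$ (checked directly for $u=2$ via $1/2 + 1/3 < 1$, and easier for $u \ge 3$), so the construction in (1) gives $\mathsf K(G, r-1) \ge 1 + (r-1)(u - \sum 1/p_i^{k_i}) > r$.

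The main obstacle is the structural claim in the first phase, namely ruling out elements $h \in \supp(A)$ with $\ord(h) \nmid n_{r-d}$. The condition $d \ge \lfloor n_r/2 \rfloor$ is tight, and the argument has to carefully combine the LCN structure of $\supp(A)$, the precise description of sub-unit-cross-number atoms from Lemma \ref{2.4}.1, and the divisibility constraints imposed by $d \mid \min \Delta(G_0)$.
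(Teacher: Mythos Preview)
Your proposal hinges entirely on the ``structural claim'' that $\ord(h)\mid n_{r-d}$ for every $h\in\supp(A)$, and you candidly flag this as the main obstacle while offering only a vague sketch (project to a Sylow subgroup, invoke Lemma~\ref{2.4}.1 and Lemma~\ref{le2.1}, and produce a length-difference below $d$). This is where the proposal breaks down: the sketch is not a proof, and there is no evident mechanism by which the hypotheses force the \emph{orders} of the elements of $\supp(A)$ to divide $n_{r-d}$. What the lemma actually guarantees is only that $n_{r-d}\,\mathsf k(A)\in\Z$, which is strictly weaker (for instance one can have $n_{r-d}\mathsf k(A)\in\Z$ while individual summands $n_{r-d}/\ord(h)$ are non-integral). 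Your argument for part~(2) also rests on this claim via ``$\supp(A)\subset G[n_1]$'', so the gap propagates.

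The paper avoids this issue entirely by a different device. Setting $t=r-d$, it proves a \emph{complementation} statement (Claim~A): if $B\in\mathcal B(G_0)$ has $n_t\mid\mathsf v_g(B)$ for all $g$ and $\supp(B)$ is LCN, then after multiplying by a suitable product $B_0$ of cross-number-$1$ atoms (with the same divisibility on multiplicities), $BB_0$ factors as a product of cross-number-$1$ atoms. The engine is a rank bound: for each prime $q$ dividing $n_r/n_t$, the relevant $q$-subgroup of $n_tG$ has rank at most $r-t=d$, so one can realise the needed relations inside subsets of size $\le d+1$, which are forced to be half-factorial because $d\mid\min\Delta(G_0)$ and LCN-sets satisfy $\min\Delta\le|\cdot|-2$. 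Applying this to $B=A^{n_t}$ yields $\{n_t+\ell,\,n_t\mathsf k(A)+\ell\}\subset\mathsf L(A^{n_t}B_0)$, hence $d\mid n_t(\mathsf k(A)-1)$ --- no control on individual orders is needed. For part~(2) the paper simply uses $\mathsf K(G)<r$ from Lemma~\ref{le2.1} (since $G$ is a $p$-group) together with the formula from part~(1), again bypassing your structural claim. For part~(1) the paper gives an explicit set $G_2$ (built from a CRT basis with a single element $e_0$, no $-e_0$) and verifies $r-1\mid\min\Delta(G_2)$ by a direct computation of $\mathsf k(W)$ for all atoms $W$; your description of the construction is close in spirit but not pinned down.
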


\begin{proof} Let $t=r-d$ and we start with the following claim.

\medskip
\noindent{\bf Claim A: }
{\it Let $B\in \mathcal B(G_0)$ with  $\mathsf v_g(B)\equiv 0 \pmod {n_t}$ for each $g\in \supp(B)$ and $\supp(B)$ is an LCN-set. Then there exists $B_0\in \mathcal B(G_0)$ with $B_0$ is a product of atoms having cross number $1$ and $\mathsf v_g(B_0)\equiv 0 \pmod {n_t}$ for each $g\in \supp(B_0)$ such that $BB_0$ is a product of atoms having cross number $1$.}
\medskip

\noindent{\it Proof of {\bf Claim A}}.
Assume to the contrary that there exists a $B\in \mathcal B(G_0)$ with  $\mathsf v_g(B)\equiv 0 \pmod {n_t}$ for each $g\in \supp(B)$ and $\supp(B)$ is an LCN-set, 
such that the assertion does not hold.
Suppose $|\supp(B)|$ is minimal in all the counterexamples. 

Set $G_1=\supp(B)$. If for all $g\in G_1$,  $\ord(g)\mid \mathsf v_{g}(B)$, then $B$ is a product of atoms having cross number $1$, a contradiction. Therefore there exits $g_0\in G_1$ such that $\ord(g_0)\nmid \mathsf v_{g_0}(B)$.  Since $\mathsf v_g(B)\equiv 0 \pmod {n_t}$ for each $g\in \supp(B)$, we infer $\mathsf v_{g_0}(B)g_0\in \langle \{n_tg\mid g\in G_1\setminus\{g_0\}\}\rangle$ and $n_t\neq n_r$. 

 Let $\frac{n_r}{n_t}=q_1^{s_1}\ldots q_{v}^{s_{v}}$, where $v, s_1,\ldots, s_v\in \N$ and $q_1,\ldots, q_v$ are pairwise distinct primes.
 Let $i\in [1,v]$ and $H_i=\langle \{\frac{n_r}{n_tq_i^{s_i}}n_tg\mid g\in G_1\setminus\{g_0\}\}\rangle$. Then 
 $$\frac{n_r}{n_tq_i^{s_i}}\mathsf v_{g_0}(B)g_0\in H_i$$ and $H_i$ is an $q_i$-group of rank $\mathsf r(H_i)\le r-t=d$ which implies that there exists $E\subset G_1\setminus\{g_0\}$ with $|E|\le \mathsf r(H_i)\le d$ such that $$\frac{n_r}{n_tq_i^{s_i}}\mathsf v_{g_0}(B)g_0\in \langle \{\frac{n_r}{n_tq_i^{s_i}}n_tg\mid g\in E\}\rangle\,.$$
   Therefore
 there exists $B_i\in \mathcal B(G_1)$ such that $$|\supp(B_i)|\le d+1,\ \mathsf v_{g_0}(B_i)=\frac{n_r}{n_tq_i^{s_i}}\mathsf v_{g_0}(B),\ \text{ and } n_t\t \mathsf v_g(B_i) \text{ for every }g\in G_1\setminus\{g_0\}\,.$$ 
 Since $\supp(B_i)\subset G_1$ is an LCN-set, we have  $$\min \Delta(\supp(B_i))\le |\supp(B_i)|-2\le d-1\,.$$ Note that  $d\t \min \Delta(G_0)\t \min \Delta(\supp(B_i))$. We infer $\min\Delta(\supp(B_i))=0$ and hence $\supp(B_i)$ is half-factorial. Therefore  $B_i$ is a product of of atoms having cross number $1$. 
 
  Since $\gcd(\{\mathsf v_{g_0}(B_i)\mid i\in [1,v]\})=\mathsf v_{g_0}(B)$,  there exist $x_1,\ldots, x_{v}\in \N$ such that $$\mathsf v_{g_0}(BB_1^{x_1}\ldots B_{v}^{x_{v}})\equiv 0 \pmod {\ord(g_0)}\,.$$ Therefore $BB_1^{x_1}\ldots B_{v}^{x_{v}}=g_0^{y\ord(g_0)}C$ for some $y\in \N$ and $C\in \mathcal B(G_1\setminus\{g_0\})$. Note $n_t\t \mathsf v_g(B_i) $ for every $i\in [1,v]$ and every $g\in G_1\setminus\{g_0\}$. Thus
   $n_t\t \mathsf v_g(C)$ for each $g\in \supp(C)$.
   Since $|\supp(C)|<|\supp(B)|$, it follows  by the minimality of $|\supp(B)|$ that there exists $C_0\in \mathcal B(\supp(G_0))$ satisfying $C_0$ is a product of atoms having cross number $1$ and $n_t\t \mathsf v_g(C_0)$ for each $g\in \supp(C_0)$, such that $CC_0$ is a product of atoms having cross number $1$. Let $B_0=C_0B_1^{x_1}\ldots B_{t_1}^{x_{t_1}}$. Then $BB_0$ is a product of atoms having cross number $1$, a contradiction to our assumption.

\qed[Proof of {\bf Claim A}]

\medskip
Let $A\in \mathcal A(G_0)$ be with $\mathsf k(A)> 1$. Then Lemma \ref{2.4}.2 implies that $\supp(A)$ is an LCN-set. Set $B=A^{n_t}$ and hence {\bf Claim A} implies that there
 exist atoms $W_1,\ldots, W_{\ell}\in\mathcal A(G_0)$ having cross number $1$, where $\ell\in \N_0 $,   such that $A^{n_t}W_1\ldots W_{\ell}$ is a product of atoms having cross number $1$. Therefore $\{n_t\mathsf k(A)+\ell, n_t+\ell\}\subset \mathsf L(A^{n_t}W_1\ldots W_{\ell})$. Since $d\t \min \Delta(G_0)$,  we infer  $d\t (n_t\mathsf k(A)-n_t)$. It follows that $\mathsf k(A)=1+\frac{sd}{n_t}$ for some  $s\in \N$.

  \smallskip
 Now we begin to prove the "in-particular" parts.

1. For every $j\in [1,u]$ and every $m\in \Z$, we denote by $\lVert m\rVert_{j}$ the least positive residue of $m$ modulo $p_j^{k_j}$, that is, $\lVert m\rVert_{j}\in [1,p_j^{k_j}]$ and $\lVert m\rVert_{j}\equiv m \pmod {p_j^{k_j}}$.

 By definition of $\mathsf K(G, r-1)$, we have $\mathsf K(G, r-1)=1+\frac{sd}{n_1}$ for some  $s\in \N$.
Let $H$ be a subgroup of $G$ with $$H\cong C_{n_1}^r\cong C_{p_1^{k_1}}^{r-1}\oplus\ldots\oplus C_{p_u^{k_u}}^{r-1}\oplus C_{n_1}$$
 and let $(e_{1,1}, \ldots, e_{1,r-1} ,\ldots, e_{u,1}, \ldots, e_{u,r-1}, e)$ be a basis of $H$ with $\ord(e)=n_1$ and $\ord(e_{j,i})=p_j^{k_j}$ for all $i\in [1,r-1]$, $j\in [1,u]$. Set $e_0=e+\sum_{j=1}^u\sum_{i=1}^{r-1}e_{j,i}$ and $G_2=\{e_{1,1}, \ldots, e_{1,r-1} ,\ldots, e_{u,1}, \ldots, e_{u,r-1}, e, e_0\}$. For every $W\in \mathcal A(G_2)$, we have $$W=e_0^{\mathsf v_{e_0}(W)}e^{n_1-\mathsf v_{e_0}(W)}\prod_{i\in [1,r-1], j\in [1,u]}e_{j,i}^{p_j^{k_j}-\lVert \mathsf v_{e_0}(W)\rVert_{j}}$$ and
 $$ \mathsf k(W)=1+\sum_{j=1}^u\frac{p_j^{k_j}-\lVert \mathsf v_{e_0}(W)\rVert_{j}}{p_j^{k_j}}(r-1)\,.$$

We suppose that $U_1\ldots U_{\ell_1}=V_1\ldots V_{\ell_2}$ with $\ell_2-\ell_1=\min\Delta(G_2)$, where $\ell_1,\ell_2\in \N$ and $U_1,\ldots, U_{\ell_1}$, $V_1,\ldots, V_{\ell_2}\in \mathcal A(G_2)$.
Then $$\mathsf k(U_1\ldots U_{\ell_1})=\ell_1+\sum_{x=1}^{\ell_1}\sum_{j=1}^u\frac{p_j^{k_j}-\lVert \mathsf v_{e_0}(U_x)\rVert_{j}}{p_j^{k_j}}(r-1)=\ell_2+\sum_{y=1}^{\ell_2}\sum_{j=1}^u\frac{p_j^{k_j}-\lVert \mathsf v_{e_0}(V_y)\rVert_{j}}{p_j^{k_j}}(r-1)\,.$$
Since $\mathsf v_{e_0}(U_1\ldots U_{\ell_1})=\mathsf v_{e_0}(V_1\ldots V_{\ell_2})$, we infer $$\sum_{x=1}^{\ell_1}\lVert \mathsf v_{e_0}(U_x)\rVert_{j}\equiv\sum_{y=1}^{\ell_2}\lVert \mathsf v_{e_0}(V_y)\rVert_j\pmod{p_j^{k_j}}\text{ for each $j\in[1,u]$}\,.$$ 
Therefore 
$$
\begin{aligned}
&\sum_{x=1}^{\ell_1}\frac{p_j^{k_j}-\lVert \mathsf v_{e_0}(U_x)\rVert_{j}}{p_j^{k_j}}-\sum_{y=1}^{\ell_2}\frac{p_j^{k_j}-\lVert \mathsf v_{e_0}(V_y)\rVert_{j}}{p_j^{k_j}}\\
=&\ell_1-\ell_2-\frac{\sum_{x=1}^{\ell_1}\lVert \mathsf v_{e_0}(U_x)\rVert_{j}-\sum_{y=1}^{\ell_2}\lVert \mathsf v_{e_0}(V_y)\rVert_j}{p_j^{k_j}}\in \Z
\end{aligned}
$$ for each $j\in [1,u]$, whence $$\sum_{x=1}^{\ell_1}\sum_{j=1}^u\frac{p_j^{k_j}-\lVert \mathsf v_{e_0}(U_x)\rVert_{j}}{p_j^{k_j}}-\sum_{y=1}^{\ell_2}\sum_{j=1}^u\frac{p_j^{k_j}-\lVert \mathsf v_{e_0}(V_y)\rVert_{j}}{p_j^{k_j}}\in \Z\,.$$ 
Since  $\min\Delta(G_2)=\ell_2-\ell_1$, we infer  $r-1\t \min \Delta(G_2)$ which implies that $\mathsf K(G, r-1)\ge \mathsf K(G_2)$.
Let $W_1\in \mathcal A(G_2)$ be the atom with $\mathsf v_{e_0}(W_1)=1$. Then $\mathsf k(W_1)=1+(u-\sum_{i=1}^u\frac{1}{p_i^{k_i}})(r-1)$.

 Since
 $\mathsf K(G,r-1)=1+\frac{s(r-1)}{n_1}\ge \mathsf k(W_1)= 1+(u-\sum_{i=1}^t\frac{1}{p_i^{k_i}})(r-1)$ for some $s\in \N$, it follows that $s\ge n_1(u-\sum_{i=1}^u\frac{1}{p_i^{k_i}})$.

2. If $n_r$ is a prime power, then $\mathsf K(G)<r$ by Lemma \ref{le2.1}. It follows by  1. that
$r>\mathsf K(G, r-1)=1+\frac{s(r-1)}{n_1}\ge 1+\frac{(n_1-1)(r-1)}{n_1}$ for some $s\in \N$.  Therefore  $\mathsf K(G, r-1)=1+\frac{(n_1-1)(r-1)}{n_1}$.

3. If $n_1$ is not a prime power, then $u\ge 2$ and $u-\sum_{i=1}^u\frac{1}{p_i^{k_i}}>u-\frac{u}{2}\ge 1$. It follows by 1. that  $\mathsf K(G, r-1)>1+r-1=r$.
\end{proof}

\begin{proposition}\label{P-rho}
Let $s\in \N$ be maximal such that $G$ has a subgroup isomorphic to $ C_{n_r}^s$. Suppose $d\in \Delta_1(G)$ with $d\ge \max\{\left\lfloor\frac{n_r}{2}\right\rfloor, \left\lfloor\frac{r+1}{2}\right\rfloor\}$.  

\begin{enumerate}
\item  If $d\ge r$, then $\rho^*(G,d)=\frac{n_r}{n_r-d}$.

\item  If  $d\ge n_r-1$, then  $\rho^*(G,d)=\mathsf K(G,d)$.

\item Suppose $r=n_r-1\ge 3$. If $n_1=n_r=p^k$ for some prime $p$ and $k\in \N$, then $\mathsf K(G,r-1)<\rho^*(G, r-1)=1+\frac{(r+1)(r-1)}{r+2}<r$. Otherwise, $\mathsf K(G,r-1)=\rho^*(G, r-1)$.
\end{enumerate}
\end{proposition}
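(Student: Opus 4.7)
My approach is to prove matching lower and upper bounds in each part. For the general setup, the accepted elasticity of $\mathcal{B}(G_0)$ (Lemma \ref{2.3}.3) lets me pick, for any $G_0$ with $d\mid \min\Delta(G_0)=d'$, a sequence $B\in \mathcal{B}(G_0)$ with $\rho(\mathsf{L}(B))=\rho(G_0)$, and the argument scheme of Lemma \ref{2.4}.4 (which will transfer verbatim from $G$ to any such $G_0$) guarantees that a shortest factorization $B=U_1\cdots U_\ell$ has $\mathsf{k}(U_i)\ge 1$ throughout while a longest $B=V_1\cdots V_k$ has $\mathsf{k}(V_j)\le 1$. By Lemma \ref{2.4}.1 an atom of cross number strictly below $1$ must then satisfy $\mathsf{k}=(n_r-d')/n_r$ with every term of order $n_r$ and multiplicity one. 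The lower bounds come from explicit constructions. For (1) I take independent $e_1,\ldots,e_{n_r-d-1}$ of order $n_r$ (feasible because Lemma \ref{le2}.4 and the hypothesis give $n_r-d-1\le s$), set $e_0=-(e_1+\cdots+e_{n_r-d-1})$ and $G_0=\{e_0,e_1,\ldots,e_{n_r-d-1}\}$, so Lemma \ref{2.1}.2 yields $\min\Delta(G_0)=d$, and the two factorizations of $A^{n_r}$ for $A=e_0e_1\cdots e_{n_r-d-1}$ give $\rho^*(G,d)\ge n_r/(n_r-d)$. Part (2)'s lower bound is Lemma \ref{3.2}. For (3) Case A, where $G\cong C_{p^k}^r$ with $p^k=r+1$, I work in $G_0'=\{g,-g,e_1,\ldots,e_r\}$ with $g=e_1+\cdots+e_r$ (whose $\min\Delta$ is $r-1$ by Lemma \ref{2.1}.3) and produce an asymmetric $B$ whose longest factorization mixes $g(-g)$ with cross-number-one atoms while its shortest uses atoms $W_a=g^ae_1^{r+1-a}\cdots e_r^{r+1-a}$ of large cross number, saturating the value $1+(r+1)(r-1)/(r+2)$.

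For the upper bounds in (1) and (2), the cross-number identity $\mathsf{k}(B)=\sum \mathsf{k}(U_i)=\sum \mathsf{k}(V_j)$ is the key tool. In (2), $d\ge n_r-1$ forces $(n_r-d')/n_r\le 1/n_r$, which together with $|A|\ge 1$ for any atom rules out atoms of cross number below $1$; hence every $V_j$ has $\mathsf{k}(V_j)=1$, so $k=\mathsf{k}(B)\le \ell\,\mathsf{K}(G_0)\le \ell\,\mathsf{K}(G,d)$ and Lemma \ref{3.2} closes the argument. In (1), I will adapt Claim A of Lemma \ref{k} with parameter $t=1$: the subgroups $H_i$ produced by the argument now have rank at most $r-1<d$, so the auxiliary LCN sets $\supp(B_i)$ satisfy $\min\Delta(\supp(B_i))\le r-2<d\le\min\Delta(G_0)$ and are therefore half-factorial. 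This forces every atom $A$ with $\mathsf{k}(A)>1$ to obey $d\mid n_1(\mathsf{k}(A)-1)$, hence $\mathsf{k}(A)\ge 1+d/n_1$; a comparison in the style of Lemma \ref{2.4}.4 applied to $B^{n_1}$ together with this cross-number-one augmentation then forces $\mathsf{k}(U_i)=1$ for every $i$ in a shortest factorization of an extremal $B$, so $\ell=\mathsf{k}(B)\ge k(n_r-d')/n_r$ and consequently $\rho(G_0)\le n_r/(n_r-d)$.

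In (3) the atoms of cross number below $1$ (since $d=r-1=n_r-2$) are exactly the pairs $g(-g)$ with $\ord(g)=n_r$ and $\{g,-g\}\subset G_0$. In Case B, where either $n_1<n_r$ or $n_r$ is not a prime power, a finer cross-number book-keeping using the description in Lemma \ref{k}.1 of atoms of cross number exceeding $1$ shows that any $G_0$ containing such a pair still satisfies $\rho(G_0)\le \mathsf{K}(G,r-1)$, and Lemma \ref{3.2} then gives the claimed equality. In Case A, where $G\cong C_{p^k}^r$ with $p^k=r+1$, I will classify every atom of $G_0'$, namely $g(-g)$, $e_i^{r+1}$, $g^{r+1}$, $(-g)^{r+1}$, $Y_1=(-g)e_1\cdots e_r$, and $W_a$ for $a\in[1,r]$, and then solve the resulting linear program over the multiplicities $(\mathsf{v}_g(B),\mathsf{v}_{-g}(B),\mathsf{v}_{e_i}(B))$ that link compatible shortest and longest factorizations; the optimum equals $1+(r+1)(r-1)/(r+2)$, matching the lower bound, while the strict inequalities $\mathsf{K}(G,r-1)<\rho^*(G,r-1)<r$ reduce to the algebraic estimates $(r+1)^2>r(r+2)$ and $(r+1)/(r+2)<1$. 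The main obstacle, which I expect to be genuinely delicate, is precisely this final linear-programming step in Case A and its companion cross-number estimate in Case B; both depend on an exhaustive atom classification in the relevant $G_0$ and on carefully pairing a shortest factorization with a longest one through the identity on $\mathsf{k}(B)$.
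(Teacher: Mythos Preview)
Your treatment of Part 2 is correct and essentially matches the paper. There are, however, two genuine gaps.

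\textbf{Part 1, upper bound.} Your route through an adaptation of Claim~A of Lemma~\ref{k} does not close. Even granting that every atom $A$ with $\mathsf{k}(A)>1$ satisfies $\mathsf{k}(A)\ge 1+d/n_1$, nothing you have written forces $\mathsf{k}(U_i)=1$ in a shortest factorization of an extremal $B$. The ``comparison in the style of Lemma~\ref{2.4}.4 applied to $B^{n_1}$'' only yields $\mathsf{k}(U_i)\ge 1$; augmenting by cross-number-one atoms $W_j$ and refactoring $U_{i_0}^{n_1}W_1\cdots W_s$ into cross-number-one atoms gives no improvement on either $\min\mathsf{L}$ or $\max\mathsf{L}$ beyond the trivial inequality $\ell\le k$. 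The paper avoids this entirely: since $d\ge r$ and $d\ge\lfloor n_r/2\rfloor$, Lemma~\ref{le2}.3 gives $d>\mathsf{m}(G)$, and then Lemma~\ref{2.4}.2 shows that any atom with $\mathsf{k}>1$ would produce a non-half-factorial LCN-set whose $\min\Delta$ is at most $\mathsf{m}(G)<d$, contradicting $d\mid\min\Delta(G_0)$. So no such atom exists, and $\mathsf{k}(U_i)\le 1$ follows in one line.

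\textbf{Part 3.} You compute $\rho(G_0')$ for the single set $G_0'=\{g,-g,e_1,\ldots,e_r\}$ and treat the outcome as the value of $\rho^*(G,r-1)$, but you never show that every $G_0$ with $\rho(G_0)>\mathsf{K}(G,r-1)$ must equal $G_0'$ up to the obvious choices. This is the structural heart of the proof and is not automatic: one must prove (a) that such a $G_0$ contains a pair $\{g,-g\}$ with $\ord(g)=n_r$, (b) that any $r$-subset $G_2\subset G_0\setminus\{g,-g\}$ with $ag\in\langle G_2\rangle$ is in fact a basis of $G\cong C_{n_r}^r$ with $g=\sum_{h\in G_2}h$, and (c) that $G_0$ contains nothing beyond $\{g,-g,e_1,\ldots,e_r\}$. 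The paper carries this out as Claim~B, relying on the classification of minimal non-half-factorial LCN-sets with $\min\Delta=\max\Delta^*(G)$ from \cite{Ge-Zh15a}. Your Case~B argument is likewise incomplete: the paper does not bound $\rho(G_0)$ directly by cross-number bookkeeping, but derives a contradiction from Claim~B (which forces $G\cong C_{n_r}^r$ and $\rho^*(G,r-1)<r$) together with Lemma~\ref{k}.3 (which gives $\mathsf{K}(G,r-1)>r$ when $n_1=n_r$ is not a prime power). Without the rigidity statement of Claim~B, neither Case~A nor Case~B goes through.
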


\begin{proof}
1. Since $d\ge \max\{\left\lfloor\frac{n_r}{2}\right\rfloor, \left\lfloor\frac{r+1}{2}\right\rfloor\}$ and $d\ge r$, it follows by  Lemma \ref{le2}(items 3. and 4.) that  $d>\mathsf m(G)$  and $d\in [n_r-s-1, n_r-2]$.

Let $e_1,\ldots, e_{n_r-d-1}$ be independent elements with order $n_r$ and let $e_0=-e_1-\ldots-e_{n_r-d-1}$.
Then $\min \Delta(\{e_0,e_1,\ldots,e_{n_r-d-1}\})=|n_r-(n_r-d-1)-1|=d$ by Lemma \ref{2.1}.2. Since $A=e_0e_1\ldots e_{n_r-d-1}$ is an atom, we infer $\mathsf L(A^{n_r})=\{n_r-d, n_r\}$. Therefore $$\rho^*(G, d)\ge\rho(\{e_0,e_1,\ldots,e_{n_r-d-1}\})\ge \rho(\mathsf L(A^{n_r}))=\frac{n_r}{n_r-d}\,.$$

Let $G_0\subset G$ with $d\t \min \Delta(G_0)$ be such that $\rho(G_0)=\rho^*(G,d)$. Then there exists $B\in \mathcal B(G_0)$ with $\rho(\mathsf L(B))=\rho^*(G,d)$. Set
$$B=U_1\ldots U_k=V_1\ldots V_{\ell},$$
where $k=\min \mathsf L(B)$, $\ell=\max\mathsf L(B)$, and $U_1,\ldots, U_k,V_1,\ldots, V_{\ell}\in \mathcal A(G_0)$ are atoms .

If there exists $i\in [1, k]$ such that $\mathsf k(U_i)>1$, then Lemma \ref{2.4}.2 implies that $\supp(U_i)$ is an LCN-set  and hence $\min\Delta(\supp(U_i))\le \mathsf m(G)$. Since  $$d\t \min \Delta(G_0)\t \min\Delta(\supp(U_i))\,,$$ we get a contradiction to $d>\mathsf m(G)$.
Therefore $\mathsf k(U_i)\le 1$ for all $i\in [1,k]$.

If there exists $i\in [1, \ell]$ such that $\mathsf k(V_i)<1$, then Lemma \ref{2.4}.1 implies that $\mathsf k(V_i)=\frac{n_r-d}{n_r}$.  Therefore $\mathsf k(V_i)\ge \frac{n_r-d}{n_r}$ for all $i\in [1,\ell]$.
It follows that $$k\ge \sum_{i=1}^k\mathsf k(U_i)=\mathsf k(B)=\sum_{i=1}^{\ell}\mathsf k(V_i)\ge \ell\frac{n_r-d}{n_r}\,.$$
 Then $\rho^*(G,d)=\rho(\mathsf L(B))=\frac{\ell}{k}\le\frac{n_r}{n_r-d}$ and hence $\rho^*(G,d)=\frac{n_r}{n_r-d}$.

2.
Let $G_0\subset G$ be such that $d\t\min \Delta(G_0)$ and $\rho(G_0)=\rho^*(G,d)$. If there exists an atom $A\in \mathcal A(G_0)$ such that $\mathsf k(A)<1$, then Lemma \ref{2.4}.1 implies that $\mathsf k(A)=\frac{n_r-d}{n_r}\le \frac{1}{n_r}$, a contradiction to $|A|\ge 2$. Thus $G_0$ is an LCN-set. Let $B\in \mathcal B(G_0)$ such that $\rho(\mathsf L(B))=\rho(G_0)=\rho^*(G,d)$. Then $$\min \mathsf L(B)\mathsf K(G,d)\ge \mathsf k(B)\ge \max\mathsf L(B)$$ which implies that $\rho^*(G,d)=\rho(\mathsf L(B))\le \mathsf K(G,d)$.

Let $G_0\subset G$ be such that $d\t\min \Delta(G_0)$ and $\mathsf K(G_0)=\mathsf K(G,d)$. Then there exists an atom $A\in \mathcal A(G_0)$ such that $\mathsf k(A)=\mathsf K(G,d)\ge 1$. Since $\{n_r, n_r\mathsf k(A)\}\subset \mathsf L(A^{n_r})$, we infer $$\rho^*(G,d)\ge\rho(G_0)\ge \rho(\mathsf L(A^{n_r}))\ge \mathsf k(A)=\mathsf K(G,d)$$ and hence $\rho^*(G,d)=\mathsf K(G,d)$.

3.
Let $r=n_r-1\ge 3$ and we proceed to prove the following claim.

\medskip
\noindent{\bf Claim B:}
{\it Suppose $\rho^*(G, r-1)>\mathsf K(G, r-1)$ and let  $G_0\subset G$  be  such that $(r-1)\t \min \Delta(G_0)$ and $\rho(G_0)>\mathsf K(G, r-1)$. 
\begin{enumerate}
\item[{\bf a.}] There exists $g\in G_0$ with $\ord(g)=n_r$ such that $-g\in G_0$.

\item[{\bf b.}] Let $G_2\subset G_0\setminus \{g,-g\}$ with $|G_2|=r$. If  there exists $a\in [1,n_r-1]$ such that $ag\in \langle G_2\rangle$, then $g=\sum_{h\in G_2}h$, $G\cong C_{n_r}^r$, and $G_2$ is a basis of $G$.

\item[{\bf c.}] $G\cong C_{n_r}^r$, $G_0=\{e_1,\ldots, e_r, g, -g\}$, where $g=e_1+\ldots+e_r$ and $(e_1, \ldots, e_r)$ is a basis of $G$,  and $\rho(G_0)=1+\frac{n_r(r-1)}{n_r+1}$. In particular, $\rho^*(G,r-1)=1+\frac{n_r(r-1)}{n_r+1}$.
\end{enumerate}
}

\smallskip
\noindent{\it Proof of {\bf Claim B.} } By Lemma \ref{le2}.2, we infer that $\min\Delta(G_0)=r-1=n_r-2=\max\Delta^*(G)$.

{\bf a.}  
If $G_0$ is an LCN-set, then for every $B\in \mathcal B(G_0)$, we have $$\min\mathsf L(B)\mathsf K(G, r-1)\ge \mathsf k(B)\ge \max\mathsf L(B)$$
 which implies that $\rho(\mathsf L(B))\le \mathsf K(G, r-1)$. Therefore  $\rho(G_0)\le \mathsf K(G, r-1)$, a contradiction. Thus there exists $A\in \mathcal A(G_0)$ such that $\mathsf k(A)<1$. Lemma \ref{2.4}.1 implies that $A=g(-g)$ for some $g\in G_0$ with $\ord(g)=n_r$. Hence $\{g,-g\}\subset G_0$.

\smallskip
{\bf b.} Let $E\subset G_2$ be minimal such that there exists $a\in [1, n_r-1]$ such that $ag\in \langle E\rangle$ and  let $d_g\in [1, n_r-1]$ be minimal such that  $d_gg\in \langle E\rangle$.
Then there exists an atom $V\in \mathcal A(E\cup\{g\})$ with $\mathsf v_g(A)=d_g$ and  $|\supp(V)|\le r+1$.
Let $V=g^{d_g}T$, where $T\in \mathcal F(E)$. Then $$V(-g)^{n_r}=(g(-g))^{d_g}((-g)^{n_r-d_g}T)\,, \text{ where } \mathsf L((-g)^{n_r-d_g}T)\subset [1, n_r-d_g]\,.$$
Note that for each $\ell\in \mathsf L((-g)^{n_r-d_g}T)$, we have $r-1\t d_g+\ell-2$. Therefore  $\mathsf L((-g)^{n_r-d_g}T)=\{n_r-d_g\}$ or ($d_g=1$ and $\mathsf L((-g)^{n_r-1}T)=\{1\}$). We distinguish two cases.

Suppose $\mathsf L((-g)^{n_r-d_g}T)=\{n_r-d_g\}$. Let $T=T_1\ldots T_{n_r-d_g}$ such that $(-g)T_i$, $i\in [1, n_r-d_g]$, are atoms, where $T_1, \ldots, T_{n_r-d_g}\in \mathcal F(E)$. Thus $-g\in \langle E\rangle$ which implies that $d_g=1$  by
the minimality of $d_g$. The minimality of $E$ implies that $\supp(T_i)=E$ for each $i\in [1, n_r-1]$. Then  for every $h\in E$, $\mathsf v_h(V)\ge n_r-1$. Therefore  $\ord(h)=n_r$ and  $\mathsf v_h(V)= n_r-1$. It follows that  $$T_1=\ldots =T_{n_r-1}=\prod_{h\in E}h\,.$$
If $\mathsf k((-g)T_1)<1$, then Lemma \ref{2.4}.1 implies that $T_1=g$, a contradiction. Therefore $$\mathsf k((-g)T_1)=\frac{1}{n_r}+\sum_{h\in E}\frac{1}{\ord(h)}=\frac{1+|E|}{n_r}\ge 1\,.$$ Since $|E|\le |G_2|\le r$ and $r=n_r-1$,
we have $|E|=|G_2|=r$. Let $G_2=\{e_1\ldots, e_r\}$. Then $V=ge_1^{n_r-1}\ldots e_r^{n_r-1}$ implies that $(e_1, \ldots , e_r)$ is a basis of $G$, $G\cong C_{n_r}^r$, and $g=e_1+\ldots+e_r$.

Suppose $d_g=1$ and $\mathsf L((-g)^{n_r-1}T)=\{1\}$. Note that $V=gT$ and hence $|T|\ge 2$. We infer 
 $\mathsf k((-g)^{n_r-1}T)>1$. It follows by Lemma \ref{2.4}.2 that $\{-g\}\cup E$ is an LCN-set and $\min \Delta(\{-g\}\cup E)\le |E|-1$. Since $(r-1)\t \min \Delta(\{-g\}\cup E)$ and $|E|\le |G_2| =r$, we have 
$$|E|=|G_2|=r \text{ and } \min \Delta(\{-g\}\cup G_2)=r-1\,.$$
Let $E_1\subset \{-g\}\cup G_2$ be a minimal non-half-factorial LCN-set. Then $$\min\Delta(E_1)=\min \Delta(\{-g\}\cup G_2)=r-1=\max\Delta^*(G)\,.$$
It follows by \cite[Lemma 4.2]{Ge-Zh15a} that $|E_1|=r+1$ and hence $E_1=\{-g\}\cup G_2$. Therefore 
\[\{-g\}\cup G_2 \text{ is a minimal non-half-factorial LCN-set.} \]
 Let $G_2=\{e_1, \ldots, e_r\}$ and assume 
 $$V=ge_1^{k_1}\ldots e_r^{k_r},\ \  V_1=(-g)^{n_r-1}e_1^{k_1}\ldots e_r^{k_r}\,,$$
  where $k_i\in [1, \ord(e_i)-1]$ for each $i\in [1,r]$.
  
 If $\mathsf k(V)>1$, then Lemma \ref{2.4}.2 and \cite[Lemma 4.5]{Ge-Zh15a} imply 
 \[\{g\}\cup G_2 \text{ is a  minimal non-half-factorial LCN-set with } \min \Delta(\{g\}\cup G_2)=\max\Delta^*(G)=r-1\,.\]
 By the minimality of $E=G_2$, we have for every $m\in [1, n_r-1]$ and every  $h\in G_2$, $mg\not\in \langle G_2\setminus\{h\}\rangle$. Note that $n_r\ge 4$.
Let $I=\{i\in [1,r]\mid k_i\ge \frac{\ord(e_i)}{2}\}$. Then \cite[Lemma 4.4.1]{Ge-Zh15a} implies that $$W_1=g^2\prod_{i\in I}e_i^{2k_i-\ord(e_i)}\prod_{i\in[1,r]\setminus I}e_i^{2k_i}$$ and $$W_2=(-g)^{n_r-2}\prod_{i\in I}e_i^{2k_i-\ord(e_i)}\prod_{i\in[1,r]\setminus I}e_i^{2k_i}$$ are both atoms. Since $V^2=\prod_{i\in I}e_i^{\ord(e_i)}W_1$ and $V_1^2=(-g)^{n_r}\prod_{i\in I}e_i^{\ord(e_i)}W_2$, we infer $r-1\t |I|-1$ and $r-1\t |I|$.  Hence $r=2$, a contradiction to $r\ge 3$.

 Thus $\mathsf k(V)=1$ which implies that $k_1=\ldots=k_r=1$ and $\ord(e_i)=n_r$ for each $i\in [1,r]$. It follows by $\mathsf k(V_1)>1$ and \cite[Lemma 4.3.2]{Ge-Zh15a} that $(-g)e_1^{n_r-1}\ldots e_r^{n_r-1}$ is also an atom. Therefore $e_1,\ldots, e_r$ are independent and  $g=e_1+\ldots+e_r$. Since $\mathsf r(G)=r$ and $\exp(G)=n_r$, we infer $(e_1,\ldots, e_r)$ is a basis of $G$ and $G\cong C_{n_r}^r$.

\medskip
{\bf c.} If for all $W\in \mathcal A(G_0)$, $\mathsf k(W)\le 1$, then for every $B\in \mathcal B(G_0)$, we have $$\min \mathsf L(B)\ge \mathsf k(B)\ge \max\mathsf L(B)\frac{2}{n_r}$$ which implies that $\rho(G_0)\le \frac{n_r}{2}$. It follows by Lemma \ref{3.2}  that $\mathsf K(G, r-1)\ge 1+\frac{r-1}{2}=\frac{n_r}{2}$, a contradiction to $\rho(G_0)>\mathsf K(G, r-1)$. 

Let $W\in \mathcal A(G_0)$ with $\mathsf k(W)> 1$. Then $\supp(W)$ is an LCN-set with $\min \Delta(\supp(W))=r-1$ by Lemma \ref{2.4}.2. Let $G_1\subset \supp(W)$ be a minimal non-half-factorial subset. Then $\min \Delta(G_1)=r-1$ and hence $|G_1|=r+1$ by \cite[Lemma 4.2.1]{Ge-Zh15a}.  Since $\{g,-g\}\not\subset \supp(W)$, we choose $h\in G_1$ such that $\{g,-g\}\cap (G_1\setminus\{h\})=\emptyset$.  Lemma \ref{2.4}.3 implies $\mathsf r(\langle G_1\setminus\{h\}\rangle)=r$.
Thus there exists $a\in [1, n_r-1]$ such that $ag\in \langle G_1\setminus\{h\}\rangle$. Since $|G_1\setminus\{h\}|=r$, 
it follows by {\bf a.} and {\bf b.} that $G\cong C_{n_r}^r$ and  there exists a basis $(e_1,\ldots, e_r)$ of $G$ such that $g=e_1+\ldots+e_r$ and $\{e_1,\ldots, e_r,g,-g\}\subset G_0$.

Assume to the contrary that there exists $h_0\in G_0\setminus \{e_1,\ldots, e_r,g,-g\}$. After renumbering if necessary, we may assume that $h_0=k_1e_1+\ldots+k_te_t$, where $t\in [1,r]$,  $k_i\in[1, n_r-1]$ for each $i\in[1,t]$ and $k_1=\min \{k_1,\ldots k_t\}$. Thus $k_1g\in \langle \{h_0, e_2,\ldots, e_r\}\rangle$. It follows by {\bf b.} that $g=h_0+e_2+\ldots+e_r$ and hence $h_0=e_1$, a contradiction. Therefore $G_0=\{e_1,\ldots, e_r, g, -g\}$.

We only need to prove $\rho(G_0)=1+\frac{n_r(r-1)}{n_r+1}$ which immediately implies that $\rho^*(G,r-1)=1+\frac{n_r(r-1)}{n_r+1}$.

Since $(ge_1^{n_r-1}\ldots e_r^{n_r-1})^{n_r}(-g)^{n_r}=(g(-g))^{n_r}(e_1^{n_r})^{n_r-1}\ldots (e_r^{n_r})^{n_r-1}$, we obtain $$\rho(G_0)\ge \frac{n_r+r(n_r-1)}{n_r+1}=1+\frac{n_r(r-1)}{n_r+1}\,.$$
Let $B\in \mathcal B(G_0)$ such that $\rho(\mathsf L(B))=\rho(G_0)$ and assume that
\[
B=U_1\ldots U_k=V_1\ldots V_{\ell}\,,\text{ and } \frac{\ell}{k}=\rho(G_0)\,,
\]where $k,\ell\in\N$ and $U_1,\ldots, U_k, V_1,\ldots, V_{\ell}$ are atoms.

Note that $\mathcal A(G_0)=\mathcal A(G_0\setminus\{-g\})\cup\{g(-g), (-g)e_1\ldots e_r, (-g)^{n_r}\}$. By Lemma \ref{2.4}.4, we have  $\mathsf k(U_i)\ge 1$ for all $i\in [1,k]$ and $\mathsf k(V_j)\le 1$ for all $j\in [1, \ell]$. Since $((-g)e_1\ldots e_r)^{n_r}=(-g)^{n_r}\prod_{i\in[1,r]}e_i^{n_r}$  and $\rho(B^{n_r})=\rho(G_0)$, substituting $B$ by $B^{n_r}$, if necessary, we can assume that $U_i\neq (-g)e_1\ldots e_r$, $V_j\neq (-g)e_1\ldots e_r$ for each $i\in [1,k]$ and each $j\in[1,\ell]$.

Since there must  exist $j_0\in [1,\ell]$ such that $V_{j_0}=g(-g)$, there must exists $i_0\in [1,k]$ such that
$U_{i_0}=(-g)^{n_r}$ which implies that $V_j\neq (-g)^{n_r}$ for all $j\in [1,\ell]$.
If  there exists $i_1\in [1,k]$ such that  $U_{i_1}= g^{n_r}$, then $\max\mathsf L(B)=n_r+\max\mathsf L(B(U_{i_0}U_{i_1})^{-1})$ which implies that
$$\rho(\mathsf L(B))=\frac{n_r+\max\mathsf L(B(U_{i_0}U_{i_1})^{-1})}{2+\min\mathsf L(B(U_{i_0}U_{i_1})^{-1})}=\rho(G)\,.$$ Therefore $$\rho(\mathsf L(B))=\frac{n_r}{2}=\rho(\mathsf L(B(U_{i_0}U_{i_1})^{-1}))\,.$$
It follows by Lemma \ref{3.2}  that $\mathsf K(G, r-1)\ge 1+\frac{r-1}{2}=\frac{n_r}{2}$, a contradiction to $\rho(G_0)>\mathsf K(G, r-1)$.
If  there exists $j_1\in [1,\ell]$ such that  $V_{j_1}= g^{n_r}$, then $\max\mathsf L(B(-g)^{n_r})=n_r+\ell-1$ which implies that
$$\rho(\mathsf L(B(-g)^{n_r}))\ge \frac{n_r+\ell-1}{k}>
\frac{\ell}{k}=\rho(G_0)\,,$$ a contradiction.

To sum up, we obtain $$\{U_i\mid i\in[1,k]\}\subset \mathcal (A(G_0\setminus\{-g\})\setminus\{g^{n_r}\})\cup\{(-g)^{n_r}\}$$ and $$\{V_j\mid j\in[1,\ell]\}\subset \mathcal A(G_0\setminus\{g, -g\})\cup\{(-g)g\}\,.$$
Let $I_1=\{i\in[1,k]\mid U_i=(-g)^{n_r}\} $, $I_2=\{i\in[1,k]\mid \mathsf v_g(U_i)\ge 1\} $, and  $J=\{j\in [1,\ell]\mid V_j=g(-g)\}$. 
Then
 $$n_r|I_1|=|J|=\mathsf v_{-g}(B)=\mathsf v_{g}(B)=\sum_{i\in I_2}\mathsf v_g(U_i)\ge |I_2|$$ and
\begin{align*}
\frac{2}{n_r}|J|+\ell-|J|=\mathsf k(B)=&\sum_{i\in I_2}\mathsf k(U_i)+k-|I_2|\\
            =&\sum_{i\in I_2}(1+\frac{n_r-\mathsf v_g(U_i)}{n_r}(n_r-2))+k-|I_2|\\
            =& k+(n_r-2)|I_2|-\frac{n_r-2}{n_r}\sum_{i\in I_2}\mathsf v_g(U_i)\\
            =& k+(n_r-2)|I_2|-\frac{|J|(n_r-2)}{n_r}\,.
\end{align*}
Therefore $k\ge |I_1|+|I_2|\ge \frac{n_r+1}{n_r}|I_2|$ and $\frac{\ell}{k}=1+\frac{|I_2|}{k}(n_r-2)\le 1+\frac{n_r(n_r-2)}{n_r+1}$. It follows that $\rho(G_0)=1+\frac{n_r(r-1)}{n_r+1}$.
\qed[Proof of {\bf Claim B.}]

\medskip
We distinguish two cases to finish the proof.

Suppose that $G\cong C_{p^k}^{p^k-1}$ for some  prime $p$ and $k\in \N$ with $p^k\ge 4$. Then $\mathsf K(G, p^k-2)=1+\frac{(p^k-1)(p^k-2)}{p^k}$ by Lemma \ref{k}.2.
Let $(e_1,\ldots , e_{p^k-1})$ be a basis of $G$ and $G_0=\{e_1,\ldots , e_{p^k-1}, g, -g\}$, where $g=\sum_{i\in[1,p^k-1]}e_i$. By Lemma \ref{2.1}.3, we have 
$p^k-2=\min \Delta(G_0)$. 
 Since $$(ge_1^{p^k-1}\ldots e_r^{p^k-1})^{p^k}(-g)^{p^k}=(g(-g))^{p^k}(e_1^{p^k})^{p^k-1}\ldots (e_r^{p^k})^{p^k-1}\,,$$ we obtain  $$\rho^*(G, p^k-2)\ge \rho(G_0)\ge \frac{p^k+(p^k-1)r}{p^k+1} =1+\frac{p^k(p^k-2)}{p^k+1}>\mathsf K(G, p^k-2)\,.$$
  It follows by {\bf Claim B.} that $$\mathsf K(G, p^k-2)<\rho^*(G, p^k-2)=1+\frac{p^k(p^k-2)}{p^k+1}<p^k-1\,.$$

Suppose that  $G\not\cong C_{p^k}^{p^k-1}$ for any  prime $p$ and any $k\in \N$. Assume to the contrary that $\mathsf K(G, r-1)<\rho^*(G, r-1)$. Then {\bf Claim B.} implies that $G\cong C_{n_r}^r$ and $\rho(G, r-1)<r$. Since $n_r$ is not prime power, it follows by Lemma \ref{k}.3  that $\mathsf K(G, r-1)>r$, a contradiction.
\end{proof}

\bigskip
\section{Proof of Main Theorems}\label{4}
\bigskip
 From now on, we assume $G'$ is a further finite abelian group with
 $G'\cong C_{n_1'}\oplus \ldots \oplus C_{n_{r'}'}$, where $r', n_1',\ldots, n_{r'}'\in \N$ and $1<n_1'\t \ldots \t n_{r'}'$. For convenience, we collect some necessary results which will be used all through the following two sections without further mention.

\begin{lemma}\label{4.1}
Suppose  $\mathcal L(G)=\mathcal L(G')$ and $d\in \Delta_1(G)$. 
Then \begin{enumerate}
\item If $G$ is isomorphic to a subgroup of $G'$, then $G\cong G'$.
\item $\max\{\mathsf r(G)-1,\exp(G)-2\}=\max\{\mathsf r(G')-1,\exp(G')-2\}$.
\item $d\in \Delta_1(G')$ and  $\rho^*(G,d)=\rho^*(G',d)$.
\end{enumerate}
\end{lemma}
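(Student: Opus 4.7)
The plan is as follows. Assertion (3) is a direct restatement of Proposition \ref{3.5}, so nothing further is required there.

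For (2), the point to observe is that the invariant $\Delta_1(G)$ is defined purely in terms of $\mathcal L(G)$ -- it is precisely the set of $d\in\N$ for which AAPs of arbitrarily large length with difference $d$ appear in $\mathcal L(G)$. Consequently the hypothesis $\mathcal L(G)=\mathcal L(G')$ forces $\Delta_1(G)=\Delta_1(G')$, hence $\max\Delta_1(G)=\max\Delta_1(G')$. Applying Lemma \ref{le2}.2 to each side rewrites it as $\max\{\mathsf r(\cdot)-1,\exp(\cdot)-2\}$, which is exactly (2).

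The only part of the lemma requiring a genuine argument is (1), and my approach is to reduce it to the Davenport constant. By Lemma \ref{2.3}.4 we have $\rho(G)=\mathsf D(G)/2$, and since the elasticity is manifestly determined by $\mathcal L(G)$, the hypothesis yields $\mathsf D(G)=\mathsf D(G')$. Now suppose $G$ is isomorphic to a subgroup $H\le G'$, so that $\mathsf D(H)=\mathsf D(G)=\mathsf D(G')$. The classical inequality $\mathsf D(G')\ge \mathsf D(H)+\mathsf D(G'/H)-1$ (see \cite{Ge-HK06a}) implies that whenever $H$ is a proper subgroup of $G'$ one has $\mathsf D(G'/H)\ge 2$, and therefore $\mathsf D(H)\le \mathsf D(G')-1$, a contradiction. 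Hence $H=G'$, i.e., $G\cong G'$.

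No serious obstacle is expected: the whole lemma is essentially a bookkeeping exercise collecting the length-theoretic invariance of $\Delta_1$, of $\rho$ and $\rho^*$, and of $\mathsf D$, combined with the standard fact that the Davenport constant strictly increases under proper subgroup inclusion. The only subtle step is the appeal to the subgroup--quotient inequality for $\mathsf D$, but that is already a well-known tool cited from \cite{Ge-HK06a}.
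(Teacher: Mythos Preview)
Your proposal is correct and follows essentially the same route as the paper. For (1) the paper likewise deduces $\mathsf D(G)=\mathsf D(G')$ from $\mathcal L(G)=\mathcal L(G')$ (citing \cite[Proposition 7.3.1.3]{Ge-HK06a}) and then invokes \cite[Propositions 5.1.3.2 and 5.1.11.1]{Ge-HK06a}, the latter being precisely the subgroup--quotient inequality you spell out; for (2) the paper cites the same ingredients packaged in Lemma~\ref{le2}.2, and (3) is identical.
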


\begin{proof}
1. By \cite[Proposition 7.3.1.3]{Ge-HK06a}, we have $\mathsf D(G)=\mathsf D(G')$.  It follows from \cite[Proposition 5.1.3.2 and 5.1.11.1]{Ge-HK06a} that $G\cong G'$.

2. follows from \cite[Corollary 4.3.16]{Ge-HK06a} and \cite[Theorem 1.1.3]{Ge-Zh15a}.

3. See Proposition \ref{3.5}. 
\end{proof}

\begin{theorem}\label{th1}
 Suppose $\mathcal L(G)=\mathcal L(G')$. Then
\begin{enumerate}
\item If $r\ge n_r-1$ and $n_1\neq 2$, then $\mathsf r(G)=\mathsf r(G')\ge \exp(G')-1$.

\item If $r\ge n_r-1$, $n_1\neq 2$, and $n_r$ is a prime power , then $\mathsf r(G)=\mathsf r(G')$ and $n_1=n_1'$.

\item If $r\le n_r-3$, then $\exp(G)=\exp(G')$ and $\mathsf r(G')\le n_r-3$.
\item If $\lfloor\frac{n_r}{2}\rfloor+1\le r\le n_r-3$, then $\exp(G)=\exp(G')$ and $\mathsf r(G)=\mathsf r(G')$.

\item If $ r\le n_r-3$ and $\Delta^*(G)$ is an interval, then $\exp(G)=\exp(G')$ and $\mathsf r(G)=\mathsf r(G')$.
\end{enumerate}
\end{theorem}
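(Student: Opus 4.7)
The plan is to exploit Lemma \ref{4.1} thoroughly: the identity $\mathcal L(G)=\mathcal L(G')$ yields $\mathsf D(G)=\mathsf D(G')$, the max-identity $\max\{\mathsf r(G)-1,\exp(G)-2\}=\max\{\mathsf r(G')-1,\exp(G')-2\}$, $\Delta_1(G)=\Delta_1(G')$, and, crucially, $\rho^*(G,d)=\rho^*(G',d)$ for every $d\in\Delta_1(G)$. I would then substitute well-chosen $d$ (mainly $d=r-1$ for Parts 1 and 2, and $d=n_r-2$ for Parts 3--5) and use Proposition \ref{P-rho} together with Lemma \ref{k} to compute both sides in closed form.

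For Part 1, the hypothesis $r\ge n_r-1$ makes $\max\Delta_1(G)=r-1$, so $\mathsf r(G')\le r$ and $\exp(G')\le r+1$. If $\mathsf r(G')<r$, then $\exp(G')=r+1$ and $d=r-1\ge\mathsf r(G')$, so Proposition \ref{P-rho}.1 applied to $G'$ gives $\rho^*(G',r-1)=(r+1)/2$, while Lemma \ref{3.2} with $n_1\ge 3$ gives
\[
\rho^*(G,r-1)\ge 1+\frac{(n_1-1)(r-1)}{n_1}\ge 1+\frac{2(r-1)}{3}>\frac{r+1}{2}\,,
\]
a contradiction. So $\mathsf r(G')=r$, and the bound $\exp(G')\le r+1$ gives Part 1. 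For Part 2, knowing $\mathsf r(G')=r$ and that $n_r$ is a prime power, Proposition \ref{P-rho}.2--3 together with Lemma \ref{k}.2 express $\rho^*(G,r-1)$ explicitly as $1+\tfrac{(n_1-1)(r-1)}{n_1}$ in the generic case (and $1+\tfrac{n_r(r-1)}{n_r+1}$ if $G\cong C_{n_r}^{n_r-1}$); the identical formulas for $G'$ together with the divisibilities $n_1\mid n_r$ and $n_1'\mid\exp(G')$ rule out cross-matching and force $n_1=n_1'$.

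For Parts 3--5, $r\le n_r-3$ gives $\max\Delta_1(G)=n_r-2$, so $\mathsf r(G')\le n_r-1$ and $\exp(G')\le n_r$. The crucial computation is $\rho^*(G,n_r-2)=n_r/2$: the lower bound uses $G_0=\{g,-g\}$ with $\ord(g)=n_r$ (giving $\mathsf L((g(-g))^{n_r})=\{2,n_r\}$), and the upper bound is obtained by mimicking the proof of Proposition \ref{P-rho}.1, for which the needed inequality $n_r-2>\mathsf m(G)$ follows from Lemma \ref{le2}.3 and $r\le n_r-3$. For Part 3, suppose $\exp(G')<n_r$; then $\mathsf r(G')=n_r-1\ge\exp(G')$, so by Lemma \ref{le2}.5 $\Delta^*(G')$ is the interval $[1,n_r-2]$ and hence $\Delta_1(G')=[1,n_r-2]$. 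Via $\Delta_1(G)=\Delta_1(G')$ this makes $\Delta_1(G)$ an interval, and a second application of Lemma \ref{le2}.5 forces $G$ into the ``dense'' regime $\mathsf r(G)+k\ge n_r-1$ (or $G\cong C_{2r+2}^r$); outside this regime we already have a contradiction, while inside it Part 5 applies and gives $\exp(G)=\exp(G')=n_r$, contradicting $\exp(G')<n_r$. Hence $\exp(G')=n_r$. The bound $\mathsf r(G')\le n_r-3$ follows by the same mechanism: any $\mathsf r(G')\in\{n_r-2,n_r-1\}$ together with $\exp(G')=n_r$ again makes $\Delta^*(G')$ an interval, and Part 5 then yields $\mathsf r(G)=\mathsf r(G')$, which is at most $n_r-3$ by hypothesis on $G$.

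Part 4 uses $r\ge\lfloor n_r/2\rfloor+1$ to make Proposition \ref{P-rho} applicable to both $G$ and $G'$ at $d=r-1$; matching the resulting $\rho^*$ values pins down $\mathsf r(G')=r$. Part 5 exploits the interval structure of $\Delta^*(G)$ (equivalently $\Delta_1(G)=[1,n_r-2]$) and combines it with $\exp(G)=\exp(G')$ from Part 3 plus a $\rho^*$ comparison at $d=r-1$ to recover $\mathsf r(G')=r$. The chief obstacle is the exclusion of $\mathsf r(G')=n_r-2$ under $\exp(G')=n_r$ in Part 3: Proposition \ref{P-rho}.1 applied to both $G$ and $G'$ then yields the common value $n_r/2$ for $\rho^*(\cdot,n_r-2)$, so this single invariant cannot separate them and one must pass through the $\Delta_1$-interval argument and bootstrap via Part 5.
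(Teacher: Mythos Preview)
Your plan for Parts 1 and 2 follows the paper's line and is correct. The serious problem is a circular dependency between your arguments for Parts~3 and~5.

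For Part~3 you suppose $\exp(G')<n_r$ (hence $\mathsf r(G')=n_r-1$), deduce that $\Delta^*(G')$ and therefore $\Delta^*(G)$ are intervals, and then invoke Part~5 to obtain $\exp(G)=\exp(G')$, a contradiction. But your own proof of Part~5 (like the paper's) begins by quoting Part~3 to get $\exp(G')=n_r$. So Part~3 calls Part~5, which calls Part~3. The same bootstrap recurs in your argument that $\mathsf r(G')\le n_r-3$; that second use would be legitimate once Part~5 is established, but as written Part~5 still rests on Part~3.

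The paper avoids this entirely by taking $d=n_r-3$ (not $n_r-2$) in Part~3. If $\mathsf r(G')\ge n_r-2$, then $d=n_r-3\in[1,\mathsf r(G')-1]$, and Lemma~\ref{3.2} yields $\rho^*(G',d)\ge 1+d/2$. On the $G$ side, $r\le n_r-3=d$ and $d\ge\lfloor n_r/2\rfloor$, so Proposition~\ref{P-rho}.1 gives $\rho^*(G,d)=n_r/3<1+(n_r-3)/2$, a direct contradiction with no appeal to Part~5. The max-identity then forces $\exp(G')=n_r$. Your computation $\rho^*(G,n_r-2)=n_r/2$ is correct but cannot separate $G$ from a $G'$ with $\mathsf r(G')=n_r-1$: Lemma~\ref{3.2} only gives $\rho^*(G',n_r-2)\ge n_r/2$, and equality is not ruled out, which is exactly why you were forced into the interval detour.

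A smaller gap: your Part~4 only mentions $d=r-1$. That choice handles $\mathsf r(G')<r$ (Proposition~\ref{P-rho}.1 on $G'$, Lemma~\ref{3.2} on $G$), but if $\mathsf r(G')>r$ then $d=r-1<\mathsf r(G')-1$ and Proposition~\ref{P-rho}.1 does not apply to $G'$. The paper treats that case with $d=r$, reversing the roles: Lemma~\ref{3.2} on $G'$ gives $\rho^*(G',r)\ge 1+r/2$, while Proposition~\ref{P-rho}.1 on $G$ gives $\rho^*(G,r)=n_r/(n_r-r)<1+r/2$.
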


\begin{proof} 

1. Assume to the contrary that $\mathsf r(G')\neq\mathsf r(G)=r$. Since $r-1=\max\{\mathsf r(G)-1, \exp(G)-2\}=\max\{\mathsf r(G')-1, \exp(G')-2\}$, it follows that  $\mathsf r(G')-1<\exp(G')-2=r-1$.
Let $d=r-1$. Then $d\in \Delta_1(G)=\Delta_1(G')$. Since $\rho^*(G, d)\ge 1+\frac{(n_1-1)d}{n_1}$ by Lemma \ref{3.2} and $\rho^*(G', d)=\frac{\exp(G')}{2}$ by Proposition \ref{P-rho}.1,  it follows that $\rho^*(G', d)=\frac{\exp(G')}{2}=\rho^*(G, d)\ge 1+\frac{(n_1-1)(\exp(G')-2)}{n_1}$, a contradiction to $n_1\neq 2$. Thus $r-1=\mathsf r(G')-1=\max\{\mathsf r(G')-1, \exp(G')-2\}$ and hence $\mathsf r(G')\ge \exp(G')-1$.

\smallskip
2. Note that $r\ge n_r-1\ge 2$. If $r=2$, then $G\cong C_3\oplus C_3$. Therefore $\mathsf D(G)=\mathsf D(G')=5$ and $\max\{3-2, 2-1\}=\max\{\exp(G')-2, \mathsf r(G')-1\}$. It follows that $G\cong G'$. Thus we can assume $r\ge 3$.

By 1., we have $\mathsf r(G)=\mathsf r(G')\ge \exp(G')-1$. Since $n_r$ is a prime power and $n_r\ge 3$, it follows by Lemma \ref{k}.2 that $\mathsf K(G, r-1)=1+\frac{n_r(r-1)}{n_r}<r$.
Then $r\ge 3$ and Propositions \ref{P-rho}.2 and \ref{P-rho}.2 imply that $\rho^*(G, r-1)<r$ and hence $\rho^*(G', r-1)=\rho^*(G,r-1)<r$. Therefore  $$1+\frac{(n_1'-1)(r-1)}{ n_1'}\le \mathsf K(G', r-1)\le \rho^*(G', r-1)<r$$ by Lemma \ref{3.2}.  It follows by Lemma \ref{k}.1 that $\mathsf K(G', r-1)=1+\frac{(n_1'-1)(r-1)}{n_1'}$.

If $\mathsf K(G, r-1)=\rho^*(G, r-1)$ and  $\mathsf K(G', r-1)=\rho^*(G', r-1)$,  then $1+\frac{(n_r-1)(r-1)}{n_r}=1+\frac{(n_1'-1)(r-1)}{n_1'}$ which infers $n_1=n_r=n_1'$.

If $\mathsf K(G, r-1)<\rho^*(G, r-1)$ and  $\mathsf K(G', r-1)=\rho^*(G', r-1)$,  then $G\cong C_{n_r}^r$, $r=n_r-1$, and $\rho^*(G, r-1)=1+\frac{(n_r)(r-1)}{n_r+1}$ by Proposition \ref{P-rho}.3. Therefore $1+\frac{(n_r)(r-1)}{n_r+1}=1+\frac{(n_1'-1)(r-1)}{n_1'}$ which infers $r+2=n_r+1=n_1'\le \exp(G')$, a contradiction.

If $\mathsf K(G, r-1)=\rho^*(G, r-1)$ and  $\mathsf K(G', r-1)<\rho^*(G', r-1)$,  then $G'\cong C_{\exp(G')}^r$, $r=\exp(G')-1$,  and $\rho^*(G', r-1)=1+\frac{\exp(G')(r-1)}{\exp(G')+1}$ by Proposition \ref{P-rho}.3. Therefore
$1+\frac{(n_1-1)(r-1)}{n_1}=1+\frac{( \exp(G'))(r-1)}{ \exp(G')+1}$ which infers $n_r\ge n_1= \exp(G')+1=r+2$, a contradiction.

If $\mathsf K(G, r-1)<\rho^*(G, r-1)$ and  $\mathsf K(G', r-1)<\rho^*(G', r-1)$,  then $G\cong C_{r+1}^r\cong G'$ by Proposition \ref{P-rho}.3.

\smallskip
3. Note that $n_r\ge 4$.
Assume to the contrary that $\mathsf r(G')\ge n_r-2$. Let $d=n_r-3\in [1, \mathsf r(G')-1]$. Therefore  $d\in \Delta_1(G')=\Delta_1(G)$ and Lemma \ref{3.2} implies that $\rho^*(G',d)\ge 1+\frac{d}{2}$. Since $d\ge \max\{r, \lfloor n_r/2\rfloor\}$, Proposition \ref{P-rho}.1 implies that $\rho^*(G, d)=n_r/3< 1+\frac{d}{2}$, a contradiction to  $\rho^*(G,d)=\rho^*(G', d)$.

 Thus $\mathsf r(G')\le n_r-3$. Since $n_r-2=\max\{\mathsf r(G)-1, \exp(G)-2\}=\max\{\mathsf r(G')-1, \exp(G')-2\}$, it follows that  $n_r-2=\exp(G')-2$.

4. By 3, $\exp(G)=\exp(G')$  and  $\mathsf r(G')\le \exp(G')-3$. Assume to the contrary that $\mathsf r(G)\neq \mathsf r(G')$.

Suppose that $\mathsf r(G)>\mathsf r(G')$.
 Choose $d=r-1$.  Therefore $d\in \Delta_1(G)=\Delta_1(G')$ and Lemma \ref{3.2} implies that $\rho(G,d)\ge 1+\frac{r-1}{2}$. Since $d\ge \max\{\mathsf r(G'), \lfloor \exp(G')/2\rfloor\}$, Proposition \ref{P-rho}.1 implies that $\rho(G', d)=\frac{\exp(G')}{\exp(G')-d}\le \frac{n_r}{4}<1+\frac{r-1}{2}$, a contradiction to  $\rho^*(G,d)=\rho^*(G', d)$.

Suppose that $\mathsf r(G)<\mathsf r(G')$. Choose $d=r\in [1,\mathsf r(G')-1]$. Then $d\in \Delta_1(G')=\Delta_1(G)$ and Lemma  \ref{3.2} implies that $\rho^*(G',d)\ge 1+\frac{r}{2}$. Since $d\ge \max\{r, \lfloor n_r/2\rfloor\}$, Proposition \ref{P-rho}.1 implies that $\rho^*(G, d)=\frac{n_r}{n_r-r}=1+\frac{r}{n_r-r}<1+\frac{r}{2}$, a contradiction to  $\rho^*(G,d)=\rho^*(G', d)$.

5. By 3, we have $\mathsf r(G')+3\le \exp(G')=n_r$ and by 4,  we can assume that $r\le \lfloor\frac{n_r}{2}\rfloor$ and $\mathsf r(G')\le \lfloor\frac{n_r}{2}\rfloor$. Since $\Delta^*(G)$ is an interval, we obtain that $\Delta^*(G')=\Delta_1(G')=\Delta_1(G)=\Delta^*(G)$ is an interval by Lemma \ref{le2}.5. Let $k$ be maximal such that there exists a subgroup $H$ of $G$ with $H\cong C_{n_r}^k$ and let $k'$ be maximal such that there exists a subgroup $H'$ of $G'$ with $H'\cong C_{n_r}^{k'}$. Then $2r\ge r+k\ge n_r-2$ and $2\mathsf r(G')\ge \mathsf r(G')+k'\ge n_r-2$ by Lemma \ref{le2}.5.

Assume to the contrary that $r\neq \mathsf r(G')$ and by symmetry, we can assume $r< \mathsf r(G')$. Thus $\frac{n_r}{2}-1\le r< \mathsf r(G')\le \lfloor\frac{n_r}{2}\rfloor$ which implies that $n_r$ is even, $n_r=2r+2$, and $\mathsf r(G')=r+1$. Since $\Delta^*(G)$ and $\Delta^*(G')$ are  intervals, it follows  by Lemma \ref{le2}.5 that $G\cong C_{n_r}^r$ and $k'\ge r$. Then $G$ is a subgroup of $G'$ which implies that $G\cong G'$ by Lemma \ref{4.1}.1, a contradiction.
\end{proof}

\noindent{\bf Proof of Theorem \ref{main}.} By definition of transfer Krull monoids, it follows  that $\mathcal L(G)=\mathcal L(H)=\mathcal L(H')=\mathcal L(G')$.

 1.
Let $r\le n-3$. If $\Delta^*(G)$ is not an interval, then \cite[Theorem 1.1 and Theorem 1.2]{Zh17a} implies that $G\cong G'$.

Suppose $\Delta^*(G)$ is an interval. Then Theorem \ref{th1}.5 implies that $n=\exp(G')$ and $r=\mathsf r(G')$. Therefore $G'$ is isomorphic to a subgroup of $G$ which implies that $G\cong G'$ by Lemma \ref{4.1}.1.

2. If $n=2$, then $G$ is an elementary $2$-group and the assertion follows by \cite[Theorem 7.3.3]{Ge-HK06a}. We assume $n\ge 3$ is a prime power. Then Theorem \ref{th1}.2 implies that  $r=\mathsf r(G')$ and $n=n_1'$. Therefore  $G$ is isomorphic to  a subgroup of $G'$ which implies that  $G\cong G'$ by Lemma \ref{4.1}.1.
\qed

\section{Concluding remarks and conjectures}

\bigskip

Throughout this section, let $G\cong C_n^r$ with $\mathsf D(G)\ge 4$ and  $n=p_1^{k_1}\ldots p_u^{k_u}$, where $u,k_1,\ldots,k_u\in \N$ and $p_1,\ldots, p_u$ are pair-wise distinct primes.

\begin{conjecture}
Suppose $r\ge n-1$.
Then the following hold:

\begin{enumerate}
\item[\bf C1.] $\mathsf K(G, r-1)=1+\frac{s(r-1)}{n}$ for some $s\in \N$ with $\gcd(s,n)=1$.

\item[\bf C2.] $\mathsf K(G, r-1)=1+(\sum_{i=1}^u\frac{p_i^{k_i}-1}{p_i^{k_i}})(r-1)$.
\end{enumerate}

\end{conjecture}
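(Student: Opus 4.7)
The plan is to prove C2 directly, then deduce C1 as an elementary consequence. The lower bound in C2 is already supplied by Lemma \ref{k}.1 (specialized to $n_1=n$), so only the matching upper bound is needed.

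For the passage C2 $\Rightarrow$ C1, write the conjectured formula as $\mathsf K(G,r-1)=1+s(r-1)/n$ with
\[
s \;=\; n\sum_{i=1}^u \frac{p_i^{k_i}-1}{p_i^{k_i}} \;=\; \sum_{i=1}^u \Bigl(\prod_{j\ne i}p_j^{k_j}\Bigr)(p_i^{k_i}-1) \,.
\]
Reducing modulo $p_i^{k_i}$, every summand with index $j\ne i$ contains $p_i^{k_i}$ as a factor (since $\prod_{\ell\ne j}p_\ell^{k_\ell}$ includes $\ell=i$), while the $i$-th summand reduces to $-\prod_{j\ne i}p_j^{k_j}$, coprime to $p_i$. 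Hence $\gcd(s,p_i)=1$ for every $i$ and so $\gcd(s,n)=1$.

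For the upper bound of C2, take any non-half-factorial $G_0\subset G$ with $(r-1)\t\min\Delta(G_0)$ and an atom $A\in\mathcal A(G_0)$ with $\mathsf k(A)=\mathsf K(G,r-1)$. Lemma \ref{k} already yields $\mathsf k(A)=1+s(r-1)/n$ for some $s\in\mathbb N$, so the task is to prove $s\le s_0:=n\sum_{i=1}^u(p_i^{k_i}-1)/p_i^{k_i}$. By Lemma \ref{2.4}.2, $\supp(A)$ is LCN with $\min\Delta(\supp(A))\le|\supp(A)|-2$; combined with $(r-1)\t\min\Delta(\supp(A))$ and $\max\Delta^*(G)=r-1$, this forces $\min\Delta(\supp(A))=r-1$ and $|\supp(A)|\ge r+1$. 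Choose a minimal non-half-factorial LCN-subset $G_1\subset\supp(A)$; by Lemma \ref{2.4}.3, $|G_1|=r+1$ and every $r$-element subset of $G_1$ spans a rank-$r$ subgroup of $G$. The first structural claim is that the maximum cross number is realized by an atom supported on such a $G_1$, of the form $A=g_0^{b_0}\prod_{j=1}^r g_j^{a_j}$ with $\{g_0,\ldots,g_r\}=G_1$ and $\langle g_1,\ldots,g_r\rangle$ of rank $r$.

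The main calculation then proceeds via the prime decomposition $G=\bigoplus_{i=1}^u G_i$ with $G_i\cong C_{p_i^{k_i}}^r$. The prime-power case in Lemma \ref{k}.2 gives $\mathsf K(G_i,r-1)=1+(p_i^{k_i}-1)(r-1)/p_i^{k_i}$ in each component, so the target inequality $\mathsf k(A)-1\le\sum_{i=1}^u(\mathsf K(G_i,r-1)-1)$ would yield C2 on summing. The hardest obstacle is bridging multiplicativity and additivity: for $g=\sum_i g_i\in G$ we have $1/\ord(g)=\prod_i 1/\ord(g_i)$, a multiplicative quantity over primes, whereas the target bound is the additive sum $\sum_i(p_i^{k_i}-1)/p_i^{k_i}$. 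I expect this gap to be resolved by exploiting the rank constraint $r\ge n-1$: in a rank-$r$ set of size $r+1$, one element (say $g_0$) is an explicit $\mathbb Z$-linear combination $g_0=\sum_j c_j g_j$ of the others, so its order factors transparently across primes via the $c_j$. Careful bookkeeping of these coefficients modulo each $p_i^{k_i}$, together with the extremal form of atoms in the $p$-group $G_i$ (where $\mathsf K(G_i)=\mathsf K^*(G_i)$ by Lemma \ref{le2.1}), should convert the multiplicative contribution of $g_0$ into the additive target. A successful proof will likely mirror and refine the construction of the atom $W_1$ in the proof of Lemma \ref{k}.1, showing that this is the essentially unique extremal configuration and hence no other atom can achieve a larger value of $s$.
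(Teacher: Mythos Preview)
This statement is a \emph{conjecture} in the paper, not a theorem; the paper gives no proof. The only remarks the paper makes are: (i) that C2 implies C1 (stated as ``easy to see''), (ii) that Lemma~\ref{k}.1 gives the form $\mathsf K(G,r-1)=1+s(r-1)/n$ for some $s\in\N$, and (iii) that C2 holds when $n$ is a prime power (by Lemma~\ref{k}.2). Your derivation of C1 from C2 via the explicit $s=\sum_i(\prod_{j\ne i}p_j^{k_j})(p_i^{k_i}-1)$ and the $\gcd$ computation is correct and makes (i) explicit; this part matches the paper.

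Your attempt at the upper bound for C2, however, is not a proof but a sketch with acknowledged gaps, and the gaps are real. Two concrete issues:
\begin{itemize}
\item You assert as a ``first structural claim'' that the maximum cross number is realized by an atom supported on a minimal non-half-factorial LCN-set $G_1$ of size $r+1$. But the extremal atom $A$ you start with has $\supp(A)\supset G_1$, possibly strictly; nothing you have written forces $\supp(A)=G_1$, and restricting to $G_1$ could lose cross number. This reduction is asserted, not argued.
\item The ``multiplicative-to-additive'' bridge is precisely the heart of the problem, and you concede it is unresolved. Projecting to the $p$-primary components $G_i$ does not directly help: the image of $A$ in $\mathcal F(G_i)$ need not be an atom (or even a zero-sum sequence in any useful sense), and there is no obvious reason the divisibility constraint $(r-1)\t\min\Delta(G_0)$ survives projection. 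The inequality $\mathsf k(A)-1\le\sum_i(\mathsf K(G_i,r-1)-1)$ is exactly the conjecture restated, and the hope that ``careful bookkeeping \ldots should convert'' the multiplicative contribution into the additive target is not an argument.
\end{itemize}
Since the paper itself leaves this open, it is no surprise that your outline does not close it; but as written, the proposal establishes only what the paper already records (the lower bound, the form $1+s(r-1)/n$, and the implication C2 $\Rightarrow$ C1), and does not advance the upper bound.
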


It is easy to see that ${\bf C2}$ implies ${\bf C1}$. By Lemma \ref{k}.1, we have $\mathsf K(G, r-1)=1+\frac{s(r-1)}{n}$ for some $s\in \N$ and  if $n$ is a prime power, then ${\bf C2}$ holds.

\begin{proposition}\label{4.4}Let $G'$ be a  finite abelian group with $\mathcal L(G')=\mathcal L(G)$.
 If $r\ge n-1$ and  {\bf C1} holds, then $G\cong G'$.

\end{proposition}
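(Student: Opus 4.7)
The plan is to establish $n_1' = n$ for the first invariant factor of $G' \cong C_{n_1'} \oplus \cdots \oplus C_{n_r'}$; once this is known, $G \cong C_n^r$ embeds into $G'$ as the $n$-torsion subgroup and Lemma \ref{4.1}.1 finishes the job. Part (2) of Theorem \ref{main} already covers the case that $n$ is a prime power, so the essential new content lies in $n$ not a prime power, which I shall assume below (the case $n=2$ is classical for elementary $2$-groups).

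First I would combine Theorem \ref{th1}.1 (giving $\mathsf r(G') = r$ and $\exp(G') \le r+1$) with Propositions \ref{P-rho}.2 and \ref{P-rho}.3 to conclude $\rho^*(G, r-1) = \mathsf K(G, r-1)$ and $\rho^*(G', r-1) = \mathsf K(G', r-1)$. The only potential obstruction is the anomalous case of Proposition \ref{P-rho}.3 in which $G' \cong C_{r+1}^r$ with $r+1$ a prime power and $\rho^*(G', r-1) < r$; this is excluded because Lemma \ref{k}.3, applied to $G$ with $n_1 = n$ not a prime power, gives $\mathsf K(G, r-1) > r$, whereas $\rho^*(G, r-1) = \rho^*(G', r-1)$ by Lemma \ref{4.1}.3. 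Hence $\mathsf K(G, r-1) = \mathsf K(G', r-1)$. Lemma \ref{k}.1 applied to both groups then writes
\[
\mathsf K(G, r-1) = 1 + \frac{s(r-1)}{n} \und \mathsf K(G', r-1) = 1 + \frac{s'(r-1)}{n_1'}
\]
for positive integers $s, s'$, and Conjecture C1 for $G$ supplies $\gcd(s, n) = 1$. Equating and cancelling forces $n \mid n_1'$; I write $n_1' = nm$ and $s' = sm$ for some $m \in \N$.

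The key move is to insert the subgroup $G'[n_1']$ between $C_n^r$ and $G'$. Because $n_1' \mid n_i'$ for every $i$, one has $G'[n_1'] \cong C_{nm}^r$, and since $nm = n_1' \le r+1$ we have $r \ge nm - 1$, so Conjecture C1 is applicable to $C_{nm}^r$ and yields $\mathsf K(C_{nm}^r, r-1) = 1 + s''(r-1)/(nm)$ with $\gcd(s'', nm) = 1$. Because the cross number of a zero-sum sequence depends only on the intrinsic orders of its terms, $\mathsf K(\cdot, r-1)$ is monotone along the chain $C_n^r \subset C_{nm}^r \subset G'$, so
\[
\mathsf K(C_n^r, r-1) \ \le \ \mathsf K(C_{nm}^r, r-1) \ \le \ \mathsf K(G', r-1) \ = \ \mathsf K(C_n^r, r-1),
\]
giving equality throughout. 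Comparing the expressions forces $s'' = sm$, whence $\gcd(s'', nm) = \gcd(sm, nm) = m \cdot \gcd(s, n) = m$. C1 for $C_{nm}^r$ now demands $m = 1$, so $n_1' = n$, and Lemma \ref{4.1}.1 concludes $G \cong G'$.

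The main obstacle I anticipate is the bookkeeping in the second paragraph: checking uniformly that Propositions \ref{P-rho}.2, \ref{P-rho}.3 and Lemma \ref{k}.1 are all in force across the range $r \ge n-1$, particularly at the boundary $r = n-1$, and that the $C_{r+1}^r$-anomaly on the $G'$ side is always excluded. The conceptual heart of the argument, however, is the three-tier subgroup chain $C_n^r \subset C_{nm}^r \subset G'$, which allows the coprimality clause of C1 to bootstrap from $G$ itself to a subgroup of $G'$ and thereby upgrade the divisibility $n \mid n_1'$ to equality.
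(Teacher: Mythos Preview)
Your argument through the second paragraph is correct and essentially matches the paper's proof: you reduce to $n$ not a prime power, obtain $\mathsf r(G')=r\ge\exp(G')-1$ from Theorem~\ref{th1}.1, identify $\rho^*(G,r-1)=\mathsf K(G,r-1)$ via Propositions~\ref{P-rho}.2--3, rule out the $C_{r+1}^r$ anomaly on the $G'$ side (the paper does this by the numerical contradiction $s/n=(r+1)/(r+2)\Rightarrow n=r+2$, you do it by the inequality $\mathsf K(G,r-1)>r$ from Lemma~\ref{k}.3; both work), and then use $\gcd(s,n)=1$ to extract $n\mid n_1'$.

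Where you diverge is that you then launch an entire third paragraph to upgrade $n\mid n_1'$ to $n_1'=n$, and this is both unnecessary and costly. It is unnecessary because $n\mid n_1'$ already suffices: since $r'=r$ and $n\mid n_1'\mid\cdots\mid n_r'$, the group $G\cong C_n^r$ embeds in $G'$ (indeed as the $n$-torsion, exactly as you say in your opening sentence), and Lemma~\ref{4.1}.1 finishes immediately. The paper's proof stops right here. It is costly because your bootstrap step invokes {\bf C1} for the auxiliary group $C_{nm}^r$, whereas the hypothesis of the proposition only grants {\bf C1} for $G=C_n^r$ itself; under the natural reading of the statement you are assuming strictly more than you are given. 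So drop the three-tier chain entirely: the ``conceptual heart'' you describe is not needed, and the divisibility $n\mid n_1'$ is already the end of the argument.
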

\begin{proof}
If $n$ is a prime power, then the assertion follows by Theorem \ref{main}. 

Suppose $n$ is not a prime power. Then $r\ge n-1\ge 5$.
Let $\mathcal L(G)=\mathcal L(G')$ and let $G'\cong C_{n_1'}\oplus \ldots \oplus C_{n_{r'}'}$ with $r', n_1',\ldots, n_{r'}'\in \N$ and  $1<n_1'\t \ldots \t n_{r'}'$. 
Then Theorem \ref{th1}.1 implies that $r=r'\ge n_{r'}'-1$. By Lemma \ref{k}.1 and  Proposition \ref{P-rho}(items 2. and 3.), we have  $\rho^*(G,r-1)=\mathsf K(G, r-1)=1+\frac{s(r-1)}{n}$ for some $s\in \N$. If $\rho^*(G', r-1)=\mathsf K(G', r-1)$, then  Lemma \ref{k}.1 implies that $\rho^*(G', r-1)=1+\frac{s'(r-1)}{n_1'}$ for some $s'\in \N$. Thus $\rho^*(G, r-1)=\rho^*(G', r-1)$ implies that $\frac{s}{n}=\frac{s'}{n_1'}$ and hence $n\t n_1'$ by $\gcd(s,n)=1$. Therefore $G$ is isomorphic to a subgroup of $G'$ which implies that  $G\cong G'$ by Lemma \ref{4.1}.1.

Suppose $\rho^*(G', r-1)\neq \mathsf K(G', r-1)$. Then Lemma \ref{3.2} implies that $\rho^*(G', r-1)>\mathsf K(G', r-1)$ and hence Proposition \ref{P-rho}(items 2. and 3.) implies that $n_r'=r+1$ and $\rho^*(G, r-1)=1+\frac{(r+1)(r-1)}{r+2}$. Thus $\rho^*(G, r-1)=\rho^*(G', r-1)$ implies that $\frac{s}{n}=\frac{r+1}{r+2}$. Since $\gcd(s, n)=1$, we have $n=r+2$, a contradiction.
\end{proof}

Recall that $\mathsf K^*(G')\le \mathsf K(G')$ for all finite abelian group $G'$ and there is known no group $G'$ with $\mathsf K^*(G')<\mathsf K(G')$.

\begin{proposition}Let $G'$ be a  finite abelian group with $\mathcal L(G')=\mathcal L(G)$.
   If $r\ge \max\{(u-1)n+1, n\}\ge 3$ and  $\mathsf K(G)=\mathsf K^*(G)$, then  $G\cong G'$.
\end{proposition}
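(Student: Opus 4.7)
The plan is to verify Conjecture~{\bf C1} for $G$ under the given hypotheses and then invoke Proposition~\ref{4.4}. Set $\alpha=\sum_{i=1}^u\tfrac{p_i^{k_i}-1}{p_i^{k_i}}$, so that $\mathsf K^*(G)=\tfrac{1}{n}+r\alpha$ and $n\alpha=\sum_{i=1}^u(n-n/p_i^{k_i})$ is a positive integer. By Lemma~\ref{k}.1, $\mathsf K(G,r-1)=1+\tfrac{s(r-1)}{n}$ for some integer $s\ge n\alpha$. The assumption $\mathsf K(G)=\mathsf K^*(G)$ combined with the trivial inequality $\mathsf K(G,r-1)\le \mathsf K(G)$ produces, after rearranging, the upper bound $s\le n\alpha+\tfrac{n\alpha-n+1}{r-1}$.

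The first main step is to show $s=n\alpha$. Rewriting the numerator as $n\alpha-n+1=(u-1)n-\sum_{i=1}^u n/p_i^{k_i}+1$, the case $u=1$ makes it zero and recovers $s=n-1$ as in Lemma~\ref{k}.2. For $u\ge 2$ one uses the trivial lower bound $\sum_{i=1}^u n/p_i^{k_i}\ge 2$ to bound the numerator by $(u-1)n-1$; combined with the hypothesis $r-1\ge (u-1)n$ this yields $s-n\alpha<1$, and since both are integers, $s=n\alpha$ follows.

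The second step is to verify $\gcd(n\alpha,n)=1$: for each prime divisor $p_j$ of $n$, every summand of $n\alpha=\sum_{i=1}^u(n-n/p_i^{k_i})$ with $i\ne j$ is divisible by $p_j^{k_j}$, while the $i=j$ summand reduces to $-n/p_j^{k_j}\pmod{p_j}$, which is nonzero because $n/p_j^{k_j}$ is coprime to $p_j$ by construction. Thus Conjecture~{\bf C1} holds for $G$ with $s=n\alpha$, and the hypothesis gives $r\ge n\ge n-1$, so Proposition~\ref{4.4} delivers $G\cong G'$. The main delicacy is the tightness of the rounding step: the bound $r\ge (u-1)n+1$ is exactly the threshold at which $s$ is forced to coincide with its lower bound $n\alpha$, and the assumption $\mathsf K(G)=\mathsf K^*(G)$ is precisely what converts the generic inequality $\mathsf K(G,r-1)\le \mathsf K(G)$ into an upper bound sharp enough to match Lemma~\ref{k}.1.
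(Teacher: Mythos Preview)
Your proof is correct and follows essentially the same route as the paper's: both sandwich $\mathsf K(G,r-1)$ between the lower bound $1+\tfrac{n\alpha(r-1)}{n}$ from Lemma~\ref{k}.1 and the upper bound coming from $\mathsf K(G,r-1)\le \mathsf K(G)=\mathsf K^*(G)$, use the hypothesis $r-1\ge (u-1)n$ to force $s=n\alpha$, check $\gcd(n\alpha,n)=1$, and finish via Proposition~\ref{4.4}. The only cosmetic differences are that the paper disposes of the case $u=1$ separately by a direct appeal to Theorem~\ref{main}.2, whereas you handle it uniformly by noting the numerator $n\alpha-n+1$ vanishes, and that you spell out the $\gcd$ verification which the paper asserts without detail.
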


\begin{proof}
 If $u=1$, then the assertion follows by Theorem \ref{main}.2. Suppose $u\ge 2$.
 Note  $\mathsf K(G)=\mathsf K^*(G)=\frac{1}{n}+r(\sum_{i=1}^u\frac{p_i^{k_i}-1}{p_i^{k_i}})=1+\frac{s}{n}(r-1)+\sum_{i=1}^u\frac{p_i^{k_i}-1}{p_i^{k_i}}-\frac{n-1}{n}$, where $s=n(\sum_{i=1}^u\frac{p_i^{k_i}-1}{p_i^{k_i}})$. Since $r\ge (\mathsf \omega(n)-1)n+1$, we have
$\sum_{i=1}^u\frac{p_i^{k_i}-1}{p_i^{k_i}}-\frac{n-1}{n}< u-1\le \frac{r-1}{n}$. It follows by Lemma \ref{k}.1 that $1+\frac{s}{n}(r-1)\le \mathsf K(G, r-1)=1+\frac{s'}{n}(r-1)\le 1+\frac{s+1}{n}(r-1)$, where $s'\in \N$. Therefore $1+\frac{s}{n}(r-1)=\mathsf K(G, r-1)$. Since $s=n(\sum_{i=1}^u\frac{p_i^{k_i}-1}{p_i^{k_i}})$, we have $\gcd(s, n)=1$. The assertion follows by Proposition \ref{4.4}.
\end{proof}


\bibliographystyle{amsplain}

\end{document}